\tikzset{w/.style={circle, draw,inner sep=1pt},b/.style={circle,draw,fill,inner sep=2pt}, s/.style={rectangle, draw,inner sep=3pt}}
\newcommand{\nocontentsline}[3]{}
\newcommand{\tocless}[2]{\bgroup\let\addcontentsline=\nocontentsline#1{#2}\egroup}
\newcommand{\bE}{\mathbb{E}}
\newcommand{\G}{\mathbf{G}}
\newcommand{\bL}{\mathbb{L}}
\newcommand{\bP}{\mathbb{P}}
\newcommand{\s}{\mathbf{s}}
\newcommand{\bT}{\mathbb{T}}
\renewcommand{\t}{\mathbf{t}}
\newcommand{\Z}{\mathbb{Z}}
\newcommand{\bZ}{\mathbf{Z}}
\newcommand{\cA}{\mathcal{A}}
\newcommand{\cB}{\mathcal{B}}
\newcommand{\cC}{\mathcal{C}}
\newcommand{\cI}{\mathcal{I}}
\newcommand{\cL}{\mathcal{L}}
\newcommand{\cM}{\mathcal{M}}
\newcommand{\cO}{\mathcal{O}}
\newcommand{\cP}{\mathcal{P}}
\newcommand{\cV}{\mathcal{V}}
\newcommand{\C}{\mathrm{C}}
\newcommand{\rH}{\mathrm{H}}
\newcommand{\rN}{\mathrm{N}}
\newcommand{\T}{\mathrm{T}}
\newcommand{\U}{\mathrm{U}}
\newcommand{\rZ}{\mathrm{Z}}
\newcommand{\g}{\mathfrak{g}}
\newcommand{\Arr}{\mathrm{Arr}}
\newcommand{\bu}{\mathbf{1}}
\newcommand{\Br}{\mathrm{Br}}
\newcommand{\coeq}{\mathrm{coeq}}
\newcommand{\Cois}{\mathrm{Cois}}
\newcommand{\Conv}{\mathrm{Conv}}
\newcommand{\cu}{\mathrm{cu}}
\newcommand{\Def}{\mathrm{Def}}
\newcommand{\Der}{\mathrm{Der}}
\newcommand{\dg}{\mathrm{dg}}
\newcommand{\bDR}{\mathbf{DR}}
\newcommand{\End}{\mathrm{End}}
\newcommand{\Free}{\mathrm{Free}}
\newcommand{\Harr}{\mathrm{Harr}}
\newcommand{\Hom}{\mathrm{Hom}}
\newcommand{\id}{\mathrm{id}}
\newcommand{\Isom}{\mathrm{Isom}}
\newcommand{\Map}{\mathrm{Map}}
\newcommand{\MC}{\mathrm{MC}}
\renewcommand{\nu}{\mathrm{nu}}
\newcommand{\Op}{\mathrm{Op}}
\newcommand{\opp}{\mathrm{opp}}
\newcommand{\Pois}{\mathrm{Pois}}
\newcommand{\Pol}{\mathrm{Pol}}
\newcommand{\bPol}{\mathbf{Pol}}
\newcommand{\SC}{\mathrm{SC}}
\newcommand{\sgn}{\mathrm{sgn}}
\newcommand{\Sym}{\mathrm{Sym}}
\newcommand{\Tree}{\mathrm{Tree}}
\newcommand{\triv}{\mathrm{triv}}
\newcommand{\Ass}{\mathrm{Ass}}
\newcommand{\coAss}{\mathrm{coAss}}
\newcommand{\Comm}{\mathrm{Comm}}
\newcommand{\coComm}{\mathrm{coComm}}
\newcommand{\coLie}{\mathrm{coLie}}
\newcommand{\Lie}{\mathrm{Lie}}
\newcommand{\preLie}{\mathrm{preLie}}
\newcommand{\coP}{\mathrm{co}\mathbb{P}}
\newcommand{\alg}{\mathrm{Alg}}
\newcommand{\balg}{\mathbf{Alg}}
\newcommand{\CAlg}{\mathrm{CAlg}}
\newcommand{\bCAlg}{\mathbf{CAlg}}
\newcommand{\comod}{\mathrm{CoMod}}
\renewcommand{\mod}{\mathrm{Mod}}
\newcommand{\blmod}{\mathbf{LMod}}
\newcommand{\brmod}{\mathbf{RMod}}
\newcommand{\bimod}[2]{{}_{#1}\mathbf{BMod}_{#2}}
\DeclareMathOperator{\colim}{colim}
\newcommand{\iHom}{\underline{\mathrm{Hom}}}
\renewcommand{\d}{\mathrm{d}}
\newcommand{\ddr}{\mathrm{d}_{\mathrm{dR}}}
\newtheorem{thm}{Theorem}[section]
\newtheorem*{theorem}{Theorem}
\newtheorem{prop}[thm]{Proposition}
\newtheorem{lm}[thm]{Lemma}
\newtheorem{cor}[thm]{Corollary}
\theoremstyle{definition}
\newtheorem{defn}[thm]{Definition}
\theoremstyle{remark}
\newtheorem{remark}[thm]{Remark}
\begin{document}
\title{Derived coisotropic structures I: affine case}

\address{Dipartimento di Matematica, Universit\`a di Milano, Via Cesare Saldini 50, 20133 Milan, Italy}
\email{valerio.melani@outlook.com}
\author{Valerio Melani}

\address{Department of Mathematics, University of Geneva, 2-4 rue du Lievre, 1211 Geneva, Switzerland}
\curraddr{Institut f\"{u}r Mathematik, Winterthurerstrasse 190, 8057 Z\"{u}rich, Switzerland}
\email{pavel.safronov@math.uzh.ch}
\author{Pavel Safronov}

\begin{abstract}
We define and study coisotropic structures on morphisms of commutative dg algebras in the context of shifted Poisson geometry, i.e. $\bP_n$-algebras. Roughly speaking, a coisotropic morphism is given by a $\bP_{n+1}$-algebra acting on a $\bP_n$-algebra. One of our main results is an identification of the space of such coisotropic structures with the space of Maurer--Cartan elements in a certain dg Lie algebra of relative polyvector fields. To achieve this goal, we construct a cofibrant replacement of the operad controlling coisotropic morphisms by analogy with the Swiss-cheese operad which can be of independent interest. Finally, we show that morphisms of shifted Poisson algebras are identified with coisotropic structures on their graph.
\end{abstract}
\maketitle

\tableofcontents

\addtocontents{toc}{\protect\setcounter{tocdepth}{1}}

\section*{Introduction}

This paper is a continuation of the work of Calaque, Pantev, To\"{e}n, Vaqui\'{e} and Vezzosi \cite{CPTVV} on shifted Poisson structures on derived stacks. In this paper we introduce shifted coisotropic structures on morphisms of commutative dg algebras and show that they possess certain expected properties:
\begin{itemize}
\item Suppose $A$ is a $\bP_{n+1}$-algebra, i.e. a commutative dg algebra with a Poisson bracket of degree $-n$. Then the identity morphism $A\rightarrow A$ carries a unique $n$-shifted coisotropic structure.

\item A morphism of commutative algebras $A\rightarrow B$ is compatible with $n$-shifted Poisson structures iff its graph $A\otimes B\rightarrow B$ has an $n$-shifted coisotropic structure.
\end{itemize}

We generalize these results to general derived Artin stacks in \cite{MS2}.

\subsection*{Classical setting}

Let us recall two ways of defining Poisson structures and coisotropic embeddings in the classical setting. Suppose $X$ is a smooth scheme over a characteristic zero field $k$. Then one has the following equivalent definitions of a Poisson structure on $X$:
\begin{enumerate}
\item The structure sheaf $\cO_X$ of $X$ is a sheaf of $k$-linear Poisson algebras where the multiplication coincides with the original commutative multiplication on $\cO_X$.

\item $X$ carries a bivector $\pi_X\in\rH^0(X, \wedge^2 \T_X)$ such that $[\pi_X, \pi_X] = 0$.
\end{enumerate}

The equivalence of the two definitions is clear: a bivector $\pi_X$ is the same as an antisymmetric biderivation $\cO_X\otimes_k \cO_X\rightarrow \cO_X$; an easy computation shows that the equation $[\pi_X, \pi_X] = 0$ is then equivalent to the Jacobi identity for the corresponding biderivation.

Now suppose $X$ is a scheme carrying a Poisson structure in one of the senses above and consider a smooth closed subscheme $i\colon L\hookrightarrow X$. Then one has the following equivalent definitions of $L$ being coisotropic:
\begin{enumerate}
\item The ideal sheaf $\cI_L$ defining $L$ is a coisotropic ideal, i.e. it is closed under the Poisson bracket on $\cO_X$.

\item Let $\rN_{L/X}$ be the normal bundle of $L$. The composite
\[\rN^*_{L/X}\longrightarrow i^* \T^*_X\stackrel{\pi_X}\longrightarrow i^*\T_X\longrightarrow \rN_{L/X}\]
is zero.
\end{enumerate}

The equivalence of the two definitions is well-known and follows from the identification $\rN^*_{L/X}\cong \cI_L / \cI_L^2$.

\subsection*{Shifted Poisson algebras}

Now suppose $A$ is a commutative dg algebra. To generalize the first definition of a Poisson structure we simply replace the Poisson bracket of degree $0$ by one of degree $-n$.

To generalize the second definition, consider
\[\bPol(A, n) = \Hom_A(\Sym_A(\bL_A[n+1]), A),\]
the algebra of $n$-shifted polyvector fields, where $\bL_A$ is the cotangent complex of $A$. It is equipped with a commutative multiplication, a Poisson bracket of degree $-n-1$ generalizing the Schouten bracket and an additional grading such that $\bL_A$ has weight $-1$, i.e. $\bPol(A, n)$ is a graded $\bP_{n+2}$-algebra. A bivector of the correct degree is a morphism of graded complexes $k(2)[-1]\rightarrow \bPol(A, n)[n+1]$ and the Jacobi identity can be succinctly summarized by assuming that the morphism $k(2)[-1]\rightarrow \bPol(A, n)[n+1]$ is a map of graded dg Lie algebras.

Therefore, we can give the following two definitions of an $n$-shifted Poisson structure on a cdga $A$:
\begin{enumerate}
\item $A$ carries a $\bP_{n+1}$-algebra structure compatible with the original commutative structure on $A$.

\item One has a morphism of graded dg Lie algebras
\[k(2)[-1]\longrightarrow \bPol(A, n)[n+1].\]
\end{enumerate}

In fact, both definitions give a whole \emph{space} $\Pois(A, n)$ of $n$-shifted Poisson structures. An equivalence of the two spaces is the main theorem of \cite{Me} (see also Theorem \ref{thm:poissonpolyvectors}).

\subsection*{Shifted coisotropic structures}

Now suppose $f\colon A\rightarrow B$ is a morphism of commutative algebras. One can generalize the classical definitions of coisotropic submanifolds to $n$-shifted coisotropic structures as follows.

To generalize the first definition, let us consider a certain colored dg operad $\bP_{[n+1, n]}$ introduced in \cite{Sa1}. Its algebras are triples $(A, B, F)$ consisting of a $\bP_{n+1}$-algebra $A$, a $\bP_n$-algebra $B$ and a morphism of $\bP_{n+1}$-algebras $F\colon A\rightarrow \rZ(B)$. Here $\rZ(B)$ is the so-called Poisson center and is given by twisting the differenital of $\bPol(B, n-1)$ by $[\pi_B, -]$. In particular, we obtain a morphism of commutative algebras $A\rightarrow \rZ(B)\rightarrow B$. To establish a relation between this definition and the classical definition of coisotropic ideals, we show in Section \ref{sect:relpoistopois} that the homotopy fiber $\U(A, B)$ of the morphism $A\rightarrow B$ carries a canonical structure of non-unital $\bP_{n+1}$-algebra such that the projection $\U(A, B)\rightarrow A$ is a morphism of $\bP_{n+1}$-algebras.

To generalize the second definition, consider the algebra of $n$-shifted relative polyvectors
\[\bPol(B/A, n-1) = \Hom_B(\Sym_B(\bL_{B/A}[n-1]), B).\]
It is equipped with a graded $\bP_{n+1}$-algebra structure as before. There is a natural morphism of commutative algebras
\[\bPol(A, n)\longrightarrow \bPol(B/A, n-1)\]
induced from the morphism $\bL_{B/A}\rightarrow f^*\bL_A[1]$ of cotangent complexes. In Section \ref{sect:relativepoly} we show that the pair $(\bPol(A, n), \bPol(B/A, n-1))$ is naturally a graded $\bP_{[n+2, n+1]}$-algebra and thus we obtain a graded non-unital $\bP_{n+2}$-algebra $\bPol(f, n) = \U(\bPol(A, n), \bPol(B/A, n-1))$. Thus, we can consider morphisms of graded dg Lie algebras
\[k(2)[-1]\longrightarrow \bPol(f, n)[n+1].\]
To see that this indeed generalizes the classical definition, consider the case when $f$ is surjective which corresponds to a closed embedding of a subscheme. Then the morphism $\bPol(A, n)\rightarrow \bPol(B/A, n-1)$ is surjective as well and by Proposition \ref{prop:strictrelpolyvectors} we can identify $\bPol(f, n)$ with its strict kernel. Thus, bivectors in $\bPol(f, n)$ are bivectors in $\bPol(A, n)$ which vanish when restricted to the normal bundle of $B$.

Therefore, we can make the following two definitions of an $n$-shifted coisotropic structure on a cdga morphism $f\colon A\rightarrow B$:
\begin{enumerate}
\item One has a $\bP_{[n+1, n]}$-algebra structure on $(A, B)$ such that the induced morphism $A\rightarrow \rZ(B)\rightarrow B$ of commutative algebras is homotopic to $f$.

\item One has a morphism of graded dg Lie algebras
\[k(2)[-1]\longrightarrow \bPol(f, n)[n+1].\]
\end{enumerate}

Let us denote the space of $n$-shifted coisotropic structures in the first sense by $\Cois(f, n)$. The following theorem is the first main result of the paper (Theorem \ref{thm:coisotropicpolyvectors}):
\begin{theorem}
Let $f\colon A\rightarrow B$ be a morphism of commutative dg algebras. Then one has an equivalence of spaces
\[\Cois(f, n)\cong \Map_{\alg_{Lie}^{gr}}(k(2)[-1], \bPol(f, n)[n+1]).\]
\end{theorem}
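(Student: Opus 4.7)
The strategy is to adapt the Poisson--polyvector equivalence of Theorem~\ref{thm:poissonpolyvectors} to the coloured/coisotropic setting, carried out in three steps; the technical heart lies in identifying a coloured deformation complex with $\bPol(f,n)[n+1]$ as a graded dg Lie algebra.

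First I would give an operadic model for $\Cois(f,n)$. Using the cofibrant resolution of the coloured operad $\bP_{[n+1,n]}$ built earlier in the paper (the analogue of the Swiss-cheese operad), the space of $\bP_{[n+1,n]}$-algebra structures on the pair $(A,B)$ extending the given commutative multiplications and such that the induced map $A \to \rZ(B) \to B$ is homotopic to $f$ is modelled by Maurer--Cartan elements in a graded convolution dg Lie algebra $\Conv(\cC, \End(A,B))$, where $\cC$ is the corresponding Koszul dual cooperad and where the differential has been twisted by the existing commutative data and by $f$. The weight grading comes from the cooperad, and in this model $\Cois(f,n)$ is the mapping space from $k(2)[-1]$ into $\Conv(\cC, \End(A,B))$ in graded dg Lie algebras.

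The central step is to produce an equivalence of graded dg Lie algebras between this convolution algebra and $\bPol(f,n)[n+1]$. The pure $A$-coloured part reproduces $\bPol(A,n)[n+1]$, just as in Theorem~\ref{thm:poissonpolyvectors}; the pure $B$-coloured part reproduces $\bPol(B,n-1)[n]$ with differential twisted by the Poisson bracket of $B$, i.e. a shift of the Poisson centre $\rZ(B)$; and the mixed part, consisting of multilinear operations taking inputs in both $A$ and $B$ and landing in $B$, matches $\bPol(B/A, n-1)$ via the $A$-algebra structure on $B$ induced by $f$ together with the universal property of the relative cotangent complex $\bL_{B/A}$. The graded $\bP_{[n+2,n+1]}$-structure on $(\bPol(A,n), \bPol(B/A,n-1))$ produced in Section~\ref{sect:relativepoly} is designed to match the coloured bracket coming out of the convolution algebra, and the definition $\bPol(f,n) = \U(\bPol(A,n), \bPol(B/A,n-1))$ from Section~\ref{sect:relpoistopois} then translates Maurer--Cartan elements into morphisms of graded dg Lie algebras $k(2)[-1] \to \bPol(f,n)[n+1]$, as desired.

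The main obstacle is matching the \emph{mixed} component of the Lie bracket: the self-brackets on the $A$- and $B$-parts are the expected Schouten-type brackets and follow formally from Theorem~\ref{thm:poissonpolyvectors}, but the brackets encoding how an operation with outputs in $B$ can be plugged into an $A$-input via $f$ simultaneously involve the Hamiltonian action of $A$ on $B$ and the connecting map $\bL_{B/A} \to f^*\bL_A[1]$. Unpacking these on the side of the Koszul dual coloured cooperad and comparing them with the brackets on $(\bPol(A,n), \bPol(B/A,n-1))$ is precisely where the cofibrant resolution of $\bP_{[n+1,n]}$ constructed earlier in the paper is indispensable; and the passage to the homotopy fibre $\U$ is exactly what absorbs the strict-versus-homotopical distinction encoded in the requirement that $A \to \rZ(B) \to B$ only be homotopic, rather than equal, to $f$.
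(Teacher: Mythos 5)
Your proposal follows essentially the same route as the paper: model $\Cois(f,n)$ via the Swiss-cheese cofibrant resolution of $\bP_{[n+1,n]}$, identify the resulting mapping space with Maurer--Cartan elements in a coloured convolution $L_\infty$-algebra twisted by the commutative structures and by $f$ (this is Lemma \ref{lm:MCfiber} applied to the cylinder convolution algebra of Proposition \ref{prop:SCcylinderla}), match the fibre with $\bPol^{\geq 2}(f,n)[n+1]$ including the mixed brackets via the relative brace construction of Proposition \ref{prop:bracemorphisms}, and conclude with Proposition \ref{prop:MCgradeddglie}. The only bookkeeping slip is that $\bPol(B/A,n-1)[n]$ accounts for the pure $B$-coloured operations \emph{together with} the mixed ones (three of the four summands in the fibre), rather than only the mixed part, but this does not affect the argument.
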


Let us mention that our first definition was based on \cite[Definition 3.4.4]{CPTVV}, which is given as follows. By a theorem proved independently by Rozenblyum and the second author (see \cite[Theorem 2.22]{Sa2}) there is an equivalence of $\infty$-categories
\begin{equation}
\balg_{\bP_{n+1}}\stackrel{\sim}\longrightarrow \balg(\balg_{\bP_n})
\label{eq:Poissonadditivity}
\end{equation}
beween $\bP_{n+1}$-algebras and associative algebra objects in $\bP_n$-algebras. Thus, one can define an $n$-shifted coisotropic structure in terms of an associative action of the $\bP_{n+1}$-algebra $A$ on the $\bP_n$-algebra $B$. An equivalence of the two definitions is shown in \cite[Corollary 3.8]{Sa2}.

\subsection*{Braces and the Swiss-cheese operad}

Let us briefly explain some intermediate constructions that go into the proof of Theorem \ref{thm:coisotropicpolyvectors} which can be of independent interest.

To prove the claim, we have to construct a cofibrant replacement of the colored operad $\bP_{[n+1, n]}$, which is done as follows. First of all, we have to replace the center $\rZ(B)$ by a homotopy center $\bZ(B)$ which is defined as an operadic convolution algebra associated to the cooperad $\coP_n$ Koszul dual to $\bP_n$. A priori the homotopy center $\bZ(B)$ associated to some cooperad $\cC$ is merely a Lie algebra, but as shown by Calaque and Willwacher \cite{CW}, it carries a canonical action of the so-called brace operad $\Br_{\cC}$ if $\cC$ is a Hopf cooperad, i.e. a cooperad in commutative algebras, which is the case for $\cC=\coP_n$. Moreover, they have constructed a morphism from a resolution of $\bP_{n+1}$ to $\Br_{\coP_n}$. Thus, if $B$ is a $\bP_n$-algebra, $\bZ(B)$ becomes a homotopy $\bP_{n+1}$-algebra.

We can now generalize the problem of construction of a cofibrant replacement for $\bP_{[n+1, n]}$ as follows. Let $\cC_1$ be a dg cooperad and $\cC_2$ a Hopf dg cooperad together with a morphism $\Omega(\cC_1)\rightarrow \Br_{\cC_2}$. In Section \ref{sect:SCalgebras} we construct a certain semi-free colored operad $\SC(\cC_1, \cC_2)$ whose algebras are given by triples $(A, B, F)$, where $A$ is an $\Omega(\cC_1)$-algebra, $B$ is an $\Omega(\cC_2)$-algebra and $F\colon A\rightarrow \bZ(B)$ is an $\infty$-morphism of $\Omega(\cC_1)$-algebras. In the case of Poisson algebras we get a cofibrant colored operad $\widetilde{\bP}_{[n+1, n]}$ whose algebras are triples $(A, B, F)$ consisting of a homotopy $\bP_{n+1}$-algebra $A$, homotopy $\bP_n$-algebra $B$ and an $\infty$-morphism of homotopy $\bP_{n+1}$-algebras $F\colon A\rightarrow \bZ(B)$. Moreover, by Proposition \ref{prop:relPnresolution} the natural morphism $\widetilde{\bP}_{[n+1, n]}\rightarrow \bP_{[n+1, n]}$ is a weak equivalence.

Let us mention why we call $\SC(\cC_1, \cC_2)$ the Swiss-cheese construction. Consider the colored operad $\SC = \SC(\mathrm{co}\bE_2, \coAss)$. Its algebras are triples $(A, B, F)$ consisting of an $\bE_2$-algebra $A$, an $\bE_1$-algebra $B$ and a morphism of $\bE_2$-algebras $A\rightarrow \mathrm{HH}^\bullet(B, B)$, where $\mathrm{HH}^\bullet(B, B)$ is the Hochschild cochain complex endowed with its natural $\bE_2$-structure. It is thus expected that $\SC$ is a model for the chain operad on the two-dimensional Swiss-cheese operad of Voronov (we refer to \cite{Th} for a similar statement in the topological setting).

We also generalize the construction of Calaque--Willwacher of the brace operad to a relative setting of a morphism of $\Omega\cC$-algebras $A\rightarrow B$. In this case one can consider a triple of a convolution algebra of $A$, a convolution algebra of $B$ and a relative convolution algebra. For instance, one can interpret $\bPol(B/A, n-1)$ as a relative convolution algebra of the morphism of graded $\bP_{n+1}$-algebras $A\rightarrow B\rightarrow \bPol(B, n-1)$, where both $A$ and $B$ are equipped with the zero Poisson bracket. Therefore, the construction of the graded $\bP_{[n+2, n+1]}$-algebra on the pair $(\bPol(A, n), \bPol(B/A, n-1))$ follows from the action of the convolution algebra of $A$ on the relative convolution algebra constructed in Proposition \ref{prop:bracemorphisms}.

\subsection*{Poisson morphisms}

Let us finish the introduction by stating the last main result of the paper. Recall that a morphism of Poisson manifolds $Y\rightarrow X$ is Poisson iff its graph $Y\rightarrow \overline{Y}\times X$ is coisotropic, where $\overline{Y}$ is the same manifold with the opposite Poisson structure. We show that a similar statement holds in the derived context as well. Namely, we have the following statement (see Theorem \ref{thm:graphcoisotropic}).

\begin{theorem}
Let $A,B$ be $\bP_{n+1}$-algebras, and let $f\colon A\rightarrow B$ be a morphism of commutative algebras. Then the space $\Pois(f, n)$ of lifts of $f$ to a morphism of $\bP_{n+1}$-algebras is equivalent to the space of 
coisotropic structures on $A \otimes \overline{B} \to B$, where $\overline{B}$ is the same algebra $B$ with the opposite $\bP_{n+1}$-structure.
\end{theorem}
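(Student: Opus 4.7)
The approach is to apply Theorem~\ref{thm:coisotropicpolyvectors} to the commutative morphism $g\colon A\otimes\overline{B}\to B$ given by $a\otimes b\mapsto f(a)b$, which gives an equivalence
\[\Cois(g, n)\simeq \Map_{\alg_{Lie}^{gr}}(k(2)[-1], \bPol(g, n)[n+1]),\]
and then to match the right hand side with the analogous description of $\Pois(f, n)$ by a direct computation of $\bPol(g, n)$.

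To compute $\bPol(g, n) = \U(\bPol(A\otimes B, n), \bPol(B/A\otimes B, n-1))$, I would first identify the relative cotangent complex. Using $\bL_{A\otimes B}\simeq (\bL_A\otimes B)\oplus (A\otimes\bL_B)$ and base-changing to $B$ along $g$, one obtains $g^*\bL_{A\otimes B}\simeq f^*\bL_A\oplus \bL_B$ as $B$-modules; the canonical map to $\bL_B$ sends $(\xi,\eta)\mapsto \bL_f(\xi)+\eta$, whose cofibre is $f^*\bL_A[1]$. Hence
\[\bPol(B/A\otimes B, n-1)\simeq \Hom_B(\Sym_B(f^*\bL_A[n]), B),\]
and under the isomorphism $\bPol(A\otimes B, n)\simeq \bPol(A, n)\otimes \bPol(B, n)$ of graded $\bP_{n+2}$-algebras, the restriction morphism kills the $\bPol(B, n)^{>0}$ summands since $\bL_B$ maps to zero in the cofibre. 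This produces an explicit model of the non-unital graded $\bP_{n+2}$-algebra $\bPol(g, n)$.

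Next, I would unpack the weight-two part of a graded Maurer--Cartan element in $\bPol(g, n)[n+1]$. The relevant summand decomposes into contributions from $\bPol(A, n)^{(2)}$, $\bPol(B, n)^{(2)}$, a mixed piece $\bPol(A, n)^{(1)}\otimes \bPol(B, n)^{(1)}$, and a fibre correction from the relative polyvectors. A Maurer--Cartan element thus records a quadruple $(\pi_A, \pi_B, \alpha, \beta)$ consisting of bivectors $\pi_A$ on $A$ and $\pi_B$ on $\overline{B}$ (the latter being the negative of a bivector on $B$), together with a mixed datum $(\alpha, \beta)$. Projecting the Maurer--Cartan equation onto its three components gives the Jacobi identities for $\pi_A$ and $\pi_B$ together with a compatibility equation that is precisely the condition for $(\alpha, \beta)$ to supply a $\bP_{n+1}$-morphism lift of $f$ with respect to the two bivectors. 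Fixing $(\pi_A, \pi_B)$ to the given Poisson structures identifies the corresponding fibre with $\Pois(f, n)$; the matching can be made precise by writing $\Pois(f, n)$ itself as a mapping space in $\alg_{Lie}^{gr}$ built from the same ingredients and comparing the two dg Lie algebras directly.

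The main obstacle is the careful tracking of signs and shifts. The involution $B\mapsto \overline{B}$ flips the sign of the Poisson bivector, the suspension $[1]$ in $\bL_{B/A\otimes B}\simeq f^*\bL_A[1]$ reshuffles the relative polyvectors, and these combine with the global shift $[n+1]$ of the Maurer--Cartan equation. The delicate point is to verify that the mixed component $\bPol(A, n)^{(1)}\otimes\bPol(B, n)^{(1)}$, together with the homotopy fibre correction, lands in precisely the right degree and weight to encode a Poisson morphism structure on $f$ rather than some other relative bivector datum, and that all the higher coherences packaged into the fibre contribution correspond to the coherences needed for a $\bP_{n+1}$-morphism lift.
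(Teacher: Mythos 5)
Your route is genuinely different from the paper's. The paper does not compute any polyvector algebras here: it uses Poisson additivity (Theorems \ref{thm:poissonadditivity} and \ref{thm:relpoissonadditivity}) to translate everything into statements about associative algebras, bimodules and modules in $\balg_{\bP_n}(\cM)$, and then the theorem drops out of two purely categorical facts — the pullback square identifying $\bimod{}{}(\cC)$ over $\blmod(\cC)$ via $A\otimes B^{\opp}$, and Lemma \ref{lm:morphismbimodule} identifying $\Arr(\balg(\cC))$ with bimodule lifts of the canonical right module $B_B$ — together with Proposition \ref{prop:idcoisotropic}. Your preliminary computations are fine as far as they go: $\bL_{B/A\otimes B}\simeq f^*\bL_A[1]$ and hence $\bPol(B/A\otimes B, n-1)\simeq \Hom_B(\Sym_B(f^*\bL_A[n]), B)$ are correct, and using Theorem \ref{thm:coisotropicpolyvectors} to express $\Cois(g,n)$ as an MC space is legitimate.

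The gap is that the final ``matching'' step, which is where the entire content of the theorem lives, is asserted rather than argued. First, the space of $\bP_{n+1}$-morphism lifts of $f$ is not presented in the paper (or in your proposal) as the MC space of an explicit graded dg Lie algebra with a known section of the projection onto $\bPol(A,n)\times\bPol(B,n)$; to apply Lemma \ref{lm:MCfiber} on both sides you need such sections, and the polyvectors of $f$ viewed as a commutative algebra in $\Arr(\cM)$ form a homotopy limit for which the required splitting is not obvious. Second, a weight-by-weight degree count shows the two sides are not identified termwise: in weight $-k$ the coisotropic fibre contributes $\Hom_B(\Sym^k_B(f^*\bL_A[n]),B)[n]$ while the corresponding correction term in the arrow-category polyvectors sits in $\Hom_B(\Sym^k_B(f^*\bL_A[n+1]),B)[n]$, so any comparison must be a genuine quasi-isomorphism of twisted $L_\infty$-algebras involving cancellations, not the ``direct comparison of the two dg Lie algebras'' you describe. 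Third, your weight-two analysis treats the datum as a quadruple $(\pi_A,\pi_B,\alpha,\beta)$ satisfying ``a compatibility equation,'' but a $\bP_{n+1}$-morphism lift in the $\infty$-categorical sense carries an infinite tower of higher coherences, and identifying these with the higher-weight components of the twisted MC equation is exactly the point you defer to ``careful tracking of signs and shifts.'' As written, the proposal reduces the theorem to an unproved claim of the same difficulty; to complete it you would either need to carry out this $L_\infty$ comparison in full, or fall back on the additivity argument the paper actually uses.
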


Here by a decomposable $n$-shifted coisotropic structure on $A\otimes B$ we mean one whose underlying $n$-shifted Poisson structure on $A\otimes B$ is obtained by a sum of $n$-shifted Poisson structures on $A$ and $B$ separately.

The main idea of the proof is to use the Poisson additivity functor \eqref{eq:Poissonadditivity} to reduce the statement to the following basic algebraic fact (see Lemma \ref{lm:morphismbimodule}). Let $A$ and $B$ be associative algebras and consider the right $B$-module $B_B$. Then lifts of $B_B$ to an $(A, B)$-bimodule are equivalent to specifying a morphism of associative algebras $A\rightarrow B$ since an $(A, B)$-bimodule ${}_AB_B$ gives rise to a morphism of associative algebras $A\rightarrow \End_{\brmod_B}(B)\cong B$.

\subsection*{Acknowledgements}

We would like to thank D. Calaque, G. Ginot, M. Porta, B. To\"{e}n and G. Vezzosi for many interesting and stimulating discussions. The work of P.S. was supported by the EPSRC grant EP/I033343/1. We also thank the anonymous referee for his/her useful comments.

\section{Basic definitions}
\addtocontents{toc}{\protect\setcounter{tocdepth}{2}}

\subsection{Model categories}
\label{sect:modelcats}

Let $k$ be a field of characteristic zero and let $\dg_k$ be the symmetric monoidal category of unbounded chain complexes of $k$-vector spaces. Moreover, it carries a projective model structure, whose weak equivalences are the quasi-isomorphisms and whose fibrations are degree-wise surjections. Together these structures are compatible in the sense that $\dg_k$ forms a symmetric monoidal model category.

Throughout the paper we are going to work in the setting of a general model category $M$ following \cite[Section 1.1]{CPTVV}, which the reader can safely assume to be the category of chain complexes $\dg_k$ or the category of diagrams in it. In this section we will list the necessary assumptions on the model category $M$.

Consider $M$, a closed symmetric monoidal combinatorial model category. In addiction to this, suppose $M$ is enriched over $\dg_k$, or, equivalently, $M$ is a symmetric monoidal $\dg_k$-model algebra in the sense of Hovey (see \cite[Definition 4.2.20]{Ho}). It is shown in \cite[Appendix A.1.1]{CPTVV} that such an $M$ is a stable model category. Furthermore, we will make the following assumptions on $M$:
\begin{enumerate}
\item The unit $\bu_M$ of $M$ is cofibrant.

\item Suppose $f\colon A \rightarrow B$ is a cofibration and $C$ an object of $M$. Then for any morphism $A\otimes C \rightarrow D$ the strict pushout of the diagram
\[
\xymatrix{
A\otimes C \ar[r] \ar[d] & D \\
B\otimes C &
}
\]
is also a model for the homotopy pushout.

\item If $A$ is a cofibrant object, then the functor $A\otimes -\colon M\rightarrow M$ preserves weak equivalences.

\item $M$ is a tractable model category.

\item Finite products and filtered colimits preserve weak equivalences.
\end{enumerate}

We denote by $\cM$ the localization of the model category $M$ which is a symmetric monoidal dg category.

\subsection{Graded mixed objects}
\label{sect:gradedmixed}

Most of the results in this section can be found in \cite[Section 1.1]{CPTVV}, so we will be brief; the reader is invited to consult the reference for details.

Let $V$ be a complex and $\epsilon$ a square-zero endomorphism of $V$ of degree 1 such that $[\d, \epsilon] = 0$. We can then twist the differential on $V$ by $\epsilon$, i.e. consider the same underlying graded vector space equipped with the differential $\d+\epsilon$. This construction does not preserve quasi-isomorphisms and so does not make sense in the underlying $\infty$-category. We use the realization functors of graded mixed objects as replacements for this construction: these preserve weak equivalences and make sense for any $\cM$.

Let us recall the symmetric monoidal model categories $M^{gr}$ and $M^{gr, \epsilon}$ of graded and graded mixed objects of $M$ respectively. We denote by $\cM^{gr}$ and $\cM^{gr, \epsilon}$ the corresponding $\infty$-categories.

\begin{defn}
The category of \emph{graded objects} in $M$
\[M^{gr}=\comod_{\cO(\G_m)}(M)\]
is the category of comodules over $\cO(\G_m)\cong k[t, t^{-1}]$ with $\deg(t) = 0$.
\end{defn}
Explicitly, an object of $M^{gr}$ consists of a collection $\{A(n)\}_{n\in \Z}$ of objects of $M$ with tensor product defined by
\[(A\otimes B)(n) = \bigoplus_{n_1+n_2=n} A(n_1)\otimes B(n_2).\]
Note that the braiding isomorphism does not involve Koszul signs with respect to this grading. We will refer to this grading as the \emph{weight} grading.

Now consider the commutative bialgebra $B = k[x, t, t^{-1}]$ with $\deg(x) = -1$, $\deg(t) = 0$, where $\Delta(t) = t\otimes t$ and $\Delta(x) = x\otimes 1 + t\otimes x$.

\begin{defn}
The category of \emph{graded mixed objects} in $M$
\[M^{gr, \epsilon}=\comod_B(M)\]
is the category of comodules over $B$.
\end{defn}
Explicitly, objects of $M^{gr, \epsilon}$ are graded objects $\{A(n)\}_{n\in \Z}$ together with operations \[\epsilon\colon A(n)\rightarrow A(n+1)[1]\] such that $\epsilon^2=0$. Equivalently, $M^{gr, \epsilon}$ is the category of comodules over $k[x]$ in $M^{gr}$, where the weight and degree of $x$ are both $-1$. Since $k[x]$ is dualizable, $M^{gr, \epsilon}$ is equivalently the category of modules over $k[\epsilon]$ in $M^{gr}$, where the weight and degree of $\epsilon$ are both $1$.

The category $M^{gr, \epsilon}$ has a projective model structure whose weak equivalences and fibrations are defined componentwise.

Consider the functor $\triv\colon \cM\rightarrow \cM^{gr, \epsilon}$ which associates to any object of $\cM$ the same object with the trivial graded mixed structure. Let $\bu_M(0)$ be the trivial graded mixed object concentrated in weight $0$. Then $\triv(V) = V\otimes \bu_M(0)$. It is naturally a symmetric monoidal functor.
\begin{defn}
The \emph{realization functor}
\[|-|\colon \cM^{gr, \epsilon}\longrightarrow \cM\]
is the right adjoint to the trivial functor $\triv\colon \cM\rightarrow \cM^{gr, \epsilon}$.
\end{defn}
Explicitly, $|A| = \underline{\Map}_{\cM^{gr, \epsilon}}(\bu_M(0), A)$, where $\underline{\Map}_{\cM^{gr, \epsilon}}(-, -)$ is the $\cM$-enriched Hom. The realization functor has the following strict model. $\bu_M(0)$ has a cofibrant replacement in the projective model structure given by $\tilde{k}\otimes \bu_M$ for $\tilde{k}=k[z,w]$, where $\deg(z)=0$, $\deg(w) = 1$ and the weights of both $z$ and $w$ are 1. We define $\d z = w$ and $\epsilon z = wz$. The natural morphism $\tilde{k}\rightarrow k$ given by setting $z=w=0$ is a weak equivalence. Assume $A\in M^{gr, \epsilon}$ is fibrant in the projective model structure. Then
\[|A| = \underline{\Map}_{M^{gr, \epsilon}}(\bu_M(0), A) \cong \iHom_{M^{gr, \epsilon}}(1_M\otimes \tilde{k}, A) \in M.\]

Post-composing the realization functor with the forgetful functor $M\rightarrow \dg_k$ we get the following description:
\[|A|\cong \prod_{n\geq 0} A(n)\]
with the differential given by twisting the original differential by $\epsilon$. Since $|-|$ is a right adjoint to a symmetric monoidal functor, it naturally has a structure of a lax monoidal functor.

\begin{defn}
The \emph{left realization} functor
\[|-|^l\colon \cM^{gr, \epsilon}\rightarrow \cM\]
is the left adjoint to the trivial functor $\triv\colon \cM\rightarrow \cM^{gr, \epsilon}$.
\end{defn}
Explicitly, it is given by $A\mapsto (A\otimes_{k[\epsilon]} k)^{\G_m}$. We have a strict model of $|-|^l$ which is a functor $M^{gr, \epsilon}\rightarrow M$ given by $A\mapsto (A\otimes_{k[\epsilon]} \tilde{k})^{\G_m}$. In the case $M=\dg_k$ we have explicitly
\[|A|^l \cong \bigoplus_{n\leq 0} A(n)\]
with the differential given by twisting the original differential by $\epsilon$.

We have the following statements about our strict models of the realization functors.
\begin{prop}
The realization functor $|-|\colon M^{gr, \epsilon}\rightarrow M$ preserves weak equivalences between fibrant objects.
\label{prop:rightrealizationweq}
\end{prop}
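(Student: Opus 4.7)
The plan is to identify the realization functor, restricted to fibrant objects, with an internal hom out of a cofibrant object, and then conclude via Ken Brown's lemma. Explicitly, the definition provides the strict model
\[|A|\;\cong\;\iHom_{M^{gr,\epsilon}}\bigl(\bu_M\otimes\tilde{k},\,A\bigr)\]
for any $A$ fibrant in the projective model structure, where $\tilde{k}=k[z,w]$ is the explicit mixed replacement of $k$ described above. Granting that $M^{gr,\epsilon}$ inherits a closed symmetric monoidal projective model structure from $M$ (a standard verification, since graded mixed objects are comodules over the flat bialgebra $B=k[x,t,t^{\pm 1}]$ and the pushout--product axiom descends), and granting that $\bu_M\otimes\tilde{k}$ is cofibrant there, the tensor product $(\bu_M\otimes\tilde{k})\otimes(-)$ is a left Quillen functor, so its right adjoint $\iHom(\bu_M\otimes\tilde{k},-)$ is right Quillen, and in particular preserves weak equivalences between fibrant objects by Ken Brown's lemma.

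The key substantive step is thus establishing the cofibrancy of $\bu_M\otimes\tilde{k}$. Since $\bu_M$ is cofibrant in $M$ by assumption (1) and $M^{gr,\epsilon}$ is tensored in the evident way over $\dg_k^{gr,\epsilon}$, it suffices to show that $\tilde{k}$ is cofibrant in $\dg_k^{gr,\epsilon}$. I would verify this by a cellular decomposition along the weight filtration $F^N\tilde{k}=\bigoplus_{m\leq N}k\langle z^m,z^{m-1}w\rangle$: each inclusion $F^{N-1}\tilde{k}\hookrightarrow F^N\tilde{k}$ is obtained, in the weight-$N$ piece, by freely attaching the contractible two-term complex $k\cdot z^N\xrightarrow{\cdot N}k\cdot z^{N-1}w$, which is a pushout along one of the generating cofibrations for the projective model structure on $k[\epsilon]$-modules in $\dg_k^{gr}$. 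A transfinite composition of such pushouts then realizes $\tilde{k}$ as a cell complex.

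The main obstacle is precisely this cellular decomposition establishing cofibrancy of $\tilde{k}$, which requires careful bookkeeping of how the differential $\d$ and the mixed structure $\epsilon$ interact with the weight filtration (in particular, checking that $\epsilon z=wz$ lands in the already-attached lower-weight piece at each stage). Everything else is formal: the identification of $|-|$ with $\iHom(Q,-)$ comes directly from the definition given above the statement, and the conclusion then follows immediately from the Quillen adjunction $(Q\otimes-)\dashv\iHom(Q,-)$ together with Ken Brown's lemma.
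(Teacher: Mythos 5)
Your overall strategy is exactly the paper's: identify $|-|$ on fibrant objects with $\iHom_{M^{gr,\epsilon}}(\bu_M\otimes\tilde{k},-)$, observe that $\iHom_{M^{gr,\epsilon}}(-,-)$ is a Quillen bifunctor for the projective model structure, and conclude by Ken Brown's lemma once $\bu_M\otimes\tilde{k}$ is known to be cofibrant. The paper does not reprove that cofibrancy inside this proposition (it is asserted in the discussion preceding the statement), so the only place where you go beyond the paper is the cellular decomposition of $\tilde{k}$ --- and that is where there is a genuine problem.

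The filtration $F^N\tilde{k}=\bigoplus_{m\leq N}\tilde{k}(m)$ is not a filtration by subobjects of $M^{gr,\epsilon}$: the mixed structure \emph{raises} weight by one ($\epsilon\colon A(n)\to A(n+1)[1]$), so $\epsilon(z^N)=Nz^Nw$ sits in weight $N+1$ and escapes $F^N$. The check you single out --- that ``$\epsilon z=wz$ lands in the already-attached lower-weight piece'' --- fails in exactly the opposite direction: $wz$ has weight $2$, not weight $0$. A cellular filtration must be by $k[\epsilon]$-submodules; the natural candidate is $G^N$, the $k[\epsilon]$-submodule generated by $1,z,\dots,z^N$, where $G^{N-1}\hookrightarrow G^N$ attaches a free $k[\epsilon]$-cell on $z^N$ along $\d z^N=Nz^{N-1}w$, and for $N\geq 2$ this attaching element is available in $G^{N-1}$ because $z^{N-1}w=\tfrac{1}{N-1}\epsilon(z^{N-1})$ belongs to the previously attached free cell. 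Even with this correction the bottom of the filtration is the delicate point: the elements $1$ and $w$ are annihilated by $\epsilon$ and are not in its image, so they span \emph{trivial} $k[\epsilon]$-module summands of $\tilde{k}$, whereas a cofibrant object in the projective model structure is a retract of a semi-free module and hence has projective (indeed graded-free) underlying graded $k[\epsilon]$-module. So the ``careful bookkeeping'' you defer is not routine, and the decomposition as you describe it does not establish the cofibrancy your argument needs; everything else in your write-up is correct and coincides with the paper's proof.
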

\begin{proof}
The internal Hom $\iHom_{M^{gr, \epsilon}}(-, -)$ is a Quillen bifunctor in the projective model structure. Therefore, $\iHom_{M^{gr, \epsilon}}(\bu_M\otimes \tilde{k}, -)$ preserves weak equivalences between fibrant objects.
\end{proof}

\begin{prop}
The left realization functor $|-|^l\colon M^{gr,\epsilon}\rightarrow M$ preserves weak equivalences.
\label{prop:leftrealizationweq}
\end{prop}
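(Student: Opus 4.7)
The plan is to exploit the explicit strict model of the left realization together with a weight filtration, reducing the claim to the componentwise nature of weak equivalences in $M^{gr,\epsilon}$ combined with assumption~(5) on $M$ (filtered colimits preserve weak equivalences).

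First, I would unwind the strict model. In the case $M = \dg_k$ the formula in the text gives $|A|^l \cong \bigoplus_{n\le 0} A(n)$ with differential $\d + \epsilon$, where $\epsilon\colon A(n) \to A(n+1)[1]$ is the mixed structure and is effectively zero on the top weight $A(0)$; the analogous description holds in general $M$. Since $\epsilon$ raises weight by one, the subobjects
\[
F_p |A|^l \;:=\; \bigoplus_{-p \le n \le 0} A(n), \qquad p \ge 0,
\]
are preserved by $\d + \epsilon$ and form an exhaustive increasing filtration of $|A|^l$ by subcomplexes, with $|A|^l = \colim_p F_p |A|^l$. The filtration quotient $F_p / F_{p-1}$ is isomorphic to $A(-p)$ equipped only with its internal differential, since the $\epsilon$-contribution from $A(-p)$ lies in $F_{p-1}$ and vanishes in the quotient.

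Next, recall that a weak equivalence in $M^{gr,\epsilon}$ is by definition componentwise, so any weak equivalence $f \colon A \to B$ induces weak equivalences $F_p/F_{p-1}(A) \to F_p/F_{p-1}(B)$ for each $p \ge 0$. An induction on $p$, using the cofiber sequences $F_{p-1} \to F_p \to F_p/F_{p-1}$ and the stability (hence properness) of $M$, then shows that $F_p |A|^l \to F_p |B|^l$ is a weak equivalence for every $p$; the base case $p=0$ is just the componentwise equivalence $A(0) \to B(0)$. Finally, since $|A|^l$ is the filtered colimit of the $F_p |A|^l$, assumption~(5) on $M$ concludes that $|A|^l \to |B|^l$ is a weak equivalence.

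The main obstacle is the transition to a general $M$: one has to check that the cofibrant replacement $\tilde{k}$ of $\bu_M(0)$, viewed as a $k[\epsilon]$-module in $M^{gr}$, is concentrated in non-negative weights and interacts with the $\G_m$-invariants so as to yield the above direct-sum description of $|A|^l$. Once this bookkeeping is carried out, the filtration argument and the concluding appeal to assumption~(5) are essentially formal.
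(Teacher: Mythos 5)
Your argument is correct in substance, but it takes a different route from the paper. The paper works directly with the strict model $|A|^l=(A\otimes_{\bu_M\otimes k[\epsilon]}(\bu_M\otimes\tilde{k}))^{\G_m}$: the $\G_m$-invariants (i.e.\ passing to a weight component) preserve weak equivalences since these are defined componentwise, and $-\otimes_{k[\epsilon]}\tilde{k}$ preserves weak equivalences because $\tilde{k}$ is cofibrant as a $k[\epsilon]$-module, so $\bu_M\otimes\tilde{k}$ is flat over $\bu_M\otimes k[\epsilon]$ by the standing assumptions on $M$. Your weight filtration is essentially the cell structure witnessing that cofibrancy, unrolled by hand: the associated graded pieces $F_p/F_{p-1}\cong A(-p)$ are the ``cells'', the inductive step over the cofiber sequences replaces the abstract flatness statement, and the passage to the colimit uses assumption~(5). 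What the paper's argument buys is brevity and the fact that it never needs the ``twisted differential'' picture, which, as the paper itself warns in Section~\ref{sect:gradedmixed}, does not literally make sense in a general $M$; your version requires the bookkeeping you flag at the end, namely recasting $F_p|A|^l$ as $(A\otimes_{k[\epsilon]}\tilde{k}_{\leq p})^{\G_m}$ for the sub-$k[\epsilon]$-modules $\tilde{k}_{\leq p}\subset\tilde{k}$ of bounded weight. What your version buys is that it makes the role of Proposition~\ref{prop:realizationfiber} and of assumption~(5) completely explicit.

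One small correction: stability does not imply properness, so the parenthetical ``stability (hence properness)'' is not a valid inference. What you actually need is that each strict sequence $F_{p-1}\to F_p\to F_p/F_{p-1}$ is a \emph{homotopy} cofiber sequence; this holds because $F_{p-1}\to F_p$ is obtained by tensoring $A$ with an inclusion of $k[\epsilon]$-modules that is a pushout along a cofibration (equivalently, it is the content of Proposition~\ref{prop:realizationfiber}), so you should cite that rather than properness. With that repair the induction and the concluding appeal to assumption~(5) go through.
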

\begin{proof}
By definition the functor on $A\in M^{gr, \epsilon}$ is given by
\[|A|^l=(A \otimes_{\bu_M\otimes k[\epsilon]} (\bu_M\otimes \tilde{k}))^{\G_m},\]

The functor of $\G_m$-invariants clearly preserves weak equivalences, so we just need to show that the functor
\[\mod_{k[\epsilon]}(M^{gr})\rightarrow M^{gr}\]
given by
\[A\mapsto A\otimes_{k[\epsilon]} \tilde{k}\]
preserves weak equivalences.

But $\tilde{k}$ is cofibrant as a $k[\epsilon]$-module and hence $\bu_M\otimes \tilde{k}$ is flat over $\bu_M\otimes k[\epsilon]$ by our assumptions on the model category.
\end{proof}

An important feature of the realization functors are the natural filtrations that they carry. For a graded mixed object $A\in M^{gr, \epsilon}$ we define $|A|^{\geq n}$ to be the realization of $A\otimes k(-n)$. Similarly, we define $|A|^{l, \leq n}$ to be the left realization of $A\otimes k(-n)$

\begin{prop}
Suppose $A\in M^{gr, \epsilon}$ is a graded mixed object. Then
\[|A|^{\geq (n+1)}\rightarrow |A|^{\geq n}\rightarrow A(n)\]
is a cofiber sequence.

Similarly,
\[A(n)\rightarrow |A|^{l, \leq n}\rightarrow |A|^{l, \leq(n-1)}\]
is a fiber sequence.
\label{prop:realizationfiber}
\end{prop}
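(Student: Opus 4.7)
The plan is to reduce both statements to strict short exact sequences of graded mixed objects and then push them through the realization functors, using that these are adjoints and that $\cM$ and $\cM^{gr,\epsilon}$ are stable so fiber and cofiber sequences coincide.

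Write $B := A \otimes k(-n)$, so by definition $|A|^{\geq n} = |B|$ and $|A|^{l,\leq n} = |B|^l$. For the first part, let $B^{\geq 1} \subset B$ be the graded mixed subobject obtained by truncating weights $<1$: its weight-$m$ component is $B(m)=A(m+n)$ if $m\geq 1$ and $0$ otherwise, and the mixed structure $\epsilon\colon B(m)\to B(m+1)[1]$ restricts because $m\geq 1$ implies $m+1\geq 1$. The quotient $B/B^{\geq 1}$ is concentrated in weight $0$ with underlying object $A(n)$ and, by weight reasons, necessarily trivial mixed structure. Thus we have a strict short exact sequence of graded mixed objects
\[
0 \longrightarrow B^{\geq 1} \longrightarrow B \longrightarrow \triv(A(n))[\text{wt }0] \longrightarrow 0,
\]
which descends to a fiber (equivalently, cofiber) sequence in the stable $\infty$-category $\cM^{gr,\epsilon}$.

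Applying $|-|$, which is a right adjoint by construction and hence preserves fiber sequences, gives a fiber sequence in $\cM$. It remains to identify the terms: $|B| = |A|^{\geq n}$ by definition; $|\triv(A(n))[\text{wt }0]| = A(n)$ since the underlying graded object is concentrated in weight $0$ and the mixed structure vanishes; and $|B^{\geq 1}| \cong |A\otimes k(-n-1)| = |A|^{\geq n+1}$, because the natural map of graded mixed objects $A\otimes k(-n-1)\to B^{\geq 1}$ induced by the shift is an isomorphism. Stability of $\cM$ then converts this fiber sequence into the desired cofiber sequence.

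The second statement is entirely parallel, using the filtration from the other side. Truncating $B$ to weights $\leq -1$ gives a short exact sequence
\[
0 \longrightarrow \triv(A(n))[\text{wt }0] \longrightarrow B^{\leq 0} \longrightarrow B^{\leq -1} \longrightarrow 0
\]
in $M^{gr,\epsilon}$, again with trivial mixed structure on the weight-$0$ piece for weight reasons. Applying the left realization $|-|^l$, which is a left adjoint and hence preserves cofiber sequences (and, by stability, fiber sequences), and identifying $|B^{\leq 0}|^l = |A|^{l,\leq n}$ and $|B^{\leq -1}|^l \cong |A\otimes k(-n+1)|^l = |A|^{l,\leq n-1}$, yields the fiber sequence $A(n)\to |A|^{l,\leq n}\to |A|^{l,\leq n-1}$.

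The only mild subtlety is the interaction between the strict short exact sequences and the derived realization functors: for $|-|$ one needs a fibrant replacement in $M^{gr,\epsilon}$ before computing, as in Proposition \ref{prop:rightrealizationweq}, but the three short exact sequence terms can be replaced compatibly by a fibrant resolution without altering the identifications above. For $|-|^l$ no such care is needed thanks to Proposition \ref{prop:leftrealizationweq}. This bookkeeping is the only routine technical point; the conceptual content lies entirely in the weight-wise truncation and the adjointness of the realization functors.
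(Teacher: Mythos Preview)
The paper states this proposition without proof, so there is no reference argument to compare against. Your strategy---truncate $B=A\otimes k(-n)$ by weight to obtain a strict short exact sequence in $M^{gr,\epsilon}$, pass to the stable $\infty$-category, and push through the appropriate realization using that it is a (left or right) adjoint---is the natural one and is essentially correct.

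There is one genuine slip in the identification step. You assert that ``the natural map of graded mixed objects $A\otimes k(-n-1)\to B^{\geq 1}$ induced by the shift is an isomorphism''. This is false as written: for instance $(A\otimes k(-n-1))(0)=A(n+1)$ while $B^{\geq 1}(0)=0$, so these are different graded mixed objects and no such isomorphism exists. What \emph{is} true, and what you need, is that their realizations agree. This is immediate from the explicit formula $|C|\cong\prod_{m\geq 0} C(m)$ with the $\epsilon$-twisted differential: both $|B^{\geq 1}|$ and $|A\otimes k(-n-1)|$ compute to $\prod_{j\geq n+1} A(j)$ with the same differential. (Equivalently, the strict model $\iHom_{M^{gr,\epsilon}}(\tilde{k},-)$ only sees non-negative weights since $\tilde{k}$ is concentrated there, and after restricting to non-negative weights the two objects differ by a harmless weight re-indexing.) The same remark applies verbatim to your identification $|B^{\leq -1}|^l\cong |A\otimes k(-n+1)|^l$ in the second half. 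Once these identifications are justified correctly, the rest of your argument goes through.
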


Since $|-|^l$ is left adjoint to a symmetric monoidal functor, it is naturally an oplax symmetric monoidal functor.
\begin{prop}
Suppose $A_1, A_2\in M^{gr, \epsilon}$ are two objects concentrated in non-positive degrees. Then the natural morphism
\[|A_1\otimes A_2|^l\rightarrow |A_1|^l\otimes |A_2|^l\]
is an isomorphism.
\end{prop}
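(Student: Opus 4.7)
The plan is to verify the isomorphism directly on the strict model of $|-|^l$ recorded just before the statement, reducing it to the observation that under the non-positivity hypothesis the tensor product and the truncation to non-positive weights commute.

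First, I would use the explicit formula $|A|^l \cong \bigoplus_{n \leq 0} A(n)$ (with differential twisted by $\epsilon$) stated in the excerpt for $M=\dg_k$. For general $M$, the same identification holds: the strict model $(A\otimes_{k[\epsilon]}\tilde{k})^{\G_m}$ involves only tensor products and colimits in $M$, and the computation of the $\G_m$-invariants of $A\otimes_{k[\epsilon]}\tilde{k}$ reduces pointwise, by enrichment, to the $\dg_k$-case.

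Next, I would identify the oplax symmetric monoidal structure map explicitly. Since $|-|^l$ is left adjoint to the strong symmetric monoidal functor $\triv$, the oplax structure is obtained as the mate of the monoidal structure on $\triv$. Unravelling on the strict model, the map
\[|A_1\otimes A_2|^l \longrightarrow |A_1|^l \otimes |A_2|^l\]
is the natural inclusion sending the summand $A_1(n_1)\otimes A_2(n_2)$ of $(A_1\otimes A_2)(n_1+n_2)$ for $n_1+n_2\leq 0$ to the corresponding summand of the right-hand side.

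Using the non-positivity hypothesis, the summand $A_1(n_1)\otimes A_2(n_2)$ vanishes unless both $n_1,n_2\leq 0$, and any such pair automatically satisfies $n_1+n_2\leq 0$. Hence
\[|A_1\otimes A_2|^l = \bigoplus_{n\leq 0}\bigoplus_{n_1+n_2=n} A_1(n_1)\otimes A_2(n_2) = \bigoplus_{n_1,n_2\leq 0} A_1(n_1)\otimes A_2(n_2) = |A_1|^l\otimes |A_2|^l,\]
and the oplax structure map becomes the identity under this identification. The twisted differentials match by the Leibniz rule $\epsilon_{A_1\otimes A_2} = \epsilon_{A_1}\otimes\id + \id\otimes\epsilon_{A_2}$. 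The main step requiring care is the explicit identification of the oplax structure map on the strict model with the inclusion of summands; this is a routine adjunction computation, after which the remaining combinatorial check is immediate from the non-positivity assumption.
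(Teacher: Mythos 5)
Your proof is correct; the paper states this proposition without proof, and your direct verification on the strict model $\bigoplus_{n\leq 0}A(n)$ (with the Leibniz compatibility of the twisted differentials) is exactly the intended argument. The only cosmetic point is that in general the oplax structure map is a \emph{projection}, killing the summands $A_1(n_1)\otimes A_2(n_2)$ with $n_1+n_2\leq 0$ but some $n_i>0$, rather than an inclusion; under the non-positivity hypothesis no such summands exist, so your identification of the map with the identity stands.
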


We denote by $M^{\leq 0}\subset M$ the full subcategory of objects concentrated in non-positive weights and similarly for $M^{\leq 0, \epsilon}\subset M^{gr, \epsilon}$. The previous Proposition implies that we have a well-defined functor
\[|-|^l\colon \CAlg(M^{\leq 0, \epsilon})\rightarrow \CAlg(M).\]

We also have a functor of Tate realization which combines both left and ordinary realizations.
\begin{defn}
The \emph{Tate realization functor}
\[|-|^t\colon \cM^{gr, \epsilon}\longrightarrow \cM\]
is defined to be
\[|A|^t = \underset{i\geq 0}{\colim} |A\otimes k(-i)|.\]
\end{defn}

Finally, we will need a certain weakening of the notion of a graded mixed object. By definition the data of a mixed structure on a graded object $A\in M^{gr}$ boils down to a morphism of graded Lie algebras
\[\epsilon_A\colon k(2)[-1]\longrightarrow \Hom_{M^{gr}}(A, A).\]

Replacing strict morphisms by $\infty$-morphisms we arrive at the following definition:
\begin{defn}
A \emph{weak graded mixed object} is a graded object $A\in M^{gr}$ equipped with endomorphisms $\epsilon_1,\epsilon_2,\dots$ of $A$ where $\epsilon_i$ has degree $1$ and weight $i$ such that
\[(\d + \epsilon_1 + \epsilon_2 + \dots)^2 = 0.\]
\end{defn}

Note that graded mixed objects correspond to the case where $\epsilon_i = 0$ for $i>1$. Similarly, we can define $\infty$-morphisms of weak graded mixed objects:
\begin{defn}
An \emph{$\infty$-morphism} of weak graded mixed objects $f\colon A\rightarrow B$ is given by a collection of maps $f_0,f_1,\dots\colon A\rightarrow B$, where $f_i$ has degree $0$ and weight $i$ such that for $f= f_0 + f_1 + \dots$ we have
\[f\circ (\d + \epsilon^A_1 + \dots) = (\d + \epsilon^B_1 + \dots)\circ f.\]
\end{defn}

It is obvious that weak graded mixed objects form a category whose localization is equivalent to $\cM^{gr, \epsilon}$, see \cite[Section 3.3.4]{CPTVV}.

\subsection{Operads}

Our conventions about operads follow those of \cite{DR} and \cite{LV}. All operads we consider are operads in chain complexes.

Recall that a \emph{symmetric sequence} $V$ is a sequence of chain complexes $V(n)\in\dg_k$ together with an action of $S_n$ on $V(n)$. The category of symmetric sequences is monoidal with respect to the composition product and an operad is an algebra in the category of symmetric sequences. We denote the category of operads as $\Op_k$.

We denote by $\Tree_m(n)$ the groupoid of planar trees with labeled $n$ incoming edges and $m$ vertices. The morphisms are not necessarily planar isomorphisms between trees. For instance, the groupoid $\Tree_2(n)$ has components parametrized by $(p,n-p)$-shuffles $\sigma$ for any $p$, where a shuffle $\sigma$ corresponds to the tree $\t_\sigma$ as shown in Figure \ref{fig:shuffletree}.

\begin{figure}
\begin{minipage}{.4\textwidth}
\centering
\begin{tikzpicture}
\node[w] (v1) at (0, 0) {};
\node (root) at (0, -0.5) {};
\node[w] (v2) at (1, 1) {};
\node (v11) at (-1, 2) {$2$};
\node (v12) at (-0.4, 2) {$4$};
\node (v21) at (0.4, 2) {$1$};
\node (v22) at (1, 2) {$3$};
\node (v23) at (1.6, 2) {$5$};
\draw (v1) edge (root);
\draw (v1) edge (v2);
\draw (v1) edge (v11);
\draw (v1) edge (v12);
\draw (v2) edge (v21);
\draw (v2) edge (v22);
\draw (v2) edge (v23);
\end{tikzpicture}
\caption{The tree $\t_\sigma$ corresponding to a $(2,3)$-shuffle $\sigma$.} 
\label{fig:shuffletree}
\end{minipage}
\qquad
\begin{minipage}{.4\textwidth}
\centering
\begin{tikzpicture}
\node[w] (v0) at (0, 0) {};
\node (root) at (0, -0.5) {};
\node[w] (v1) at (-0.5, 0.5) {};
\node[w] (v2) at (0, 0.5) {};
\node[w] (v3) at (0.5, 0.5) {};
\node (v11) at (-0.7, 1) {};
\node (v12) at (-0.5, 1) {};
\node (v13) at (-0.3, 1) {};
\node (v21) at (-0.1, 1) {};
\node (v22) at (0.1, 1) {};
\node (v31) at (0.4, 1) {};
\node (v32) at (0.6, 1) {};
\draw (v0) edge (root);
\draw (v0) edge (v1);
\draw (v0) edge (v2);
\draw (v0) edge (v3);
\draw (v1) edge (v11);
\draw (v1) edge (v12);
\draw (v1) edge (v13);
\draw (v2) edge (v21);
\draw (v2) edge (v22);
\draw (v3) edge (v31);
\draw (v3) edge (v32);
\end{tikzpicture}
\caption{A pitchfork in $\Isom_\pitchfork(7, 3)$.}
\label{fig:pitchfork}
\end{minipage}
\end{figure}

We will also be interested in the set $\Isom_\pitchfork(n, r)$ of \emph{pitchforks} with $n$ incoming edges and $r+1$ vertices, see Figure \ref{fig:pitchfork} for an example and \cite[Section 2]{DW2} for more details. The groupoid $\Tree_3(n)$ has trees of two kinds: pitchforks in $\Isom_\pitchfork(n, 2)$ and the complement $\Tree_3^0(n)$.

Given a tree $\t\in\Tree_m(n)$ and a symmetric sequence $\cO$ we define $\cO(\t)$ to be the tensor product
\[\cO(\t)=\bigotimes_i \cO(n_i)\]
where the tensor product is over the vertices of $\t$ and $n_i$ is the number of incoming edges at vertex $i$.

Given an operad $\cO$, a tree $\t\in\Tree_m(n)$ defines a multiplication map
\[m_\t\colon \cO(\t)\rightarrow \cO(n).\]
Similarly, for a cooperad $\cC$ we have a comultiplication map
\[\Delta_\t\colon \cC(n)\rightarrow \cC(\t).\]

All cooperads are assumed to be coaugmented and conilpotent and we denote by $\cC\cong \overline{\cC}\oplus \bu$ the natural splitting.

Given a symmetric sequence $\cP$, the free operad $\Free(\cP)$ has operations parametrized by trees $\t$ whose vertices are labeled by operations in $\cP$. Given a cooperad $\cC$ we define its cobar complex $\Omega\cC$ to be the free operad on $\overline{\cC}[-1]$. The differential on the generators $X\in\overline{\cC}(n)[-1]$ is given by
\begin{equation}
\d X = -\s\d_1 (\s^{-1}X) - \sum_{\t\in\pi_0(\Tree_2(n))} (\s\otimes\s)(\t, \Delta_{\t}(\s^{-1} X))
\label{eq:cobardifferential}
\end{equation}
where $\d_1$ is the differential on the symmetric sequence $\cC$.

The following lemma is standard, and its proof can be found for example in \cite[Proposition 6.5.6]{LV}.
\begin{lm}
The cobar differential $\d$ on $\Omega\cC$ squares to zero.
\label{lm:cobardifferential}
\end{lm}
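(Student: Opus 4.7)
Since $\d$ is a derivation of the free operad $\Omega\cC=\Free(\overline{\cC}[-1])$, so is $\d^2$, and it therefore suffices to check that $\d^2$ vanishes on the space of generators $\overline{\cC}(n)[-1]$. The plan is to decompose $\d=\d_{\mathrm{int}}+\d_{\mathrm{dec}}$, where $\d_{\mathrm{int}}$ is the derivation extending $\s^{-1}X\mapsto -\s\d_1(\s^{-1}X)$ and $\d_{\mathrm{dec}}$ is the derivation extending $\s^{-1}X\mapsto -\sum_{\t\in\pi_0(\Tree_2(n))}(\s\otimes\s)(\t,\Delta_\t(\s^{-1}X))$, and then to verify separately that $\d_{\mathrm{int}}^2=0$, that $\d_{\mathrm{dec}}^2=0$, and that $\d_{\mathrm{int}}\d_{\mathrm{dec}}+\d_{\mathrm{dec}}\d_{\mathrm{int}}=0$.

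The first two identities are comparatively painless. The equation $\d_{\mathrm{int}}^2=0$ is an immediate consequence of $\d_1^2=0$ in $\cC$, transported to $\overline{\cC}[-1]$ via the suspension $\s$. The anticommutation $\d_{\mathrm{int}}\d_{\mathrm{dec}}+\d_{\mathrm{dec}}\d_{\mathrm{int}}=0$ on generators is the assertion that each partial decomposition $\Delta_\t$ is a chain map, i.e.\ that the cooperadic structure of $\cC$ is compatible with its internal differential; the sign works out because moving the degree $-1$ suspension $\s^{-1}$ past $\d_1$ produces a compensating sign.

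The main step is to establish $\d_{\mathrm{dec}}^2=0$. Applying $\d_{\mathrm{dec}}$ twice to a generator $\s^{-1}X\in\overline{\cC}(n)[-1]$ produces a sum indexed by three-vertex trees $\t'\in\pi_0(\Tree_3(n))$: the first application selects a two-vertex tree $\t\in\pi_0(\Tree_2(n))$ and applies $\Delta_\t$, and the second application further splits one of the two resulting cooperations via another two-vertex decomposition. Each $\t'\in\Tree_3^0(n)$ arises in exactly two ways from such a procedure, corresponding to the two possible orders in which its two internal edges can be ``uncontracted'', and the two resulting iterated cooperations on $\cC(\t')$ agree by coassociativity
\[(\Delta\otimes\id)\circ\Delta=(\id\otimes\Delta)\circ\Delta.\]
The analogous analysis for pitchforks $\t'\in\Isom_\pitchfork(n,2)$ identifies the two contributions as coming from decomposing off either prong first, again matched by coassociativity. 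The obstacle, and the only nontrivial bookkeeping in the whole proof, will be to check that the Koszul signs produced by commuting the suspensions $\s,\s^{-1}$ past the tensor factors involved in the iterated decomposition, combined with the intrinsic sign from reversing the order of the two splitting steps, yield exactly opposite signs on the two contributions to each $\t'$, so that they cancel.
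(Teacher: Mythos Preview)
Your proposal is correct and follows the standard argument. The paper does not actually prove this lemma but simply cites \cite[Proposition 6.5.6]{LV}; the proof there proceeds exactly as you outline, splitting $\d=\d_{\mathrm{int}}+\d_{\mathrm{dec}}$ and reducing $\d_{\mathrm{dec}}^2=0$ to coassociativity of the infinitesimal decomposition map, with the sign bookkeeping carried out via the suspension conventions.
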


We will also need a slight generalization of the above construction.
\begin{defn}
A \emph{curved cooperad} is a dg cooperad $\cC$ together with a morphism of symmetric sequences $\theta\colon \cC\rightarrow \bu[2]$ such that
\[(\theta\otimes \id_{\cC} - \id_{\cC}\otimes \theta) \Delta(x) = 0\]
for any $x\in\cC$.
\end{defn}
Note that this definition is slightly more restrictive than the corresponding notion in \cite{HM}, but it will suffice for our purposes. Given a curved cooperad $\cC$ we can also consider its cobar complex $\Omega\cC$, we refer to \cite[Section 3.3.6]{HM} for explicit formulas for the differentials on $\Omega\cC$.

Given an operad $\cO$ and a complex $A$, we define the free $\cO$-algebra on $A$ to be
\[\cO(A) = \bigoplus_n (\cO(n)\otimes A^{\otimes n})_{S_n}.\]

Similarly, for a cooperad $\cC$ and a complex $A$, we define the cofree conilpotent $\cC$-coalgebra on $A$ to be
\[\cC(A) = \bigoplus_n (\cC(n)\otimes A^{\otimes n})^{S_n}.\]

We will also be interested in colored symmetric sequences and colored operads. Let $\cV$ be a set. A $\cV$-colored symmetric sequence is a collection of complexes $\cV(v_1^{\otimes n_1}\otimes \cdots \otimes v_m^{\otimes n_m}, v_0)$ for every collection of elements $v_0,v_1,\dots, v_m\in \cV$ together with an action of $S_{n_1}\times \dots\times S_{n_m}$. As before, the category of $\cV$-colored symmetric sequences has a composition product and a $\cV$-colored operad is defined to be an algebra object in the category of $\cV$-colored symmetric sequences. We denote the category of colored operads by $\cV\Op_k$; in particular, if the set of colors has two elements, we denote it by $2\Op_k$.

The following theorem is due to Hinich \cite{Hi} in the uncolored case; the colored case is treated in \cite{BM}, \cite{Cav} and \cite{PS}.

\begin{thm}
The category of (colored) dg operads $\cV\Op_k$ has a model structure which is transferred from the model structure on $\cV$-colored symmetric sequences by the free-forgetful adjunction.
\end{thm}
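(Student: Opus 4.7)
The plan is to invoke the transfer theorem for cofibrantly generated model categories applied to the adjunction
\[\Free\colon \mathrm{SymSeq}_\cV(\dg_k) \rightleftarrows \cV\Op_k \colon \mathrm{forget},\]
where the category of $\cV$-colored symmetric sequences carries the componentwise projective model structure inherited from $\dg_k$.

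First I would verify the formal hypotheses. The category $\cV\Op_k$ is bicomplete and locally presentable, being the category of algebras over a finitary monad on a locally presentable category. The source is cofibrantly generated by translates of the generating (acyclic) cofibrations of $\dg_k$ concentrated in a single color and arity. The forgetful functor preserves filtered colimits, since the free colored operad monad is a sum over rooted $\cV$-colored trees and hence finitary. This reduces the problem to the substantive criterion: every pushout of a map of the form $\Free(j)$ with $j$ a generating acyclic cofibration in symmetric sequences, and every transfinite composite of such pushouts, must be a quasi-isomorphism.

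Following Hinich's approach in the uncolored case \cite{Hi}, I would establish this by constructing a functorial path object. Given a colored dg operad $\cO$, define $\cO^I := \cO \otimes_k \Omega_{PL}(\Delta^1)$, the componentwise tensor product with Sullivan's polynomial de Rham algebra of the interval; since $\Omega_{PL}(\Delta^1)$ is a commutative dg algebra, the result inherits a colored operad structure, and the two endpoint evaluations together with the unit provide a factorization $\cO \to \cO^I \to \cO\times \cO$ of the diagonal into a quasi-isomorphism followed by a componentwise surjection. A standard argument then converts the existence of such a functorial path object into the required acyclicity of transferred cell complexes.

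The main obstacle is controlling the symmetric group coinvariants appearing inside $\Free$: one must check that componentwise acyclic cofibrations still induce quasi-isomorphisms after being freely generated into a colored operad and pushed out along an arbitrary map. This is where characteristic zero is essential: by Maschke's theorem, coinvariants under finite group actions are exact on $\dg_k$, so that weak equivalences are preserved by each summand of $\Free$ along cell extensions. The colored generalization, carried out in \cite{BM}, \cite{Cav} and \cite{PS}, introduces only bookkeeping for the color labels on the trees indexing the free operad construction and does not require new ideas.
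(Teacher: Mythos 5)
The paper offers no proof of this theorem --- it is imported from the literature (Hinich for the uncolored case; Berger--Moerdijk, Caviglia and Pavlov--Scholbach for the colored case) --- and your sketch correctly reconstructs the standard Hinich-style transfer argument: verify smallness and preservation of filtered colimits, then apply Quillen's path-object argument with the functorial path object $\cO\otimes k[t,\d t]$, which is available because every colored dg operad is fibrant and $k[t,\d t]$ is an acyclic commutative interval in characteristic zero. The only remark is that your final paragraph on $S_n$-coinvariants is logically redundant once the path-object argument is in place (that argument exists precisely to avoid analyzing pushouts of free maps directly), though it correctly locates where Maschke's theorem and characteristic zero enter if one instead runs the direct filtration argument of \cite{BM}.
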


Considering the coradical filtration on a conilpotent cooperad $\cC$, one has the following:
\begin{prop}
Let $\cC$ be a conilpotent (colored) cooperad. Then $\Omega\cC$ is cofibrant.
\end{prop}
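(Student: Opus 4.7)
The plan is to exhibit $\Omega\cC$ as a sequential composition of pushouts of generating cofibrations in $\cV\Op_k$, using the coradical filtration on $\cC$ to build $\Omega\cC$ one stage at a time. In the transferred model structure, generating cofibrations are of the form $\Free(f)$ for $f$ a generating cofibration of $\cV$-colored symmetric sequences of chain complexes; since $k$ has characteristic zero, every symmetric sequence is cofibrant, which makes the verification at each stage essentially routine once the filtration is in place.

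First, I would introduce the coradical filtration $\bu = F_0\cC \subset F_1\cC \subset \dots$, whose colimit equals $\cC$ by the assumption of conilpotency. I would invoke the standard fact that each $F_n\cC$ is a sub-cooperad and that the reduced comultiplication sends $F_{n+1}\cC$ into the part of $\cC\circ\cC$ labeled by elements of $F_n\cC$. Next, let $\Omega_n \subset \Omega\cC$ denote the sub-graded-operad generated by $\overline{F_n\cC}[-1]$. Inspecting \eqref{eq:cobardifferential}, the linear term $-\s\d_1\s^{-1}$ preserves the filtration on generators, while the sum over 2-vertex trees involving $\Delta_\t$ sends a generator in $\overline{F_{n+1}\cC}[-1]$ into $\Omega_n$ by the coradical property. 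Hence each $\Omega_n$ is a sub-dg-operad, $\Omega_0 = \Free(0) = \bu$ is the initial operad, and $\Omega\cC = \colim_n \Omega_n$.

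The main step is then to show that each inclusion $\Omega_n \hookrightarrow \Omega_{n+1}$ is a cofibration. For this I would choose a section of the surjection $\overline{F_{n+1}\cC} \twoheadrightarrow Q_{n+1} := \overline{F_{n+1}\cC}/\overline{F_n\cC}$ as graded $\cV$-colored symmetric sequences, and observe that $\Omega_{n+1}$ is obtained from $\Omega_n$ by freely adjoining the symmetric sequence $Q_{n+1}[-1]$ of new generators whose differentials lie in $\Omega_n$ by the previous step. This exhibits the inclusion as a pushout of the map of free operads associated to the generating cofibration $Q_{n+1}[-2] \hookrightarrow \mathrm{cone}(Q_{n+1}[-2])$ along an attaching map $\Free(Q_{n+1}[-2]) \to \Omega_n$ encoding the prescribed differentials. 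Passing to the filtered colimit yields the cofibrancy of $\Omega\cC$.

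The main technical obstacle will be verifying that the coradical filtration genuinely controls the cobar differential, i.e.\ that the 2-vertex term $\Delta_\t(\s^{-1}x)$ for $x$ a generator in weight $n+1$ really lands in the sub-operad generated by generators of weight $\leq n$. This reduces to careful bookkeeping of the interaction between the cooperadic comultiplication and the coradical filtration, but it is precisely this property of conilpotency that gives $\Omega\cC$ the semi-free structure underlying its cofibrancy.
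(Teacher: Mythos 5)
Your argument is correct and is exactly the one the paper intends: the paper offers no proof beyond the remark that the statement follows ``considering the coradical filtration,'' and your cell-attachment argument (filtering $\Omega\cC$ by the suboperads generated by $\overline{F_n\cC}[-1]$, using conilpotency to see the cobar differential drops filtration, and realizing each stage as a pushout of a free map on a cofibration of symmetric sequences) is the standard way to make that remark precise. No gaps worth flagging; the only points needing care --- the existence of a graded splitting of $\overline{F_{n+1}\cC}\twoheadrightarrow Q_{n+1}$ and the cofibrancy of the symmetric sequences involved --- are automatic in characteristic zero, as you note.
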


Here are our main examples of operads and cooperads:
\begin{itemize}
\item If $A$ is an object of $M$, $\End_A$ is a dg operad with $\End_A(n) = \Hom_M(A^{\otimes n}, A)$. Similarly, if we have a pair of objects $A,B\in M$, then $\End_{A,B}$ is a $\{\cA, \cB\}$-colored dg operad with
\[\End_{A, B}(\cA^{\otimes n}\otimes \cB^{\otimes m}, \cA) = \Hom(A^{\otimes n}\otimes B^{\otimes m}, A)\] and similarly for $\End_{A, B}(-, \cB)$.

\item $\Comm$ is the operad of unital commutative algebras, $\Lie$ is the operad of Lie algebras. $\bP_n$ is the operad of unital shifted Poisson algebras with the commutative multiplication of degree 0 and the Poisson bracket of degree $1-n$. We denote by $\Comm^{\nu}$ and $\bP_n^{\nu}$ the non-unital versions of the operads $\Comm$ and $\bP_n$.

\item The operads $\Lie$ and $\bP_n$ can also be upgraded to operads in graded complexes where we set the weight of the bracket to be $-1$ and the weight of the multiplication to be zero.

\item $\coComm$ is the cooperad of non-counital cocommutative coalgebras, $\coLie$ is the cooperad of Lie coalgebras. $\coP_n$ is the cooperad of non-counital shifted Poisson coalgebras with the cocommutative comultiplication of degree 0 and the Poisson cobracket of degree $1-n$. We denote by $\coComm^{\cu}$ and $\coP_n^{\cu}$ the counital versions of the cooperads $\coComm$ and $\coP_n$.

\item If $\cO$ is a symmetric sequence, we denote by $\cO\{n\}$ the symmetric sequence defined by
\[\cO\{n\}(m) = \cO(m)\otimes \sgn_m^{\otimes n}[n(m-1)],\]
where $\sgn_m$ is the sign representation of $S_m$. If $\cO$ is a (co)operad, then so is $\cO\{n\}$. For instance, $\Lie\{n\}$ is the operad of Lie algebras with bracket of degree $-n$.
\end{itemize}

Given a dg operad $\cO$, we can consider $\cO$-algebras in $M$. We denote the corresponding category by $\alg_{\cO}(M)$ and its localization by $\balg_{\cO}(\cM)$. We also introduce shorthands
\[\alg_{\Lie}^{gr} = \alg_{\Lie}(\dg_k^{gr}),\qquad \alg(M)=\alg_{\Ass}(M).\]

\subsection{Lie algebras}

Given a nilpotent $L_\infty$ algebra $\g$, the set of Maurer--Cartan elements is defined to be the set of degree $1$ elements $x\in\g$ such that
\[\d x + \sum_{n\geq 2} \frac{1}{n!}[x, \dots, x]_n = 0.\]
When there is no confusion, we will omit the subscript on the bracket. Similarly, if $\g$ is a curved nilpotent $L_\infty$ algebra with curvature $\theta$, the Maurer--Cartan equation is
\[\theta + \d x + \sum_{n\geq 2} \frac{1}{n!}[x, \dots, x]_n = 0.\]

Let $\Omega_\bullet$ be the cosimplicial commutative algebra of polynomial differential forms on simplices. For instance, $\Omega_0 = k$ and $\Omega_1 = k[x, y]$ with $\deg(x) = 0$, $\deg(y) = 1$ and $\d x = y$.
\begin{defn}
The \emph{space of Maurer--Cartan elements} $\underline{\MC}(\g)$ is the simplicial set of Maurer--Cartan elements in $\g\otimes \Omega_\bullet$.
\end{defn}

Now suppose that $\g$ is a (curved) $L_\infty$ algebra equipped with a decreasing filtration $\g = \g^a\supset \g^{a+1}\supset \dots$ such that $[\g^{a_1}, \dots, \g^{a_n}]_n\subset \g^{\sum a_i + 1 - n}$ and such that the quotients $\g / \g^a$ are nilpotent. We call such filtrations \emph{admissible}. For an $L_\infty$ algebra $\g$ equipped with such an admissible filtration, we define
\[\underline{\MC}(\g) = \lim_a \underline{\MC}(\g / \g^a).\]

\begin{defn}
Let $\g$ be a (curved) $L_\infty$ algebra and $x\in\g$ a Maurer--Cartan element. The \emph{$L_\infty$ algebra $\g$ twisted by $x$} has the same underlying graded vector space; the brackets are defined by
\[[x_1, \dots, x_n]_n = \sum_{k \geq 0} \frac{1}{k!} [x, \dots, x, x_1, \dots, x_n]_{n+k}.\]
\label{def:MCtwist}
\end{defn}

\begin{lm}
Let $\g_1$ and $\g_2$ be two admissible filtered $L_\infty$ algebras with a pair of morphisms $p\colon \g_1\rightarrow \g_2$ and $i\colon \g_2\rightarrow \g_1$ such that $p\circ i=\id_{\g_2}$. Then the homotopy fiber of
\[\underline{\MC}(\g_1)\rightarrow \underline{\MC}(\g_2)\]
at a Maurer--Cartan element $x\in\g_2$ is equivalent to the space of Maurer--Cartan elements in the $L_\infty$ algebra $\ker p$ twisted by the element $i(x)$.
\label{lm:MCfiber}
\end{lm}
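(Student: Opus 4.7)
The plan is to exploit the splitting $i$ of $p$ to decompose Maurer--Cartan elements of $\g_1$ lying over a fixed MC element $x\in\g_2$, and then identify the resulting strict fiber with the described twisted MC space.

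First, I would verify that $p_*\colon \underline{\MC}(\g_1)\rightarrow\underline{\MC}(\g_2)$ is a Kan fibration, so that its strict and homotopy fibers agree. Since each MC space is by construction a limit along the admissible filtration, it suffices to check this on each quotient $\g_1/\g_1^a\rightarrow \g_2/\g_2^a$, which is a surjection of honest nilpotent $L_\infty$ algebras (surjectivity uses the section $i$). That surjections of nilpotent $L_\infty$ algebras induce Kan fibrations on Maurer--Cartan spaces is classical, and can be proved by an inductive horn-filling argument along the lower central series; the property then passes to the inverse limit by standard facts about fibrations.

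Next, the strict fiber over $x$ consists of $z\in \g_1\otimes\Omega_\bullet$ with $p(z)=x$ satisfying the MC equation of $\g_1$. By the splitting, every such $z$ decomposes uniquely as $z = i(x) + y$ with $y\in \ker p\otimes\Omega_\bullet$. Expanding the MC equation for $z$ by multilinearity of the $L_\infty$ brackets organizes it as the sum of (a) the curvature of $\g_1$ twisted by $i(x)$ in the sense of Definition \ref{def:MCtwist}, and (b) the MC equation for $y$ with respect to the brackets of $\g_1$ twisted by $i(x)$. Applying $p$ to the expression in (a) produces the MC equation for $x$ in $\g_2$, which vanishes by hypothesis; hence (a) actually lies in $\ker p$. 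Since $p$ is a strict $L_\infty$ morphism, $\ker p$ is closed under the original brackets, and the same applies to the twisted brackets (applying $p$ to a twisted bracket of elements of $\ker p$ gives a sum whose summands each contain a vanishing factor from $\ker p$). Consequently, $\ker p$ inherits from the twist a (possibly curved) $L_\infty$ structure whose MC equation for $y$ agrees term-by-term with the MC equation for $i(x)+y$ in $\g_1$. This identification is natural in $\Omega_\bullet$ and compatible with the admissible filtrations, yielding the desired equivalence of simplicial sets.

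The main obstacle is the Kan fibration claim: the algebraic rearrangement of the MC equation is a routine consequence of multilinearity and of the definition of the twist, whereas the fibration property requires the standard but somewhat delicate horn-filling argument for surjections of nilpotent $L_\infty$ algebras, together with care in passing to the filtered limit that defines $\underline{\MC}$.
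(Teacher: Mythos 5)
Your proposal is correct and follows essentially the same route as the paper: establish that $\underline{\MC}(\g_1)\to\underline{\MC}(\g_2)$ is a fibration (the paper simply cites \cite[Theorem 3.2]{Ya1} for surjections of admissible filtered $L_\infty$ algebras, whereas you sketch the horn-filling argument), then split the strict fiber via $i$ and rewrite the Maurer--Cartan equation for $i(x)+y$ as the twisted equation for $y\in\ker p$. The only point where you are slightly less sharp is the constant term: since $i$ is a strict $L_\infty$ morphism, $i(x)$ is itself a Maurer--Cartan element of $\g_1$, so that term vanishes outright rather than merely lying in $\ker p$; this makes the twisted structure on $\ker p$ genuinely uncurved, as the statement implicitly requires.
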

\begin{proof}
By assumption the morphism $p\colon \g_1\rightarrow \g_2$ is surjective. By \cite[Theorem 3.2]{Ya1} the induced morphism $\underline{\MC}(\g_1)\rightarrow\underline{\MC}(\g_2)$ is a fibration of simplicial sets. Hence the homotopy fiber is equivalent to the strict fiber, i.e. the inverse limit of fibers of $\underline{\MC}(\g_1/\g_1^n)\rightarrow \underline{\MC}(\g_2/\g_2^n)$.

The set of $m$-simplices of the strict fiber of $\underline{\MC}(\g_1/\g_1^n)\rightarrow \underline{\MC}(\g_2/\g_2^n)$ at $x\in\g_2$ is isomorphic to the set of Maurer-Cartan elements $y\in \g_1\otimes \Omega_m$ such that $p(y) = x$. Using $i$ we can identify
\[\g_1\cong \g_2\oplus \ker p\]
as filtered $L_\infty$ algebras.

Therefore, the set of $m$-simplices of the strict fiber consists of elements $y_0\in\ker p$ satisfying the equation
\[\theta + \d(y_0 + i(x)) + \sum_{n\geq 2} \frac{1}{n!}[y_0 + i(x), \dots, y_0 + i(x)]_n.\]

Let us now expand this equation. The term not involving $y_0$ is
\[\theta + \d i(x) + \sum_{n\geq 2} \frac{1}{n!}[i(x), \dots, i(x)]_n = 0\]
by the Maurer--Cartan equation for $i(x)$. Therefore, we get the equation
\[\d y_0 + \sum_{n \geq 2}\frac{1}{(n-1)!} [i(x), \dots, i(x), y_0]_n + \sum_{n\geq 2}\frac{1}{n!}\sum_{k\geq 0}\frac{n!}{k!(n-k)!} [i(x), \dots, i(x), y_0, \dots, y_0]_n,\]
i.e. the Maurer--Cartan equation in the $L_\infty$ algebra $\ker p$ twisted by $i(x)$.
\end{proof}

Recall that the operad $\Lie$ is an operad in graded complexes where the weight of the bracket is $-1$. The operad of non-curved $L_\infty$-algebras can also be enhanced to an operad in graded complexes by assigning weight $1-n$ to the bracket $[-, \dots, -]_n$. Alternatively, one can consider the $\coComm\{1\}$ as a graded cooperad with coproduct of weight $1$ and define the $L_\infty$ operad as $\Omega(\coComm\{1\})$.

Given a graded $L_\infty$ algebra $\g$ we denote by
\[\g^{\geq m} = \prod_{n\geq m} \g(n)\]
its completion in weights $\geq m$. Then $\g^{\geq 2}$ carries an admissible filtration
\[\g^{\geq 2}\supset \g^{\geq 3} \supset \dots\]

A version of the following statement was proved by the first author in \cite[Section 4]{Me}. Let $k(2)[-1]$ be the trivial graded $L_\infty$ algebra in degree $1$ and weight 2.

\begin{prop}
Let $\g$ be a graded $L_\infty$ algebra in $\dg_k$. There is an equivalence of spaces
\[\Map_{\alg_{\mathrm{L}_\infty}^{gr}}(k(2)[-1], \g)\cong \underline{\MC}(\g^{\geq 2}).\]

Similarly, if $\g$ is a graded dg Lie algebra, then there is an equivalence of spaces
\[\Map_{\alg_{\Lie}^{gr}}(k(2)[-1], \g)\cong \underline{\MC}(\g^{\geq 2}).\]
\label{prop:MCgradeddglie}
\end{prop}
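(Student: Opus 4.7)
The plan is to model both sides as Maurer--Cartan elements in a single convolution $L_\infty$-algebra via Koszul duality, and then to identify that convolution algebra with $\g^{\geq 2}$.

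First I would fix a cofibrant resolution $\widetilde{A}\to k(2)[-1]$ in graded $L_\infty$-algebras. A natural choice is the cobar--bar resolution $\widetilde{A} = \Omega B(k(2)[-1])$. Since $k(2)[-1]$ is trivial (no differential, no brackets), $B(k(2)[-1])$ has no internal codifferential and is an explicit cofree $\coComm\{1\}$-coalgebra on the generator in weight $2$. By the standard bar--cobar adjunction (see \cite[Section 10]{LV}), morphisms of graded $L_\infty$-algebras $\widetilde{A}\to \g$ are in natural bijection with twisting morphisms $B(k(2)[-1])\to\g$, i.e. with Maurer--Cartan elements of the convolution $L_\infty$-algebra $\Hom^{gr}(B(k(2)[-1]),\g)$. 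To promote this to simplicial sets one substitutes $\g$ by its simplicial frame $\g\otimes \Omega_\bullet$, obtaining $\Map_{\alg_{\mathrm{L}_\infty}^{gr}}(k(2)[-1],\g) \cong \underline{\MC}(\Hom^{gr}(B(k(2)[-1]),\g))$.

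Second, I would directly compute the convolution algebra. Because $B(k(2)[-1])$ is a cofree coalgebra on a single generator of weight $2$ and has trivial codifferential, evaluation on the generators together with the natural weight filtration identifies $\Hom^{gr}(B(k(2)[-1]),\g)$ with $\g^{\geq 2}$ as filtered $L_\infty$-algebras with respect to the admissible filtration $\g^{\geq 2}\supset\g^{\geq 3}\supset\cdots$; under this identification the convolution Maurer--Cartan equation becomes \emph{precisely} the Maurer--Cartan equation in $\g^{\geq 2}$. Combining with the previous step gives the desired equivalence $\Map_{\alg_{\mathrm{L}_\infty}^{gr}}(k(2)[-1],\g)\cong \underline{\MC}(\g^{\geq 2})$, with naturality in $\g$ handled by applying the same argument levelwise to $\g\otimes\Omega_\bullet$ and using \cite[Theorem 3.2]{Ya1} to get fibrancy. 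The dg Lie case follows immediately by restricting to $\g$ with only the binary bracket: the full inclusion $\alg_{\Lie}^{gr}\hookrightarrow \alg_{\mathrm{L}_\infty}^{gr}$ preserves mapping spaces, and the Maurer--Cartan equation in $\g^{\geq 2}$ reduces to $\d x + \tfrac{1}{2}[x,x]=0$.

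The main obstacle I expect is the careful bookkeeping in step two. In the present conventions the $n$-ary $L_\infty$ bracket has weight $1-n$ and the Koszul cooperad $\coComm\{1\}$ has coproduct of weight $1$, so the weight grading is not preserved by the convolution bracket in a naive way; the identification of the convolution algebra with $\g^{\geq 2}$ must be organized so that every weight $w\geq 2$ of $\g$ is matched with a component of the twisting morphism (not just the weights hit directly by generators of $B(k(2)[-1])$), and the resulting filtration matches the admissible filtration by weight. Once this matching is set up, the equality of the two Maurer--Cartan equations and the simplicial extension are both formal consequences.
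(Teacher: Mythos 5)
Your proposal is correct and follows essentially the same route as the paper: the paper's cofibrant replacement $L_0$ is exactly the cobar--bar resolution $\Omega B(k(2)[-1])$, written out as the free graded $L_\infty$-algebra on generators $p_2,p_3,\dots$ (one in each weight $n\geq 2$, in degree $1$) with the Koszul differential, and the computation of $\Hom(L_0,\g\otimes\Omega_\bullet)$ is precisely your identification of twisting morphisms with Maurer--Cartan elements of $\g^{\geq 2}$. The weight bookkeeping you flag is handled in the paper by the observation that the cooperad weights on $\coComm\{1\}$ shift $\Sym^{m}(k(2))$ into weight $m+1$, so the generators do sweep out all weights $\geq 2$.
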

\begin{proof}
Recall that in the model category of $L_\infty$ algebras every object is fibrant, so we just need to find a cofibrant replacement $L$ for $k(2)[-1]$.

By \cite[Lemma 6.5.14]{LV} the symmetric sequence $L_\infty\circ_{\alpha} \coComm\{1\}$ equipped with the Koszul differential $\alpha$ is quasi-isomorphic to the unit symmetric sequence, so the natural morphism
\[L_\infty(\overline{\Sym}_\bullet(V[1])[-1])\rightarrow V\]
is a quasi-isomorphism for any complex $V$. Here $\overline{\Sym}_\bullet$ is the reduced symmetric algebra and the free $L_\infty$-algebra $L_\infty(\overline{\Sym}_\bullet(V[1])[-1])$ is equipped with the Koszul differential $\alpha$.

Let $V = k(2)[-1]$, then $\overline{\Sym}_\bullet(V[1])[-1] = \mathrm{span}\{p_2, p_3, \dots\}$, where $p_n$ has weight $n$ and degree $1$. Let $p=\sum_{i=2}^\infty p_i$. Then the Koszul differential gives
\[\d p + \sum_{n=2}^\infty [p, \dots, p] = 0.\]

Therefore, if we define $L_0$ to be the free graded $L_\infty$ algebra on the generators $p_2, p_3 , \dots$ equipped with the differential as above, then the natural morphism $L_0\rightarrow k(2)[-1]$ given by projecting on $p_2$ is a quasi-isomorphism. Moreover, by construction $L_0$ is cofibrant.

Therefore, one has equivalences of spaces
\begin{align*}
\Map_{\alg_{\mathrm{L}_\infty}^{gr}}(k(2)[-1], \g) &\cong \Hom_{\alg_{\mathrm{L}_\infty}^{gr}, \bullet}(L_0, \g) \\
&\cong \Hom_{\alg_{\mathrm{L}_\infty}^{gr}}(L_0, \g\otimes \Omega_\bullet).
\end{align*}

The latter Hom is easy to compute and it exactly gives the set of Maurer--Cartan elements in $\g^{\geq 2}\otimes \Omega_\bullet$.
\end{proof}

\section{Operadic resolutions}
\label{sect:operadicres}

In this section we collect some useful results which describe spaces of $\cO$-algebra structures for an operad $\cO$ and spaces of $\cO$-algebra morphisms.

\subsection{Deformations of algebras}
\label{sect:deformationalgebras}

Let $\cC$ be a coaugmented dg cooperad so that we can split $\cC\cong \overline{\cC}\oplus \bu$. For an operad $\cP$ we introduce the convolution algebra $\Conv(\cC, \cP)$, a dg Lie algebra, as follows. As a complex it is defined to be
\[\Conv(\cC, \cP)=\prod_n \Hom_{S_n}(\overline{\cC}(n), \cP(n)).\]

For brevity we denote
\[\Conv(\cC; A) = \Conv(\cC, \End_A).\]

We introduce a pre-Lie structure on $\Conv(\cC, \cP)$ by
\[(f\bullet g)(X) = \sum_{\t\in\pi_0(\Tree_2(n))} \mu_\t((f\otimes g)\Delta_\t(X))\]
for any $f,g\in\Conv(\cC, \cP)$ and $X\in\cC(n)$. The Lie bracket is defined to be
\[[f, g] = f\bullet g - (-1)^{|f||g|}g\bullet f.\]

Here is a more explicit description of the pre-Lie structure. Recall that operations of arity $m$ in the pre-Lie operad are parametrized by rooted trees with $m$ vertices \cite{CL}. Given a rooted tree $\t$ we denote by $\Tree_m(\t, n)$ the groupoid of trees obtained by attaching incoming edges at each vertex of the tree $\t$ such that the total number of incoming edges is $n$ (Fig. \ref{fig:prelie}). The action of the rooted tree $\t$ on the elements $f_1, \dots, f_m\in \Conv(\cC; A)$ is given by the sum over the trees $\t'\in\pi_0(\Tree_m(\t, n))$ where each term in the sum is given by the composition associated to the tree $\t'$ with vertices labeled by the elements $f_i$.

\begin{figure}
\begin{tikzpicture}
\node[w] (v1) at (0, 0) {$1$};
\node (root) at (0, -1) {};
\node[w] (v2) at (-1, 1) {$2$};
\node[w] (v3) at (1, 1) {$3$};
\draw (v1) edge (v2);
\draw (v1) edge (v3);
\draw (v1) edge (root);
\end{tikzpicture}
\qquad
\begin{tikzpicture}
\node[w] (v1) at (0, 0) {$1$};
\node (root) at (0, -1) {};
\node[w] (v2) at (-1, 1) {$2$};
\node[w] (v3) at (1, 1) {$3$};
\node (v11) at (-0.2, 1) {};
\node (v12) at (0, 1) {};
\node (v13) at (0.2, 1) {};
\node (v21) at (-1.2, 2) {};
\node (v22) at (-0.8, 2) {};
\node (v31) at (1.2, 2) {};
\node (v32) at (0.8, 2) {};
\draw (v1) edge (root);
\draw (v1) edge (v2);
\draw (v1) edge (v3);
\draw (v1) edge (v11);
\draw (v1) edge (v12);
\draw (v1) edge (v13);
\draw (v2) edge (v21);
\draw (v2) edge (v22);
\draw (v3) edge (v31);
\draw (v3) edge (v32);
\end{tikzpicture}
\caption{A rooted tree $\t$ and an element of $\Tree_3(\t, 7)$.}
\label{fig:prelie}
\end{figure}
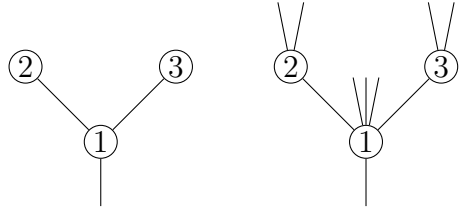

For example, consider the tree $\t$ defining the pre-Lie bracket. Then $\Tree_2(\t, n)\cong \Tree_2(n)$ whose connected components are parametrized by trees $\t_\sigma$ associated to the shuffles $\sigma\in S_{p, n-p}$. Therefore,
\begin{equation}
(f\bullet g)(X; a_1, \dots, a_n) = \sum_{p=0}^{n}\sum_{\sigma\in S_{p,n-p}} \pm f(X^{\t_\sigma}_{(1)}; g(X^{\t_\sigma}_{(2)}; a_{\sigma(1)}, \dots, a_{\sigma(p)}), a_{\sigma(p+1)}, \dots, a_{\sigma(n)}),
\label{eq:convprelie}
\end{equation}
where the sign arises from the permutation of $\{f,g,X^{\t_\sigma}_{(1)},X^{\t_\sigma}_{(2)},a_1,\dots,a_n\}$ and the tree $\t_\sigma$ is the tree corresponding to the shuffle $\sigma$.

If $\cP$ is an operad in $M$, we can enhance $\Conv(\cC, \cP)$ to a Lie algebra in $M$ by considering the internal Hom in $M$ and similarly for $\Conv(\cC; A)$ in the case $A$ is an object of $M$.

If $\cC$ is a graded cooperad, we introduce a graded Lie algebra structure on $\Conv(\cC, \cP)$ by considering the internal grading on $\cC$ and putting $\cP$ in weight 1. In this way $\Conv(\cC, \cP)$ acquires a Lie structure of weight $-1$. Note that graded morphisms $\Omega\cC\rightarrow \cP$ give rise to elements of $\Conv(\cC, \cP)$ which are pure of weight $1$.

Finally, if $\cC$ is a curved cooperad, we obtain a curved Lie algebra structure on $\Conv(\cC, \cP)$ as follows. The curving on $\Conv(\cC, \cP)$ is the weight $1$, degree $2$ element obtained as a composite
\[\cC(1)\stackrel{\theta}\rightarrow k[2]\rightarrow \cP(1)[2],\]
where the second morphism is the unit morphism in $\cP$.

The following statement is proved by considering the coradical filtration on $\cC$:
\begin{lm}
Suppose $\cC$ is a conilpotent (curved) cooperad. Then $\Conv(\cC, \cP)$ is pronilpotent.
\label{lm:conilpotentadmissible}
\end{lm}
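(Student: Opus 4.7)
The plan is to construct an admissible decreasing filtration on $\Conv(\cC,\cP)$ with nilpotent quotients by dualizing the coradical filtration on $\cC$. Since $\cC$ is conilpotent, the coaugmentation coideal $\overline{\cC}$ is the union of its coradical filtration $0=\overline{F}_0\overline{\cC}\subset\overline{F}_1\overline{\cC}\subset\cdots$, where $\overline{F}_k\overline{\cC}$ consists of elements killed by the $(k+1)$-fold iterated reduced cocomposition. A standard consequence is that for any $2$-level tree $\t\in\pi_0(\Tree_2(n))$ one has
\[
\Delta_\t(\overline{F}_c\overline{\cC})\ \subset\ \sum_{i+j=c}\overline{F}_i\overline{\cC}(\cdot)\otimes\overline{F}_j\overline{\cC}(\cdot).
\]

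I would then define the decreasing filtration $\Conv^{\geq k}(\cC,\cP)=\{f\in\Conv(\cC,\cP):f|_{\overline{F}_{k-1}\overline{\cC}}=0\}$ for $k\geq 1$. Exhaustiveness of the coradical filtration gives a canonical isomorphism $\Conv(\cC,\cP)\cong\lim_k\Conv(\cC,\cP)/\Conv^{\geq k}(\cC,\cP)$, so this filtration is complete and separated. Using the explicit formula \eqref{eq:convprelie}, if $f\in\Conv^{\geq a}$, $g\in\Conv^{\geq b}$, and $X\in\overline{F}_c\overline{\cC}$, then a summand of $(f\otimes g)\Delta_\t(X)$ can be nonzero only if $X^\t_{(1)}\notin\overline{F}_{a-1}$ and $X^\t_{(2)}\notin\overline{F}_{b-1}$, forcing $c=i+j\geq a+b$. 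Thus $f\bullet g$ vanishes on $\overline{F}_{a+b-1}\overline{\cC}$, i.e.\ $\Conv^{\geq a}\bullet\Conv^{\geq b}\subset\Conv^{\geq a+b}$. In particular $[\Conv^{\geq a},\Conv^{\geq b}]\subset\Conv^{\geq a+b}$, which is stronger than admissibility, and each quotient $\Conv/\Conv^{\geq N}$ is nilpotent since $N$-fold iterated brackets of elements of $\Conv=\Conv^{\geq 1}$ land in $\Conv^{\geq N+1}$.

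For the curved case, the curvature of $\Conv(\cC,\cP)$ is the composite $\overline{\cC}(1)\to\bu[2]\to\cP(1)[2]$, which manifestly lies in $\Conv^{\geq 1}$, so the filtered curved dg Lie algebra structure is well-defined; one checks that the filtration behaves well with respect to the curvature axiom using the compatibility condition $(\theta\otimes\id-\id\otimes\theta)\Delta=0$. The main subtlety in this argument is verifying the filtration behavior of the $2$-level cocomposition maps $\Delta_\t$ on $\overline{F}_\bullet\overline{\cC}$, which follows formally from coassociativity and the definition of the coradical filtration.
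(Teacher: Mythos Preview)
Your proof is correct and follows exactly the approach the paper indicates: the paper merely states that the lemma ``is proved by considering the coradical filtration on $\cC$'' without giving any details, and your argument supplies precisely those details---defining the decreasing filtration by vanishing on coradical stages, checking completeness from exhaustiveness of the coradical filtration, and verifying that $\Conv^{\geq a}\bullet\Conv^{\geq b}\subset\Conv^{\geq a+b}$ via the behaviour of $\Delta_\t$ on $\overline{F}_\bullet\overline{\cC}$.
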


Using this Lemma we have the following statement.

\begin{prop}
Assume $\cO$ is an operad with a weak equivalence $\Omega\cC\stackrel{\sim}\rightarrow \cO$, where $\cC$ is a conilpotent (curved) cooperad. Then we have an equivalence of spaces
\begin{align*}
\Map_{\Op_k}(\cO, \cP)&\cong \underline{\MC}(\Conv(\cC, \cP)).
\end{align*}
\label{prop:convolutionla}
\end{prop}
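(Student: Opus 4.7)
Proof proposal. The plan is to use cofibrancy of $\Omega\cC$ to reduce to an explicit description of strict operad maps $\Omega\cC \to \cP$, and then to promote that description to a simplicial statement via the Sullivan forms $\Omega_\bullet$. Since $\Omega\cC$ is cofibrant by the proposition immediately preceding the lemma and $\cO$ is weakly equivalent to $\Omega\cC$, we may replace $\cO$ by $\Omega\cC$ in the mapping space, giving
\[
\Map_{\Op_k}(\cO, \cP) \simeq \Hom_{\Op_k, \bullet}(\Omega\cC, \cP \otimes \Omega_\bullet),
\]
where the right hand side is the simplicial set of strict operad maps into the simplicial resolution $\cP \otimes \Omega_\bullet$ of $\cP$.

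The core step is the set-level identification $\Hom_{\Op_k}(\Omega\cC, \cP) \cong \MC(\Conv(\cC, \cP))$. Since $\Omega\cC$ is freely generated as a graded operad by $\overline{\cC}[-1]$, an operad map $\phi \colon \Omega\cC \to \cP$ is the same datum as a map of symmetric sequences $\overline{\cC}[-1] \to \cP$, equivalently a degree $1$ element $\alpha$ of $\Conv(\cC, \cP)$. Imposing compatibility of $\phi$ with differentials and unfolding the cobar differential \eqref{eq:cobardifferential} term by term, the piece $-\s\d_1\s^{-1}X$ yields the internal differential $\d\alpha$ applied to $X$, while the shuffle sum $\sum_\t (\s\otimes \s)(\t, \Delta_\t\s^{-1}X)$, once pushed through $\alpha$ and composed in $\cP$, produces exactly $(\alpha \bullet \alpha)(X)$ by the explicit pre-Lie formula \eqref{eq:convprelie}. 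Since $\alpha$ has odd degree one has $[\alpha, \alpha] = 2\,\alpha \bullet \alpha$, and compatibility with the differential becomes precisely the Maurer--Cartan equation $\d\alpha + \frac{1}{2}[\alpha, \alpha] = 0$. In the curved case, the extra contribution to the differential on $\Omega\cC$ coming from the cocurvature $\theta \colon \cC \to \bu[2]$ reproduces the curvature element of $\Conv(\cC, \cP)$ defined just before the lemma, so one recovers the curved MC equation $\theta + \d\alpha + \frac{1}{2}[\alpha, \alpha] = 0$.

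Applying this identification levelwise to $\cP \otimes \Omega_n$, and using the obvious isomorphism $\Conv(\cC, \cP \otimes \Omega_n) \cong \Conv(\cC, \cP) \otimes \Omega_n$, we obtain
\[
\Hom_{\Op_k, \bullet}(\Omega\cC, \cP \otimes \Omega_\bullet) \cong \MC\bigl(\Conv(\cC, \cP) \otimes \Omega_\bullet\bigr).
\]
Pronilpotency of $\Conv(\cC, \cP)$, provided by Lemma \ref{lm:conilpotentadmissible} using conilpotency of $\cC$, endows the convolution Lie algebra with an admissible filtration whose quotients are nilpotent, so the right hand side is well defined and equals $\underline{\MC}(\Conv(\cC, \cP))$ by definition. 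Composing these equivalences yields the claim.

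The main obstacle is bookkeeping rather than conceptual: one has to check that the signs produced by the suspension $\s$ in \eqref{eq:cobardifferential} match the Koszul signs implicit in the pre-Lie product \eqref{eq:convprelie}, and in the curved setting one must verify that the cocurvature $\theta$ contributes exactly the curvature of $\Conv(\cC, \cP)$ rather than an auxiliary boundary or shift. Both are routine unwindings of the operadic sign conventions fixed earlier in the paper, and neither introduces new ideas beyond freeness of $\Omega\cC$.
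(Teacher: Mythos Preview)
Your proposal is correct and follows essentially the same route as the paper's own proof: replace $\cO$ by the cofibrant $\Omega\cC$, compute the mapping space as $\Hom_{\Op_k}(\Omega\cC, \cP\otimes\Omega_\bullet)$, identify operad maps out of $\Omega\cC$ with Maurer--Cartan elements of $\Conv(\cC,\cP)$ by freeness and the cobar differential, and conclude via $\Conv(\cC, \cP\otimes\Omega_\bullet)\cong \Conv(\cC,\cP)\otimes\Omega_\bullet$. Your explicit mention of pronilpotency via Lemma~\ref{lm:conilpotentadmissible} and the curved correction are welcome additions, but the argument is otherwise the same as the paper's.
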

\begin{proof}
Note that since $\cC$ is conilpotent, $\Omega\cC$ is cofibrant. We have a sequence of equivalences of spaces
\[\Map_{\Op_k}(\cO, \cP)\cong \Map_{\Op_k}(\Omega \cC, \cP)\cong \iHom_{\Op_k,\bullet}(\Omega\cC, \cP)\cong \Hom_{\Op_k}(\Omega\cC, \cP\otimes \Omega_\bullet).\]

An operad morphism $f\colon \Omega\cC\rightarrow \cP$ is uniquely specified by a degree 0 map of symmetric sequences $f_0\colon \overline{\cC}[-1]\rightarrow \cP$ satisfying the equation
\begin{align*}
\d (f_0(\s X)) &= f(\d \s X)\\
&= f\left(-\s\d X - \sum_{\t\in\pi_0(\Tree_2(n))}(\s\otimes \s)(\t, \Delta_\t(X))\right) \\
&= -f_0(\s\d X) - f\left(\sum_{\t\in\pi_0(\Tree_2(n))}(\s\otimes \s)(\t, \Delta_\t(X))\right)
\end{align*}
for any $\s X\in \cC_\circ(n)[-1]$. Since $f$ is a morphism of operads, the last term can also be written in terms of $f_0$, so we obtain
\[\d (f_0(\s X)) + f_0(\s\d X) + \sum_{\t\in\pi_0(\Tree_2(n))} \mu_\t((f_0\s\otimes f_0\s)\Delta_\t(X)) = 0.\]

Finally, identifying degree 0 maps $f_0\colon \overline{\cC}[-1]\rightarrow \cP$ with degree 1 maps $f_0\s\colon \overline{\cC}\rightarrow \cP$ we get exactly the Maurer--Cartan equation in $\Conv(\cC, \cP)$. Since the simplicial set of Maurer--Cartan elements in a dg Lie algebra $\g$ is defined to be the set of Maurer--Cartan elements in $\g\otimes\Omega_\bullet$ and $\Conv(\cC, \cP\otimes \Omega_\bullet)\cong \Conv(\cC, \cP)\otimes \Omega_\bullet$, we obtain an equivalence of spaces
\[\Map_{\Op_k}(\cO, \cP)\cong \underline{\MC}(\Conv(\cC, \cP)).\]
\end{proof}

We will also need a variant of the convolution algebra
\[\Conv^0(\cC, \cP)=\prod_n \Hom_{S_n}(\cC(n), \cP(n))\]
which does not use a coaugmentation on $\cC$.

The Lie bracket on $\Conv(\cC, \cP)$ extends to one on $\Conv^0(\cC, \cP)$. Note that in contrast to $\Conv(\cC, \cP)$, the Lie algebra $\Conv^0(\cC, \cP)$ is not pronilpotent.

\subsection{Harrison complex}

Suppose $A$ is a commutative algebra in $M$. We will begin by constructing a canonical resolution of $A$, i.e. a graded mixed commutative algebra $A^\epsilon$, free as a graded commutative algebra, together with a weak equivalence $|A^\epsilon|^l\stackrel{\sim}\rightarrow A$. For this we will use the canonical cobar-bar resolution, the reader is referred to \cite[Section 11.2]{LV} for details.

Let $\coLie^\theta$ be the cooperad of curved Lie coalgebras. Then as in \cite[Section 6.1]{HM} one can construct a weak equivalence of operads
\[\Omega(\coLie^\theta\{1\})\rightarrow \Comm.\]

The cooperad $\coLie^\theta$ admits a weight grading where we set the weights of comultiplication and curving to be $-1$. Under this grading $\coLie^\theta$ is concentrated in non-positive weights. Similarly, $\Comm$ has a filtration by the number of generating operations. Moreover, the morphism $\Omega(\coLie^\theta\{1\})\rightarrow \Comm$ is compatible with the filtrations.

We define $A^\epsilon = \Sym(\coLie^\theta\{1\}(A)) = \Comm\circ \coLie^\theta\{1\}\circ A$ as a commutative algebra, where $\circ$ is the composition product of symmetric sequences with $A$ considered as a symmetric sequence in arity 0. We define the grading on $A^\epsilon$ using the grading on $\coLie^\theta\{1\}$. The mixed structure on $A^\epsilon$ consists of two terms and coincides with the differential on the cobar-bar resolution for which we refer to \cite[Section 5.2]{HM}.

The counit morphism $\coLie^\theta\{1\}\rightarrow \bu$ induces a morphism
\[\coLie^\theta\{1\}(A)\rightarrow A\]
which defines a morphism of graded commutative algebras
\[\Sym(\coLie^\theta\{1\}(A))\rightarrow A.\]
It is easy to see that it is compatible with the mixed structure, hence one obtains a morphism
\[f\colon |A^\epsilon|^l\rightarrow A.\]

The following statement is \cite[Theorem 11.3.6]{LV} and \cite[Theorem 15]{LG}:
\begin{prop}
Let $A$ be a commutative algebra in $M$ cofibrant as an object of $M$. Then the morphism $f\colon |A^\epsilon|^l\rightarrow A$ is a weak equivalence.
\label{prop:cobarbarwe}
\end{prop}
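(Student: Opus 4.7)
The plan is to interpret the result as the cobar-bar equivalence for the Koszul pair $(\Comm, \coLie^\theta\{1\})$, where the curving $\theta$ is precisely what forces the construction to land in the non-unital world and is compensated for by taking the left realization. Unwinding definitions, the underlying object of $|A^\epsilon|^l$ is
\[|A^\epsilon|^l = \bigoplus_{n \leq 0}\bigl(\Sym(\coLie^\theta\{1\}(A))\bigr)(n)\]
with differential equal to the internal differential of $A^\epsilon$ twisted by the mixed structure, which itself decomposes into a ``Koszul'' piece induced by the cooperadic cobracket of $\coLie^\theta\{1\}$ and a ``curving'' piece induced by $\theta$. The morphism $f$ is the projection onto the weight-zero summand, which consists precisely of the $\coLie^\theta\{1\}$-arity-one component (i.e.\ a single copy of $A$).

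The core technical step is to introduce on $A^\epsilon$ the descending filtration by the number of $\coLie^\theta\{1\}$-letters, i.e.\ by the tensor degree of $\coLie^\theta\{1\}(A)$ inside $\Sym$. This filtration is compatible with both pieces of the twisted differential. On the associated graded, the curving contribution $\theta$ drops out because it strictly decreases the $\coLie^\theta\{1\}$-arity while preserving the $\Sym$-degree, so it shifts filtration. What remains is the pure Koszul differential coming from the operadic twisting morphism $\alpha\colon \coLie^\theta\{1\} \to \Comm$. By the standard Koszul duality statement for $\Comm$ (together with its curved enhancement, as in \cite[\S 6.1]{HM}), the associated graded complex is quasi-isomorphic to $A$ concentrated in filtration degree one, since the composite product $\Comm \circ_\alpha \coLie^\theta\{1\}$ with its Koszul differential is a resolution of the unit symmetric sequence.

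To conclude, one argues that the spectral sequence associated with this filtration converges to $\rH_\ast(|A^\epsilon|^l)$, and that the $E^1$-page identifies $f$ with the projection onto the filtration-degree-one part, i.e.\ with a quasi-isomorphism onto $A$. Convergence is where the assumption that $A$ is cofibrant in $M$ and the axioms on $M$ from Section~\ref{sect:modelcats} (in particular that tensoring with a cofibrant object preserves weak equivalences and that filtered colimits preserve weak equivalences) enter: they guarantee that the filtration is levelwise well-behaved and that the relevant quotients compute what we want in $\cM$.

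The main obstacle, and the reason this is not completely formal once \cite[Theorem~11.3.6]{LV} is cited, is precisely tracking the interaction between the left realization $|-|^l$ and the filtration: the left realization involves an infinite direct sum over non-positive weights, so one must check that the filtration is exhaustive on each weight component and that the associated-graded spectral sequence converges. This is handled by observing that at each fixed weight, only finitely many terms of $A^\epsilon$ contribute, so the spectral sequence degenerates at a finite page and convergence is automatic. The Koszul input reduces, via Proposition~\ref{prop:leftrealizationweq} applied componentwise, to the classical statement that $\Omega(\coLie^\theta\{1\}) \to \Comm$ is a quasi-isomorphism of operads.
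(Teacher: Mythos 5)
The paper does not actually prove this proposition: it is quoted directly from \cite[Theorem 11.3.6]{LV} and \cite[Theorem 15]{LG}, so what you have written is a proof of the cited cobar--bar resolution theorem itself. Your overall strategy --- filter the twisted complex, identify the associated graded with the Koszul complex $(\Comm\circ_\alpha\coLie^\theta\{1\})(A)$, invoke the acyclicity of $\Comm\circ_\alpha\coLie^\theta\{1\}$, and then check convergence --- is exactly the standard proof of that theorem, so the shape of the argument is right.

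However, the filtration you specify does not do what you claim. Write a homogeneous element of $A^\epsilon=\Sym(\coLie^\theta\{1\}(A))$ as a product of $k$ factors involving $N$ inputs from $A$ in total. Besides the internal differential of $A$, the twisted differential has a ``bar'' part using the product of $A$, which preserves $k$ and decreases $N$, and a ``Koszul/cobar'' part induced by the twisting morphism, which increases $k$ and preserves $N$. Filtering by the $\Sym$-degree $k$, as you propose, therefore kills the Koszul part on the associated graded and \emph{retains} the bar part: the resulting page is $\Sym$ applied to the homology of the bar construction $\coLie^\theta\{1\}(A)$, which is not $A$, and the argument does not close. The filtration you need is by the total number $N$ of $A$-inputs; this is also the only reading under which your sentence about the curving ``shifting filtration because it strictly decreases the arity'' makes sense --- as written it contradicts your own definition of the filtration, since you say in the same breath that the curving preserves the $\Sym$-degree. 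A second, smaller, flaw is the convergence argument: it is false that only finitely many terms of $A^\epsilon$ contribute in a fixed weight (the weight-zero component is all of $\Sym(A)$, with every symmetric power present), so the spectral sequence does not degenerate for finiteness reasons. Convergence instead follows because the filtration by $N$ is exhaustive and bounded below within each weight component, and $|-|^l$ is a direct sum over weights. With these two corrections the proof goes through, with the cofibrancy of $A$ entering exactly where you say it does, namely to guarantee that applying $-\circ A$ to the acyclic Koszul complex still yields a weak equivalence.
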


\begin{lm}
Let $A$ be a commutative algebra in $M$ cofibrant as an object of $M$. Then $|A^\epsilon|^l$ is a cofibrant commutative algebra.
\label{lm:cobarbarcofibrant}
\end{lm}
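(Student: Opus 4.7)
The plan is to exhibit $|A^\epsilon|^l$ as a quasi-free commutative algebra on a cofibrant generating object of $M$, and then use an exhaustive filtration coming from the arity grading on $\coLie^\theta\{1\}$ to present it as a cell object in $\CAlg(M)$.

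As a graded commutative algebra $A^\epsilon = \Sym(\coLie^\theta\{1\}(A))$ lies in $M^{\leq 0,\epsilon}$, so by the Proposition on the strong monoidality of $|-|^l$ on non-positively weighted graded mixed objects we obtain an isomorphism of graded commutative algebras
\[|A^\epsilon|^l \;\cong\; \Sym\bigl(|\coLie^\theta\{1\}(A)|^l\bigr),\]
with the differential transported from the cobar-bar mixed structure on $A^\epsilon$. Thus $|A^\epsilon|^l$ is quasi-free on the generator $W := |\coLie^\theta\{1\}(A)|^l$. To check that $W$ is cofibrant in $M$, note that $|-|^l\colon M^{gr,\epsilon}\to M$ is a left Quillen functor, being left adjoint to the componentwise-fibration-preserving functor $\triv$; hence it suffices to check that $\coLie^\theta\{1\}(A)$ is cofibrant in $M^{gr,\epsilon}$. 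The latter is a direct sum of terms $(\coLie^\theta\{1\}(n)\otimes A^{\otimes n})^{S_n}$ which, in characteristic zero and with $A$ cofibrant, are cofibrant in $M$ (and hence in $M^{gr,\epsilon}$ placed in their respective weight), using semisimplicity of $k[S_n]$ and the assumptions on $M$.

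To obtain the cell structure, I would filter $\coLie^\theta\{1\}$ by arity: for $\coLie^\theta_{\leq n}\{1\}$ the subsymmetric sequence of arity $\leq n$, set $W_n := |\coLie^\theta_{\leq n}\{1\}(A)|^l \subset W$. Each inclusion $W_{n-1}\hookrightarrow W_n$ is a cofibration in $M$ between cofibrant objects with cofibrant quotient. The crucial point is that the cobar-bar differential is compatible with this filtration: the bar component decomposes an arity-$n$ generator via the Lie cocomposition into products of generators of strictly smaller arities in $\Sym^{\geq 2}(W_{n-1})$, and the curvature $\theta$ also lowers arity. Therefore the $\Sym(W_n)\subset |A^\epsilon|^l$ form an exhaustive sequence of sub-cdgas, and each inclusion $\Sym(W_{n-1})\hookrightarrow \Sym(W_n)$ is a pushout along a free commutative algebra extension on the cofibration $W_{n-1}\hookrightarrow W_n$. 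This presents $|A^\epsilon|^l$ as a transfinite composition of pushouts of generating cofibrations in $\CAlg(M)$, hence as a cofibrant commutative algebra.

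The main obstacle is the careful verification that the cobar-bar differential (combining the bar piece from the cooperad cocomposition and the curvature contribution from $\theta$) is strictly compatible with the arity filtration, so that each $\Sym(W_n)$ is a genuine sub-cdga rather than merely a subcomplex. Once this compatibility is unpacked from the explicit formulas for the differential on the cobar complex of a curved cooperad given in \cite{HM}, cofibrancy is a formal consequence of the tractability of $M$ and the fact that $\Sym$ sends cofibrations between cofibrant objects to cofibrations of commutative algebras.
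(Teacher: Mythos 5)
Your argument follows essentially the same route as the paper's: filter $A^\epsilon$ by arity (equivalently, by the weight of the generators $\coLie^\theta\{1\}(A)$), observe that the cobar--bar mixed structure strictly lowers arity and hence respects the filtration, and present $|A^\epsilon|^l$ as the colimit of a sequence of free commutative-algebra extensions along cofibrations between cofibrant objects of $M$, the cofibrancy of the generating pieces $(\coLie^\theta\{1\}(m)\otimes A^{\otimes m})^{S_m}$ coming from cofibrancy of $A$ together with characteristic zero (so that the $S_m$-invariants are a retract of $\coLie^\theta\{1\}(m)\otimes A^{\otimes m}$). This is exactly the paper's proof, written out in slightly more detail.

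One intermediate claim is wrong as stated, although it is not load-bearing. While $|-|^l$ is indeed left Quillen (its right adjoint $\triv$ preserves the componentwise fibrations and weak equivalences), the object $\coLie^\theta\{1\}(A)$ is \emph{not} cofibrant in the projective model structure on $M^{gr,\epsilon}$, so you cannot conclude cofibrancy of $W=|\coLie^\theta\{1\}(A)|^l$ this way. Componentwise cofibrancy does not imply cofibrancy in $M^{gr,\epsilon}$: a cofibrant graded mixed object must be a retract of something semi-free over $k[\epsilon]$, which is precisely why the paper replaces $\bu_M(0)$ by $\tilde{k}\otimes\bu_M$ before computing realizations; in particular the parenthetical ``cofibrant in $M$, hence in $M^{gr,\epsilon}$ placed in its weight'' is false already for $\bu_M(0)$. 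Fortunately your subsequent filtration argument establishes directly that each $W_{n-1}\hookrightarrow W_n$ is a cofibration in $M$ with cofibrant cokernel, which is all the cell-attachment argument needs; deleting the left Quillen detour leaves a correct proof that coincides with the one in the paper.
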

\begin{proof}
Consider a filtration on the resolution $A^\epsilon$ as follows. Let $A^\epsilon_n$ be the symmetric algebra on $\coLie^\theta\{1\}(A)$ in weights at least $-n$; the mixed structure on $A^\epsilon$ restricts to one on $A^\epsilon_n$. In particular, $A^\epsilon_0 = \Sym(A)$ with the trivial mixed structure.

Since $A$ is cofibrant as an object of $M$, the object
\[\bigoplus_{0\leq m\leq n} (\coLie^\theta\{1\}(m)\otimes A^{\otimes m})^{S_m}\]
is cofibrant as well. $|A^\epsilon|^l$ is given as the colimit of the direct system
\[|A^\epsilon_0|^l\rightarrow |A^\epsilon_1|^l\rightarrow \dots,\]
but each arrow is a cofibration of commutative algebras and hence $|A^\epsilon|^l\in\CAlg(M)$ is cofibrant.
\end{proof}

\begin{lm}
Let $B\in \CAlg(M^{\leq 0, \epsilon})$ be a graded mixed commutative algebra in $M$ concentrated in non-positive weights. We have an isomorphism
\[|\Omega^1_B|^l\cong \Omega^1_{|B|^l}\]
of $|B|^l$-modules, where on the left we use the functor
\[|-|^l\colon \mod_B(M^{gr, \epsilon})\rightarrow \mod_{|B|^l}(M).\]
\label{lm:kahlerrealization}
\end{lm}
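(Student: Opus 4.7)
The plan is to exploit two properties of the left realization functor established earlier: $|-|^l$ is strongly symmetric monoidal when restricted to the subcategory $M^{\leq 0, \epsilon}$ of graded mixed objects concentrated in non-positive weights, and it is a left adjoint, hence preserves colimits. In particular, $|-|^l$ induces a functor on commutative algebras in non-positive weights, and by strong monoidality, for any $N \in \mod_B(M^{\leq 0, \epsilon})$ the realization $|N|^l$ carries a natural $|B|^l$-module structure inherited from that of $N$.

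First, I would build the comparison map. The universal derivation $d\colon B\to \Omega^1_B$ is a morphism in $M^{\leq 0, \epsilon}$, so applying $|-|^l$ yields $|d|^l\colon |B|^l\to |\Omega^1_B|^l$ in $M$. The Leibniz rule satisfied by $d$ is a commutative diagram involving only $\mu_B$, the symmetry of $M$, and the $B$-action on $\Omega^1_B$, all of which are transported by the strong monoidal functor $|-|^l$; hence $|d|^l$ is a derivation of $|B|^l$ with values in $|\Omega^1_B|^l$. The universal property of Kähler differentials then produces a canonical $|B|^l$-linear map
\[
\phi\colon \Omega^1_{|B|^l}\longrightarrow |\Omega^1_B|^l.
\]

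To prove $\phi$ is an isomorphism, I would use the description $\Omega^1_A\cong I_A/I_A^2$ where $I_A=\ker(\mu_A\colon A\otimes A\to A)$. Because $\mu_A$ admits the canonical section $\id\otimes \eta_A$, the kernel $I_A$ is a direct summand of $A\otimes A$, and retracts are preserved by any functor; applied to $B$ together with strong monoidality this gives $|I_B|^l\cong I_{|B|^l}$ as a direct summand of $|B|^l\otimes |B|^l$. The subobject $I_B^2\subseteq I_B$, being the image of the multiplication $I_B\otimes I_B\to B\otimes B$, is likewise preserved since it is expressible via tensor products, retracts and images. Finally, $|-|^l$ preserves the quotient $I_B/I_B^2$ as a left adjoint, yielding
\[
|\Omega^1_B|^l \cong |I_B|^l/|I_B^2|^l \cong I_{|B|^l}/I_{|B|^l}^2 \cong \Omega^1_{|B|^l},
\]
and tracking the universal derivation through this chain of isomorphisms identifies the composite with $\phi$.

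The main obstacle is the bookkeeping needed to confirm that the $|B|^l$-module structure on $|\Omega^1_B|^l$ transported through $|-|^l$ agrees with the one arising from the quotient $I_{|B|^l}/I_{|B|^l}^2$. This amounts to observing that the $B$-action on $\Omega^1_B = I_B/I_B^2$ is defined purely diagrammatically in terms of $\mu_B$ and the symmetry of $M$, both of which are transported faithfully by the strong monoidal functor $|-|^l$.
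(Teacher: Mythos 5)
Your first step --- applying $|-|^l$ to the universal derivation, using strong monoidality on non-positive weights to see that $|d|^l$ is again a derivation, and invoking the universal property to get $\phi\colon \Omega^1_{|B|^l}\rightarrow |\Omega^1_B|^l$ --- is fine and is essentially the comparison map the paper has in mind. The gap is in the second step, where you try to show $\phi$ is an isomorphism via the presentation $\Omega^1_A\cong I_A/I_A^2$. Two things go wrong there. First, the claim that $I_B^2$ is ``preserved since it is expressible via tensor products, retracts and images'' begs the question for the image: a left adjoint preserves colimits (cokernels, coequalizers), but it does \emph{not} in general preserve monomorphisms or epi--mono factorizations, so ``$|-|^l$ preserves images'' is exactly what would need proof. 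Second, the identification $\Omega^1_A\cong I_A/I_A^2$ is a fact about commutative rings; in the paper's ambient category $M$ --- an arbitrary closed symmetric monoidal combinatorial $\dg_k$-model category, not assumed abelian --- neither the existence of well-behaved images nor this presentation of the K\"ahler differentials is available for free. (For $M=\dg_k$ one can rescue your argument by noting that $|-|^l$ is componentwise a direct sum on underlying graded objects, hence exact, but that is not the generality the lemma is stated in.)

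The paper avoids all of this by writing $\Omega^1_A$ as a \emph{coequalizer} of free $A$-modules,
\[\Omega^1_A=\coeq\bigl(A\otimes \Sym_2(A) \rightrightarrows A\otimes A\bigr),\]
with the two maps encoding the Leibniz rule. Every ingredient of this presentation is a tensor product or a colimit, so it is transported by $|-|^l$ using only the two properties you already isolated (cocontinuity and strong monoidality on $M^{\leq 0,\epsilon}$); no kernels or images appear. Replacing your $I_B/I_B^2$ step with such a colimit presentation closes the gap and recovers the paper's proof.
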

\begin{proof}
Recall the following explicit construction of the module of K\"{a}hler differentials. Let $A$ be a commutative algebra in $M$.

Denote by $i\colon \Sym_2(A)\rightarrow A\otimes A$ the space of $S_2$-invariants. We denote $m_s\colon \Sym_2(A)\rightarrow A$ the multiplication map given by $m_s = \frac{1}{2}m\circ i$. The module of K\"{a}hler differentials $\Omega^1_A\in \mod_A(M)$ is defined to be the coequalizer of $A$-modules
\[\Omega^1_A=\coeq\left(
\xymatrixcolsep{5pc}\xymatrix{
A\otimes \Sym_2(A) \ar@<.5ex>^-{(m\otimes \id)\circ(\id\otimes i)}[r] \ar@<-.5ex>_-{\id\otimes m_s}[r] & A\otimes A
}
\right),
\]
where $A\otimes \Sym_2(A)$ and $A\otimes A$ are the free $A$-modules on $\Sym_2(A)$ and $A$ respectively.

Since $|-|^l$ is a left adjoint, it preserves colimits and therefore the claim follows from the explicit description of the module of K\"{a}hler differentials given above.
\end{proof}

We are now going to define the Harrison chain and cochain complexes for a commutative algebra $A\in\CAlg(M)$ which will be certain graded mixed $A$-modules whose realizations represent the cotangent and tangent complex respectively.

Define
\[\Harr_\bullet(A, A) = A\otimes \coLie\{1\}(A)\cong \Omega^1_{A^\epsilon}\otimes_{A^\epsilon} A \in \mod_A(M^{gr, \epsilon})\]
with the mixed structure coming from the one on $\Omega^1_{A^\epsilon}$. By construction we have a morphism of graded mixed $A$-modules
\[\Harr_\bullet(A, A)\rightarrow \Omega^1_A\]
given by
\[f\otimes g\mapsto f\ddr g\]
in weight $0$, where we consider the trivial graded mixed structure on $\Omega^1_A$.

\begin{prop}
Suppose $A\in\CAlg(M)$ is a cofibrant commutative algebra in $M$. Then the morphism
\[\Harr_\bullet(A, A)\rightarrow \Omega^1_A\]
induces a weak equivalence
\[|\Harr_\bullet(A, A)|^l\rightarrow \Omega^1_A\]
of $A$-modules.
\label{prop:harrisoncotangent}
\end{prop}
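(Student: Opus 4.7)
The approach is to recognize $|\Harr_\bullet(A,A)|^l$ as the module of K\"ahler differentials of the cobar-bar resolution $|A^\epsilon|^l\to A$ base-changed to $A$, and then to invoke the standard homotopy invariance of $\Omega^1$ for cofibrant cdgas.

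First, by definition $\Harr_\bullet(A,A) = \Omega^1_{A^\epsilon}\otimes_{A^\epsilon} A$, where $A$ is regarded as a graded mixed $A^\epsilon$-algebra concentrated in weight zero via the counit $A^\epsilon\to A$. Both $\Omega^1_{A^\epsilon}$ and $A$ then lie in non-positive weights. Since $|-|^l$ is a left adjoint it preserves all colimits, and by the preceding Proposition it is strong symmetric monoidal on the full subcategory of $\cM^{gr,\epsilon}$ of objects concentrated in non-positive weights. Presenting the relative tensor product as a reflexive coequalizer of binary tensor products yields
\[|\Harr_\bullet(A,A)|^l \;\cong\; |\Omega^1_{A^\epsilon}|^l \otimes_{|A^\epsilon|^l} |A|^l \;\cong\; \Omega^1_{|A^\epsilon|^l} \otimes_{|A^\epsilon|^l} A,\]
where the second isomorphism uses Lemma \ref{lm:kahlerrealization} together with the identification $|A|^l = A$, which holds because $A$ sits in weight zero with trivial mixed structure.

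Next, Proposition \ref{prop:cobarbarwe} gives that the counit $f\colon |A^\epsilon|^l\to A$ is a weak equivalence of commutative algebras in $M$, and Lemma \ref{lm:cobarbarcofibrant} says that $|A^\epsilon|^l$ is cofibrant in $\CAlg(M)$. Since $A$ is cofibrant by hypothesis, $f$ is a weak equivalence between cofibrant cdgas. The standard homotopy invariance of $\Omega^1$, namely the fact that $\Omega^1$ models the cotangent complex on cofibrant cdgas and the functor of base change preserves weak equivalences between cofibrant objects, then produces a weak equivalence $\Omega^1_{|A^\epsilon|^l}\otimes_{|A^\epsilon|^l} A \stackrel{\sim}\to \Omega^1_A$ of $A$-modules. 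Naturality of all constructions identifies this composite with the left realization of the canonical map $\Harr_\bullet(A,A)\to \Omega^1_A$ of graded mixed $A$-modules, proving the claim.

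The main obstacle I anticipate is the commutation of $|-|^l$ with the relative tensor product over the nontrivial graded mixed algebra $A^\epsilon$: the cited Proposition only states strong monoidality for binary tensor products, so one must check carefully that all relevant objects remain in non-positive weights and that the reflexive coequalizer presentation of $\otimes_{A^\epsilon}$ is genuinely preserved by $|-|^l$. Once this is settled, the remainder is a direct application of standard facts about the cotangent complex in the dg setting.
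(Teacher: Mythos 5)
Your proof is correct and follows essentially the same route as the paper: identify $|\Harr_\bullet(A,A)|^l$ with $\Omega^1_{|A^\epsilon|^l}\otimes_{|A^\epsilon|^l} A$ via Lemma \ref{lm:kahlerrealization} and the (co)monoidality of $|-|^l$, then conclude from Proposition \ref{prop:cobarbarwe} and Lemma \ref{lm:cobarbarcofibrant} that the induced map to $\Omega^1_A$ is a weak equivalence. The paper is equally terse about the commutation of $|-|^l$ with the relative tensor product, so your flagged concern is not a divergence from the published argument.
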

\begin{proof}
By \cite[Proposition A.1.4]{CPTVV} the forgetful functor $\CAlg(M)\rightarrow M$ preserves cofibrant objects, so $A$ is cofibrant as an object of $M$.

By Proposition \ref{prop:cobarbarwe} and Lemma \ref{lm:cobarbarcofibrant} the morphism of $A$-modules
\[\Omega^1_{|A^\epsilon|^l}\otimes_{|A^\epsilon|^l} A\rightarrow \Omega^1_A\]
is a weak equivalence.

By Lemma \ref{lm:kahlerrealization} we get an isomorphism
\[\Omega^1_{|A^\epsilon|^l}\otimes_{|A^\epsilon|^l} A\cong |\Omega^1_{A^\epsilon}|^l\otimes_{|A^\epsilon|^l} A.\]

But since $|-|^l$ preserves colimits and is monoidal,
\[|\Omega^1_{A^\epsilon}|^l\otimes_{|A^\epsilon|^l} A\cong |\Omega^1_{A^\epsilon}\otimes_{A^\epsilon} A|^l=\Harr_\bullet(A, A).\]
\end{proof}

Let us now introduce the Harrison cochain complex. Let
\[\Harr^\bullet(A, A) = \iHom_{\mod_A(M^{gr, \epsilon})}(\Harr_\bullet(A, A), A)\cong \iHom_M(\coLie^\theta\{1\}(A), A).\]

As graded objects we can identify
\[\Harr^\bullet(A, A)\cong \Conv(\coLie^\theta\{1\}; A)(-1).\]

The multiplication and unit on $A$ defines an element $m\in\Conv(\coLie^\theta\{1\}; A)$ satisfying the curved Maurer--Cartan equation. It is not difficult to check that the mixed structure on $\Harr^\bullet(A, A)$ is given by $[m, -]$.

Let $\Der^{int}(A, A)\in \alg_{\Lie}(M)$ be the Lie algebra of derivations of $A$. Consider the morphism of graded objects
\[\Der^{int}(A, A)\rightarrow \Harr^\bullet(A, A)\]
induced from the morphism $\Der^{int}(A, A)\rightarrow \iHom_M(A, A)$.

\begin{prop}
Suppose $A\in\CAlg(M)$ is a fibrant and cofibrant commutative algebra in $M$. Then the morphism
\[\Der^{int}(A, A)\rightarrow \Harr^\bullet(A, A)\]
induces a weak equivalence of Lie algebras in $M$
\[\Der^{int}(A, A)\rightarrow |\Harr^\bullet(A, A)|.\]
\label{prop:harrisontangent}
\end{prop}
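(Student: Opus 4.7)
The argument mirrors the proof of Proposition~\ref{prop:harrisoncotangent}, with the left realization of K\"ahler differentials replaced by the right realization of an internal Hom into $A$. The strategy is to identify $|\Harr^\bullet(A,A)|$ with the space of derivations of the cobar--bar resolution $A^\epsilon$ into $A$, and then transfer along the weak equivalence $f\colon |A^\epsilon|^l \xrightarrow{\sim} A$ of Proposition~\ref{prop:cobarbarwe} to recover $\Der^{int}(A,A)$.

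Concretely, I would first apply tensor--hom adjunction to the identity $\Harr_\bullet(A,A) = \Omega^1_{A^\epsilon} \otimes_{A^\epsilon} \triv(A)$, with $\triv(A)$ regarded as an $A^\epsilon$-module via the algebra map $A^\epsilon \to \triv(A)$ adjoint to $f$, to obtain
\[\Harr^\bullet(A,A) \cong \iHom_{\mod_{A^\epsilon}(M^{gr,\epsilon})}(\Omega^1_{A^\epsilon}, \triv(A)).\]
Next apply $|-|$ and use the adjunction $|-|^l \dashv \triv$ in its $M$-enriched module-theoretic form, combined with Lemma~\ref{lm:kahlerrealization}, which yields $|\Omega^1_{A^\epsilon}|^l \cong \Omega^1_{|A^\epsilon|^l}$, to get
\[|\Harr^\bullet(A,A)| \cong \iHom_{\mod_{|A^\epsilon|^l}(M)}(\Omega^1_{|A^\epsilon|^l}, A) = \Der^{int}(|A^\epsilon|^l, A).\]
By Proposition~\ref{prop:cobarbarwe} and Lemma~\ref{lm:cobarbarcofibrant} the map $f$ is a weak equivalence between cofibrant commutative algebras, while $A$ is fibrant as a module over itself, so restriction of scalars along $f$ produces a weak equivalence $\Der^{int}(A,A) \xrightarrow{\sim} \Der^{int}(|A^\epsilon|^l, A)$. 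Composing the three identifications gives the desired weak equivalence, and a direct unwinding confirms that it coincides with the morphism of the statement.

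The Lie algebra compatibility is verified by noting that the commutator bracket on derivations corresponds under the identifications above to the convolution pre-Lie bracket~\eqref{eq:convprelie}, via the standard Koszul-duality dictionary between convolution algebras over Lie cooperads and derivations of Koszul dual $\Sym$-algebras. The main obstacle is the enriched adjunction identity used in the second step: while it is a routine consequence of $|-|^l \dashv \triv$ at the level of underlying Hom-sets, upgrading it to an isomorphism of $M$-enriched mapping spaces at the level of strict models requires the strong monoidality of $|-|^l$ on non-positively weighted objects, which applies here because $\Omega^1_{A^\epsilon} = A^\epsilon \otimes \coLie^\theta\{1\}(A)$ is $A^\epsilon$-free on an object concentrated in non-positive weights.
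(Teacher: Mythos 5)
Your argument is correct and is essentially the paper's proof: both reduce the statement to Proposition \ref{prop:harrisoncotangent} (equivalently, Proposition \ref{prop:cobarbarwe}, Lemma \ref{lm:cobarbarcofibrant} and Lemma \ref{lm:kahlerrealization}) by dualizing into the fibrant object $A$ and commuting the internal Hom past the realization via the enriched $|-|^l\dashv\triv$ adjunction. The only difference is bookkeeping — you Hom over $A^\epsilon$ before realizing, whereas the paper realizes $\Harr_\bullet(A,A)$ first and then applies $\iHom_{\mod_A(M)}(-,A)$ — and your identification of the bracket with composition in weight $0$ of the convolution algebra matches the paper's.
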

\begin{proof}
The compatibility with the Lie brackets is obvious since the weight $0$ part of $\Harr^\bullet(A, A)$ is $\iHom_M(A, A)$ and the pre-Lie structure \eqref{eq:convprelie} restricts to the composition of endomorphisms.

The morphism $\Harr_\bullet(A, A)\rightarrow \Omega^1_A$ induces the morphism
\[\Der^{int}(A, A)\rightarrow\Harr^\bullet(A, A)\]
after applying $\iHom_{\mod_A(M)}(-, A)$. Since $A$ is fibrant, the functor $\iHom_{\mod_A(M)}(-, A)$ preserves weak equivalences between cofibrant objects. By construction $|\Harr_\bullet(A, A)|^l$ is a semi-free $A$-module, hence it is cofibrant. Since $A$ is cofibrant, $\Omega^1_A$ is also a cofibrant $A$-module. Therefore,
\[\Der^{int}(A, A)\rightarrow \iHom_{\mod_A(M)}(|\Harr_\bullet(A, A)|^l, A)\]
is a weak equivalence by Proposition \ref{prop:harrisoncotangent}. The statement follows by observing that the natural morphism
\[\iHom_{\mod_A(M)}(|\Harr_\bullet(A, A)|^l, A)\rightarrow |\iHom_{\mod_A(M^{gr, \epsilon})}(\Harr_\bullet(A, A), A)| = |\Harr^\bullet(A, A)|\]
is an isomorphism.
\end{proof}

\section{Brace construction}
\label{sect:braceconstruction}

\subsection{Braces}
\label{sect:braces}

Let $\cC$ be a cooperad. In Section \ref{sect:deformationalgebras} we have shown how to make the convolution algebras $\Conv(\cC; A)$ and $\Conv^0(\cC; A)$ into pre-Lie algebras for any complex $A$. In this section we introduce two important generalizations of this construction.

Recall from \cite[Definition 1.12]{Sa2} the following notion. Given a symmetric sequence $\cC$ we define $\cC^{\cu}$ to coincide with $\cC$ in arities at least 1 and $\cC^{\cu}(0)=\cC(0)\oplus k$ in arity 0.

\begin{defn}
A \emph{Hopf counital structure} on a (curved) cooperad $\cC$ is the structure of a (curved) Hopf cooperad on $\cC^{\cu}$ such that the natural projection $\cC^{\cu}\rightarrow \cC$ is a morphism of (curved) cooperads and such that the unit of the Hopf structure on $\cC^{\cu}(0)=\cC(0)\oplus k$ is given by inclusion into the second factor.
\end{defn}

\begin{remark}
If $\cC$ is a coaugmented cooperad, $\cC^{\cu}$ will not in general inherit the coaugmentation.
\end{remark}

Calaque and Willwacher extend the action of the pre-Lie operad $\preLie$ on $\Conv^0(\cC^{\cu}; A)$ to an action of the operad $\preLie_\cC$ whose definition we will now sketch. We refer to \cite[Section 3.1]{CW} for details.

Recall that the operations in the pre-Lie operad $\preLie$ are parametrized by rooted trees with numbered vertices. The operadic composition $\t_1\circ_m \t_2$ is given by grafting the root of the tree $\t_2$ into the $m$-th vertex of $\t_1$. Now suppose $\cC$ is a cooperad with a Hopf counital structure. The operad $\preLie_\cC$ has operations parametrized by rooted trees where each vertex is labeled by an operation of $\cC^{\cu}$ whose arity is equal to the number of incoming edges at the given vertex. The operadic composition is again given by grafting trees and it uses the Hopf cooperad structure on $\cC^{\cu}$. The action of $\preLie_{\cC}$ on $\Conv^0(\cC^{\cu}; A)$ is essentially the same as the pre-Lie structure on $\Conv^0(\cC^{\cu}; A)$ and uses the Hopf structure on $\cC^{\cu}$ to multiply the label on the rooted tree by the label in the convolution algebra.

Given a Maurer--Cartan element $f$ in a dg Lie algebra, the same Lie algebra with the differential $\d + [f, -]$ is still a dg Lie algebra. This is no longer true for pre-Lie algebras and one has to twist the operad. Using the general notion of twisting introduced in \cite{DW1} one can construct the brace operad $\Br_\cC$ which has operations parametrized by rooted trees where ``external'' vertices are labeled by elements of $\cC^{\cu}$ and the rest of the vertices, ``internal'' vertices, are labeled by elements of $\overline{\cC}[-1]$. In the pictures we will draw, external vertices are colored white and internal vertices are colored black. The generating operations of $\Br_\cC$ are shown in Figure \ref{fig:brgenerating}, where the root is labeled by an element of $\cC$. We refer to \cite[Section 9]{DW1} for an explicit description of the differential on $\Br_{\cC}$, but roughly it is obtained by the sum over all vertices of the following terms:
\begin{itemize}
\item If the vertex is external, we replace it by the first expression shown in Figure \ref{fig:brdifferential} and apply the composition in the pre-Lie operad.

\item If the vertex is internal, we replace it by the second expression shown in Figure \ref{fig:brdifferential} and apply the composition in the pre-Lie operad.

\item We discard all trees which have internal vertices with fewer than 2 children.
\end{itemize}

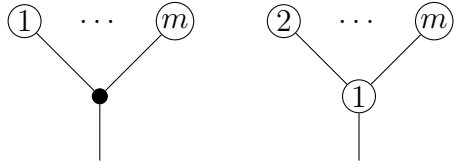
\begin{figure}
\begin{tikzpicture}
\node[b] (v0) at (0, 0) {};
\node (v) at (0, -1) {};
\node[w] (v1) at (-1, 1) {$1$};
\node at (0, 1) {\dots};
\node[w] (v2) at (1, 1) {$m$};
\draw (v0) edge (v);
\draw (v0) edge (v1);
\draw (v0) edge (v2);
\end{tikzpicture}
\qquad
\begin{tikzpicture}
\node[w] (v0) at (0, 0) {$1$};
\node (v) at (0, -1) {};
\node[w] (v1) at (-1, 1) {$2$};
\node at (0, 1) {\dots};
\node[w] (v2) at (1, 1) {$m$};
\draw (v0) edge (v);
\draw (v0) edge (v1);
\draw (v0) edge (v2);
\end{tikzpicture}
\caption{Generating operations of $\Br_\cC$.}
\label{fig:brgenerating}
\end{figure}

\begin{figure}
\begin{tikzpicture}
\node[b] (v0l) at (-0.5, 0) {};
\node (vl) at (-0.5, -1) {};
\node[w] (v1l) at (-0.5, 1) {$1$};
\node at (0, 0) {$-$};
\node[w] (v0r) at (0.5, 0) {$1$};
\node (vr) at (0.5, -1) {};
\node[b] (v1r) at (0.5, 1) {};
\draw (v0l) edge (vl);
\draw (v0l) edge (v1l);
\draw (v0r) edge (vr);
\draw (v0r) edge (v1r);
\end{tikzpicture}
\qquad\qquad
\begin{tikzpicture}
\node[b] (v0) at (0, 0) {};
\node[b] (v1) at (0, 1) {};
\node (v) at (0, -1) {};
\draw (v0) edge (v);
\draw (v0) edge (v1);
\end{tikzpicture}
\caption{Differential on $\Br_{\cC}$.}
\label{fig:brdifferential}
\end{figure}

Consider a morphism of operads
\begin{equation}
\Omega \cC\rightarrow \Br_{\cC}
\label{eq:braceunderlying}
\end{equation}
defined in the following way. The operad $\Omega \cC$ is freely generated by $\overline{\cC}[-1]$ and we send a generator $\s x\in \overline{\cC}[-1]$ to the first corolla shown in Figure \ref{fig:brgenerating} with the internal vertex labeled by $x$.

To see that the morphism \eqref{eq:braceunderlying} is compatible with the differential, note that the differential applied to the first tree in Figure \ref{fig:brgenerating} is equal to the sum of trees of the form
\[
\begin{tikzpicture}
\node[b] (v0) at (0, 0) {};
\node (v) at (0, -1) {};
\node[b] (v1) at (0.8, 0.6) {};
\node[w] (v2) at (-1, 1.2) {$1$};
\node at (-0.3, 1.2) {\dots};
\node[w] (v3) at (0.3, 1.2) {$p$};
\node at (0.8, 1.2) {\dots};
\node[w] (v4) at (1.3, 1.2) {$m$};
\node at (-2, 0) {$\sum\limits_{\t\in\pi_0(\Tree_2(m))}\pm$};
\draw (v0) edge (v);
\draw (v0) edge (v1);
\draw (v0) edge (v2);
\draw (v1) edge (v3);
\draw (v1) edge (v4);
\end{tikzpicture}
\]
where the labels of the two internal vertices are obtained by applying the comultiplication in the cooperad $\cC$ to the original label which is exactly the image of the cobar differential on $\Omega \cC$.

Suppose now $A$ is a $\Br_{\cC}$-algebra. Applying the forgetful morphism \eqref{eq:braceunderlying} we get an $\Omega \cC$-algebra structure on $A$ and hence a differential on the cofree $\cC^{\cu}$-coalgebra $\cC^{\cu}(A)$. Moreover, we get an associative product on $\cC^{\cu}(A)$ defined in the following way. A multiplication
\[\cC^{\cu}(A)\otimes \cC^{\cu}(A)\rightarrow \cC^{\cu}(A)\]
of cofree conilpotent $\cC$-coalgebras is uniquely determined by the projection to the cogenerators
\[\cC^{\cu}(A)\otimes \cC^{\cu}(A)\rightarrow A,\]
i.e. by morphisms
\[\cC^{\cu}(l)\otimes A^{\otimes l}\otimes \cC^{\cu}(m)\otimes A^{\otimes m}\rightarrow A.\]
We let the morphisms with $l=1$ be given by the second corollas in Figure \ref{fig:brgenerating} and those with $l\neq 1$ are defined to be zero. The unit is defined to be the inclusion of $k$ into the second summand of $\cC^{\cu}(A)\cong \cC(A)\oplus k$.

\begin{remark}
The multiplication
\[\cC^{\cu}(l)\otimes A^{\otimes l} \otimes \cC^{\cu}(m)\otimes A^{\otimes m}\rightarrow \cC^{\cu}(A)\]
is given by the composition
\[
\begin{tikzpicture}
\node[w] (v0l) at (-2, 0) {$0$};
\node (vl) at (-2, -1) {};
\node[w] (v1l) at (-3, 1) {$1$};
\node at (-2, 1) {\dots};
\node[w] (v2l) at (-1, 1) {$m$};
\node at (0, 0) {$\circ_0$};
\node[s] (v0r) at (2, 0) {$0$};
\node (vr) at (2, -1) {};
\node[w] (v1r) at (1, 1) {$1$};
\node at (2, 1) {\dots};
\node[w] (v2r) at (3, 1) {$l$};
\draw (v0l) edge (vl);
\draw (v0l) edge (v1l);
\draw (v0l) edge (v2l);
\draw (v0r) edge (vr);
\draw (v0r) edge (v1r);
\draw (v0r) edge (v2r);
\end{tikzpicture}
\]
In the composition the $\cC^{\cu}$-label of the square vertex is the label of the output in $\cC^{\cu}(A)$ and the number of incoming edges at the square vertex is the arity of the operation in $\cC^{\cu}(A)$.
\label{rmk:koszulmultiplication}
\end{remark}

\begin{prop}
Let $A$ be a $\Br_{\cC}$-algebra. Thus defined multiplication defines on $\cC^{\cu}(A)$ a structure of a dg associative algebra which is compatible with the $\cC^{\cu}$-coalgebra structure.
\label{prop:bracekoszul}
\end{prop}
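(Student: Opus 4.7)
The strategy is to exploit the universal property of the cofree conilpotent $\cC^{\cu}$-coalgebra to reduce every claim to an identity between maps into the cogenerators $A$. Since the multiplication $\mu\colon \cC^{\cu}(A)\otimes \cC^{\cu}(A)\to \cC^{\cu}(A)$ was defined by prescribing its projection $\pi\circ\mu$ onto $A$, it is tautologically a morphism of $\cC^{\cu}$-coalgebras, which takes care of compatibility with the coalgebra structure. Moreover, the unit axioms are immediate from the identification $\cC^{\cu}(A)\cong \cC(A)\oplus k$ and the fact that the unit is the inclusion of the second factor: the projection $\pi\circ\mu$ restricted to $k\otimes \cC^{\cu}(A)$ or $\cC^{\cu}(A)\otimes k$ coincides with the projection $\pi$, because the generating corollas of Fig.~\ref{fig:brgenerating} reduce to the identity when one of their external labels is the unit $1\in\cC^{\cu}(0)$.

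For associativity, I note that both $\mu\circ(\mu\otimes\id)$ and $\mu\circ(\id\otimes\mu)$ are morphisms of $\cC^{\cu}$-coalgebras $\cC^{\cu}(A)^{\otimes 3}\to \cC^{\cu}(A)$, hence are determined by their projections to $A$. I compute both projections by decomposing each argument via the cooperad structure on $\cC^{\cu}$ and composing via the second corolla of Fig.~\ref{fig:brgenerating}. The resulting expressions are labelled rooted trees in the brace operad, and both sides realize the same sum, the key combinatorial fact being that grafting the second corolla of Fig.~\ref{fig:brgenerating} into itself at the root (as in Remark~\ref{rmk:koszulmultiplication}) is an associative operation, up to the Hopf cooperad coproducts on $\cC^{\cu}$ which pair off correctly by coassociativity.

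For the derivation property, the expression $d\circ \mu - \mu\circ(d\otimes\id + \id\otimes d)$ is a coderivation from $\cC^{\cu}(A)^{\otimes 2}$ to $\cC^{\cu}(A)$ over $\mu$ (since $d$ is a coderivation and $\mu$ is a coalgebra morphism), and so is determined by its projection to $A$. Unwinding definitions, this projection is precisely what the brace differential (Fig.~\ref{fig:brdifferential}) produces when applied to the corolla defining $\mu$: replacing the root external vertex by the first configuration of Fig.~\ref{fig:brdifferential} yields the contribution of $\mu\circ(d\otimes\id)$, replacing any non-root external vertex yields the contribution of $\mu\circ(\id\otimes d)$, and replacing by the second configuration of Fig.~\ref{fig:brdifferential} yields the terms of $d\circ\mu$ coming from the codifferential on $\cC^{\cu}(A)$ induced by the $\Omega\cC$-algebra structure transported along \eqref{eq:braceunderlying}. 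The convention of discarding internal vertices with fewer than two children ensures no spurious terms remain.

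The main obstacle is bookkeeping: tracking Koszul signs from permuting inputs between the two factors, correctly matching Hopf coproducts on the $\cC^{\cu}$-labels with the iterated cooperad comultiplications used to extract projections, and keeping straight the white/black conventions. These signs and combinatorics are however built into the operad $\Br_{\cC}$ precisely so that the identity \eqref{eq:braceunderlying} and the Hopf structure together enforce the required relations, so once the projections are written out, the verifications are essentially formal. Associativity in particular amounts to the bar-construction identity expressing that a multiplication on a cofree coalgebra obtained by brace-style grafting is automatically associative whenever the brace operad is defined via trees with an associative grafting at the root.
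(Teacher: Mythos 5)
Your proposal is correct and follows essentially the same route as the paper: compatibility with the coalgebra structure is tautological from the cofree construction, and associativity and the Leibniz rule are checked after projection to the cogenerators $A$, where they reduce to the associativity of grafting the generating corolla (as in Remark \ref{rmk:koszulmultiplication}) and to the compatibility of the differential of $\Br_{\cC}$ with operadic composition. The paper's proof is terser (it delegates details to \cite[Proposition 3.3]{Sa1}), but the content is the same.
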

\begin{proof}
We refer to \cite[Proposition 3.3]{Sa1} for a detailed proof in the case $\cC=\coAss$.

By construction the product is compatible with the $\cC^{\cu}$-coalgebra structure, so we just need to check the associativity of the product and the fact that the differential $\d$ on $\cC^{\cu}(A)$ is a derivation of the product. To check these axioms, it is enough to check that these hold after projections to the cogenerators $A$.

\begin{itemize}
\item (Associativity). The only nontrivial equation is expressed by the commutative diagram
\[
\xymatrix{
(A\otimes \cC^{\cu}(l)\otimes A^{\otimes l})\otimes \cC^{\cu}(m)\otimes A^{\otimes m} \ar[dd] \ar^{\sim}[r] & A\otimes (\cC^{\cu}(l)\otimes A^{\otimes l}\otimes \cC^{\cu}(m)\otimes A^{\otimes m}) \ar[d] \\
& A\otimes \cC^{\cu}(A) \ar[d] \\
A\otimes \cC^{\cu}(m)\otimes A^{\otimes m} \ar[r] & A
}
\]

We denote by $x$ the element of the first $A$ factor, by $y$ elements of $A^{\otimes l}$ and by $z$ elements of $A^{\otimes m}$. Then the composition along the bottom-left corner is given by
\[
\begin{tikzpicture}
\node[w] (v0l) at (-2, 0) {$0$};
\node (vl) at (-2, -1) {};
\node[w] (v1l) at (-3, 1) {$z$};
\node at (-2, 1) {\dots};
\node[w] (v2l) at (-1, 1) {$z$};
\node at (0, 0) {$\circ_0$};
\node[s] (v0r) at (2, 0) {$x$};
\node (vr) at (2, -1) {};
\node[w] (v1r) at (3, 1) {$y$};
\node at (2, 1) {\dots};
\node[w] (v2r) at (1, 1) {$y$};
\draw (v0l) edge (vl);
\draw (v0l) edge (v1l);
\draw (v0l) edge (v2l);
\draw (v0r) edge (vr);
\draw (v0r) edge (v1r);
\draw (v0r) edge (v2r);
\end{tikzpicture}
\]
which coincides with the composition along the top-right corner following Remark \ref{rmk:koszulmultiplication}.

\item (Derivation). The differential on $\cC^{\cu}(A)$ is given by the sum
\[
\begin{tikzpicture}
\node at (-2.8, 0) {$\d$};
\node[s] (v0l) at (-2, 0) {$0$};
\node (vl) at (-2, -1) {};
\node[w] (v1l) at (-3, 1) {$1$};
\node at (-2, 1) {\dots};
\node[w] (v2l) at (-1, 1) {$m$};
\draw (v0l) edge (vl);
\draw (v0l) edge (v1l);
\draw (v0l) edge (v2l);

\node at (0, 0) {$=\sum\limits_{\t\in\pi_0(\Tree_2(m))}\pm$};

\node[s] (v0r) at (2, 0) {$0$};
\node (vr) at (2, -1) {};
\node[b] (v1r) at (2.8, 0.6) {};
\node[w] (v2r) at (1, 1.2) {$1$};
\node at (1.7, 1.2) {\dots};
\node[w] (v3r) at (2.3, 1.2) {$p$};
\node at (2.8, 1.2) {\dots};
\node[w] (v4r) at (3.3, 1.2) {$m$};
\draw (v0r) edge (vr);
\draw (v0r) edge (v1r);
\draw (v0r) edge (v2r);
\draw (v1r) edge (v3r);
\draw (v1r) edge (v4r);
\end{tikzpicture}
\]

The compatibility of the differential and the multiplication then immediately follows from the compatibility of the differential and the composition in the operad $\Br_{\cC}$ since the multiplication on $\cC^{\cu}(A)$ is defined in terms of the composition.
\end{itemize}
\end{proof}

This proposition motivates the following definition.
\begin{defn}
\label{def:infinitybracemor}
An \emph{$\infty$-morphism of $\Br_{\cC}$-algebras} $A\rightarrow B$ is a morphism of dg associative algebras $\cC^{\cu}(A)\rightarrow \cC^{\cu}(B)$ compatible with the $\cC^{\cu}$-coalgebra structures.
\end{defn}

Suppose $\cC^{\cu}(A)\rightarrow \cC^{\cu}(B)$ is an $\infty$-morphism of $\Br_{\cC}$-algebras. Using the canonical unit morphism $\coComm\rightarrow \cC^{\cu}$ we obtain a morphism $\Sym(A)\rightarrow \Sym(B)$ which preserves the cocommutative comultiplication and the multiplication. In particular, it induces a \emph{strict} morphism of Lie algebras $A\rightarrow B$ after passing to primitive elements.

Let us now consider the case when $\cC^{\cu}$ is a graded Hopf cooperad, i.e. a cooperad in graded commutative dg algebras. The operad $\Br_{\cC}$ inherits the grading given by the sum of all weights of labels in $\cC^{\cu}$. Now recall that $\Br_{\cC}$ acts on $\Conv^0(\cC^{\cu}\{n\}, A)$ for any $n$. If $\cC^{\cu}\{n\}$ has a structure of a graded module over $\cC^{\cu}$, then the natural grading on the convolution algebra $\Conv^0(\cC^{\cu}\{n\}, A)$ makes it into a graded algebra over the graded operad $\Br_{\cC}$.

\begin{remark}
Note that we do not assume that the grading on $\cC^{\cu}\{n\}$ coincides with the grading on $\cC^{\cu}$. In fact, in our main example of polyvector fields the two gradings are distinct.
\end{remark}

Fix the number $n$.
\begin{defn}
Let $A$ be an $\Omega(\cC\{n\})$-algebra whose structure is determined by a Maurer--Cartan element $f\in\Conv(\cC\{n\}; A)$. The \emph{center} of $A$ is the shifted convolution algebra
\[\bZ(A) = \Conv^0(\cC^{\cu}\{n\}; A)[-n]\]
twisted by the Maurer--Cartan element $f$.
\label{def:center}
\end{defn}

Note that by construction the center $\bZ(A)$ is naturally an algebra over $\Br_\cC\{n\}$.

\subsection{Relative brace construction}
\label{sect:relativebrace}

Let us now describe a relative version of these constructions. Consider a pair of complexes $A,B$. Then we can define a relative convolution algebra to be the complex
\[\Conv^0(\cC; A, B) = \Hom_k(\cC(A), B).\]
We are now going to introduce a certain algebraic structure on the triple \[(\Conv^0(\cC^{\cu}; A), \Conv^0(\cC; A, B), \Conv^0(\cC^{\cu}; B))\] which generalizes the pre-Lie structure on the convolution algebra.

Consider the set of colors $\cV=\{\cA\rightarrow \cA, \cA\rightarrow \cB, \cB\rightarrow \cB\}$. We introduce a $\cV$-colored version of the pre-Lie operad denoted by $\preLie^{\rightarrow}$ in the following way. Operations of $\preLie^{\rightarrow}$ are parametrized by rooted trees with edges of two types: of type $\cA$ that we denote by solid lines and of type $\cB$ that we denote by dashed lines. We disallow any vertices which have incoming edges of different types or those that have incoming edges of type $\cB$ but an outgoing edge of type $\cA$. A color of the vertex is determined by the type of inputs and outputs and we denote it as e.g. $\cA\rightarrow \cB$. To resolve ambiguities, we draw incoming edges to leaves (recall that in the case of the ordinary pre-Lie operad we de not draw incoming edges to leaves following \cite{CL}). One can read off the arity of the operation parametrized by a rooted tree in the following way. Each vertex has a color determined by incoming and outgoing edges and so does the whole graph and this determines the arity. See Figure \ref{fig:coloredprelie} for an example. The operadic composition is given by grafting trees exactly in the same way as in the case of the pre-Lie operad.

\begin{figure}
\begin{tikzpicture}
\node[w] (v0) at (0, 0) {$1$};
\node (v) at (0, -1) {};
\node[w] (v1) at (-1, 1) {$2$};
\node (v11) at (-1, 1.7) {};
\node[w] (v2) at (1, 1) {$3$};
\node (v21) at (1, 1.7) {};
\draw[dashed] (v0) edge (v);
\draw (v0) edge (v1);
\draw (v1) edge (v11);
\draw (v0) edge (v2);
\draw (v2) edge (v21);
\end{tikzpicture}
\caption{An example of an operation in $\preLie^{\rightarrow}$ of arity $((\cA\rightarrow \cA)^{\otimes 2}\otimes (\cA\rightarrow \cB), (\cA\rightarrow \cB))$.}
\label{fig:coloredprelie}
\end{figure}
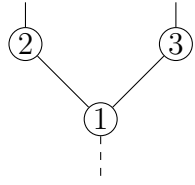

The colored operad $\preLie^{\rightarrow}$ acts on the triple \[(\Conv^0(\cC^{\cu}; A), \Conv^0(\cC; A, B), \Conv^0(\cC^{\cu}; B))\] exactly in the same way as in the case of the usual pre-Lie operad. Namely, given a rooted tree we substitute elements of the convolution algebras into vertices based on colors:
\begin{itemize}
\item if a vertex has color $\cA\rightarrow \cA$, we substitute an element of $\Conv^0(\cC^{\cu}; A)$,

\item if a vertex has color $\cA\rightarrow \cB$, we substitute an element of $\Conv^0(\cC; A, B)$,

\item if a vertex has color $\cB\rightarrow \cB$, we substitute an element of $\Conv^0(\cC^{\cu}; B)$.
\end{itemize}
After such a substitution one reads off the result by composing the morphisms using the pattern given by the rooted tree.

Given a $\preLie^{\rightarrow}$-algebra $(C_1, C_2, C_3)$ one has a natural $L_\infty$ structure on
\[C_1\oplus C_2[-1]\oplus C_3\]
given by expressions in Figure \ref{fig:coloredprelietolie}. Here the first three trees give rise to ordinary Lie brackets and the last tree gives rise to an $L_\infty$ operation.

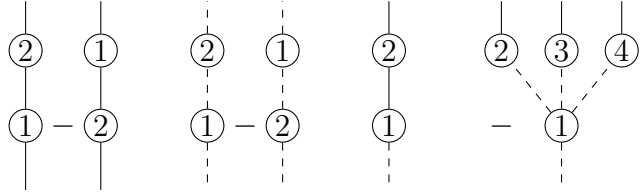
\begin{figure}
\begin{tikzpicture}
\node[w] (v0l) at (-0.5, 0) {$1$};
\node (vl) at (-0.5, -1) {};
\node[w] (v1l) at (-0.5, 1) {$2$};
\node (v2l) at (-0.5, 1.8) {};
\node at (0, 0) {$-$};
\node[w] (v0r) at (0.5, 0) {$2$};
\node (vr) at (0.5, -1) {};
\node[w] (v1r) at (0.5, 1) {$1$};
\node (v2r) at (0.5, 1.8) {};
\draw (v0l) edge (vl);
\draw (v0l) edge (v1l);
\draw (v1l) edge (v2l);
\draw (v0r) edge (vr);
\draw (v0r) edge (v1r);
\draw (v1r) edge (v2r);
\end{tikzpicture}
\qquad
\begin{tikzpicture}
\node[w] (v0l) at (-0.5, 0) {$1$};
\node (vl) at (-0.5, -1) {};
\node[w] (v1l) at (-0.5, 1) {$2$};
\node (v2l) at (-0.5, 1.8) {};
\node at (0, 0) {$-$};
\node[w] (v0r) at (0.5, 0) {$2$};
\node (vr) at (0.5, -1) {};
\node[w] (v1r) at (0.5, 1) {$1$};
\node (v2r) at (0.5, 1.8) {};
\draw[dashed] (v0l) edge (vl);
\draw[dashed] (v0l) edge (v1l);
\draw[dashed] (v1l) edge (v2l);
\draw[dashed] (v0r) edge (vr);
\draw[dashed] (v0r) edge (v1r);
\draw[dashed] (v1r) edge (v2r);
\end{tikzpicture}
\qquad
\begin{tikzpicture}
\node[w] (v0) at (0, 0) {$1$};
\node (v) at (0, -1) {};
\node[w] (v1) at (0, 1) {$2$};
\node (v2) at (0, 1.8) {};
\draw[dashed] (v0) edge (v);
\draw (v0) edge (v1);
\draw (v1) edge (v2);
\end{tikzpicture}
\qquad
\begin{tikzpicture}
\node[w] (v0) at (0, 0) {$1$};
\node (v) at (0, -1) {};
\node[w] (v1) at (-0.8, 1) {$2$};
\node (v11) at (-0.8, 1.8) {};
\node[w] (v2) at (0, 1) {$3$};
\node (v21) at (0, 1.8) {};
\node[w] (v3) at (0.8, 1) {$4$};
\node (v31) at (0.8, 1.8) {};
\node at (-0.8, 0) {$-$};
\draw[dashed] (v0) edge (v);
\draw[dashed] (v0) edge (v1);
\draw (v1) edge (v11);
\draw[dashed] (v0) edge (v2);
\draw (v2) edge (v21);
\draw[dashed] (v0) edge (v3);
\draw (v3) edge (v31);
\end{tikzpicture}
\caption{$L_\infty$ brackets on a $\preLie^{\rightarrow}$ algebra.}
\label{fig:coloredprelietolie}
\end{figure}

Since $\cC^{\cu}$ is a Hopf cooperad, one can similarly define a $\cV$-colored operad $\preLie^{\rightarrow}_{\cC}$ whose operations are parametrized by rooted trees with edges of two types as in the case of $\preLie^{\rightarrow}$ and whose vertices are parametrized by elements of $\cC$. As before, the triple $(\Conv^0(\cC^{\cu}; A), \Conv^0(\cC; A, B), \Conv^0(\cC^{\cu}; B))$ is an algebra over the colored operad $\preLie^{\rightarrow}_{\cC}$.

Using the forgetful map from $\preLie^{\rightarrow}_{\cC}$-algebras to $L_\infty$-algebras, one can apply the general formalism of twistings of \cite{DW1} to construct the colored operad $\Br_{\cC}^{\rightarrow}$ whose operations are parametrized by rooted trees with dashed and solid edges and external and internal vertices as before.

Suppose \[f=(f_1, f_2, f_3)\in (\Conv^0(\cC^{\cu}; A), \Conv^0(\cC; A, B), \Conv^0(\cC^{\cu}; B))\] is a Maurer--Cartan element in the underlying $L_\infty$-algebra. We can twist the differential on $\Conv^0(\cC^{\cu}; A)$ using $f_1$, we can twist the differential on $\Conv^0(\cC^{\cu}; B)$ using $f_3$ and we can twist the differential on $\Conv^0(\cC; A, B)$ using all three elements. Then as before the triple
\[(\Conv^0_{f_1}(\cC^{\cu}; A), \Conv_f^0(\cC; A, B), \Conv_{f_3}^0(\cC^{\cu}; B))\]
becomes an algebra over the colored operad $\Br_{\cC}^{\rightarrow}$ if we assign $f$ to internal vertices.

Suppose now $(C_1, C_2, C_3)$ is any $\Br_{\cC}^{\rightarrow}$-algebra. In particular, $C_1$ and $C_3$ are $\Br_{\cC}$-algebras. One naturally has an $\Omega\cC$-algebra structure on $C_2$ given by sending an element $\s x\in \overline{\cC}[-1]$ to the first corolla shown in Figure \ref{fig:bracemorphism} where the internal vertex is labeled by the element $x$. In particular, $\Conv(\cC; C_2)$ has a Maurer--Cartan element that we denote by $f$. Note that this should not be confused with the previous occurrence of Maurer--Cartan elements in $\Conv^0(\cC^{\cu}; B)$.

\begin{figure}
\begin{tikzpicture}
\node[b] (v0l) at (-5, 0) {};
\node (vl) at (-5, -1) {};
\node[w] (v1l) at (-6, 1) {$1$};
\node (v11l) at (-6, 1.8) {};
\node at (-5, 1) {\dots};
\node[w] (v2l) at (-4, 1) {$m$};
\node (v21l) at (-4, 1.8) {};
\draw[dashed] (v0l) edge (vl);
\draw[dashed] (v0l) edge (v1l);
\draw[dashed] (v0l) edge (v2l);
\draw (v1l) edge (v11l);
\draw (v2l) edge (v21l);

\node[w] (v0) at (0, 0) {$0$};
\node (v) at (0, -1) {};
\node[w] (v1) at (-1, 1) {$1$};
\node (v11) at (-1, 1.8) {};
\node at (0, 1) {\dots};
\node[w] (v2) at (1, 1) {$m$};
\node (v21) at (1, 1.8) {};
\draw[dashed] (v0) edge (v);
\draw[dashed] (v0) edge (v1);
\draw[dashed] (v0) edge (v2);
\draw (v1) edge (v11);
\draw (v2) edge (v21);

\node[w] (v0r) at (5, 0) {$0$};
\node (vr) at (5, -1) {};
\node[w] (v1r) at (4, 1) {$1$};
\node (v11r) at (4, 1.8) {};
\node at (5, 1) {\dots};
\node[w] (v2r) at (6, 1) {$m$};
\node (v21r) at (6, 1.8) {};
\draw[dashed] (v0r) edge (vr);
\draw (v0r) edge (v1r);
\draw (v0r) edge (v2r);
\draw (v1r) edge (v11r);
\draw (v2r) edge (v21r);
\end{tikzpicture}
\caption{An $\Omega\cC$-structure on $C_2$, the morphism $C_3\rightarrow \Conv_f(\cC; C_2)$ and the $\infty$-morphism $C_1\rightarrow \Conv_f(\cC; C_2)$ respectively.}
\label{fig:bracemorphism}
\end{figure}
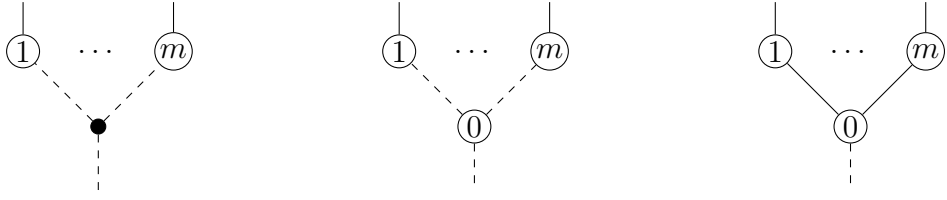

We have a morphism
\begin{equation}
C_3\rightarrow \Conv_f^0(\cC^{\cu}; C_2)
\label{eq:bracemorphism1}
\end{equation}
defined in the following way. The second corolla shown in Figure \ref{fig:bracemorphism} defines a morphism \[\cC^{\cu}(m)\otimes C_1\otimes C_2^{\otimes m}\rightarrow C_2,\] where the element of $\cC^{\cu}(m)$ labels vertex $0$. By adjunction this gives the required morphism.

Similarly, we define the morphism
\begin{equation}
\cC^{\cu}(C_1)\longrightarrow \Conv_f^0(\cC^{\cu}; C_2)
\label{eq:bracemorphism2}
\end{equation}
in the following way. The third corolla shown in Figure \ref{fig:bracemorphism} defines a morphism
\[\cC^{\cu}(m)\otimes C_1^{\otimes m}\otimes C_2\longrightarrow C_2\]
which by adjunction gives the morphism
\[\cC^{\cu}(C_1)\longrightarrow \Hom(C_2, C_2)\rightarrow \Conv_f^0(\cC^{\cu}; C_2).\]

\begin{remark}
One can give the following pictorial representation of the morphism \[\cC^{\cu}(m)\otimes C_1^{\otimes m}\longrightarrow \cC^{\cu}(\Conv_f^0(\cC^{\cu}; C_2)):\]
\[
\begin{tikzpicture}
\node[s] (v0l) at (-2, 0) {$\ $};
\node (vl) at (-2, -1) {};
\node[w] (v1l) at (-3, 1) {$1$};
\node (v11l) at (-3, 1.8) {};
\node at (-2, 1) {\dots};
\node[w] (v3l) at (-1, 1) {$m$};
\node (v31l) at (-1, 1.8) {};
\draw (v0l) edge (vl);
\draw (v0l) edge (v1l);
\draw (v1l) edge (v11l);
\draw (v0l) edge (v3l);
\draw (v3l) edge (v31l);

\node at (0, 0) {$\mapsto\sum$};

\node[s] (v0r) at (2, 0) {$\ $};
\node (vr) at (2, -1) {};
\node[w] (v1r) at (1, 1) {$\ $};
\node[w] (v11r) at (0.7, 1.8) {$1$};
\node (v111r) at (0.7, 2.6) {};
\node[w] (v12r) at (1.3, 1.8) {$2$};
\node at (2.2, 1.8) {\dots};
\node (v121r) at (1.3, 2.6) {};
\node[w] (v2r) at (3, 1) {$\ $};
\node[w] (v21r) at (3, 1.8) {$m$};
\node (v211r) at (3, 2.6) {};
\draw[dashed] (v0r) edge (vr);
\draw[dashed] (v0r) edge (v1r);
\draw (v1r) edge (v11r);
\draw (v11r) edge (v111r);
\draw (v1r) edge (v12r);
\draw (v12r) edge (v121r);
\draw[dashed] (v0r) edge (v2r);
\draw (v2r) edge (v21r);
\draw (v21r) edge (v211r);
\end{tikzpicture}
\]

Here the label of the rectangle on the left is the element $c\in\cC^{\cu}(m)$. The labels of the unmarked vertices on the right are given by applying the coproduct in the cooperad $\cC^{\cu}$ to $c$ with respect to the corresponding picthfork.
\label{rmk:koszulbracemorphism}
\end{remark}

\begin{prop}
Let $(C_1, C_2, C_3)$ be a $\Br_{\cC}^{\rightarrow}$-algebra. Then the morphism \eqref{eq:bracemorphism1}
\[C_3\rightarrow \Conv_f^0(\cC^{\cu}; C_2)\]
is a morphism of $\Br_{\cC}$-algebras.

Similarly, the morphism \eqref{eq:bracemorphism2}
\[C_1\rightarrow \Conv_f^0(\cC^{\cu}; C_2)\]
is an $\infty$-morphism of $\Br_{\cC}$-algebras.
\label{prop:bracemorphisms}
\end{prop}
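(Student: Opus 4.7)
The strategy is to translate both statements into identities of operadic compositions inside $\Br_{\cC}^{\rightarrow}$ and verify them by matching trees. By construction, the $\Br_{\cC}$-algebra structures on $C_3$ and on $\Conv_f^0(\cC^{\cu}; C_2)$ are both restrictions of the single $\Br_{\cC}^{\rightarrow}$-structure on the triple $(C_1, C_2, C_3)$: the former by restricting to fully dashed trees, and the latter because brace operations on a convolution algebra associated to a Hopf cooperad are, by the definition in Section~\ref{sect:braces}, precisely the brace operations dressed with the Koszul pairing with $C_2$-inputs and twisted by the Maurer--Cartan element $f$. Both maps \eqref{eq:bracemorphism1} and \eqref{eq:bracemorphism2} then arise from specific generating trees in $\Br_{\cC}^{\rightarrow}$ that mix the two colors, and the claim becomes a compatibility statement for tree grafting.

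For statement (1), I would fix a generating brace tree (Figure~\ref{fig:brgenerating}) and apply it to elements $g_0, \dots, g_m \in C_3$. Applying the $\Br_{\cC}$-operation in $C_3$ first and then the morphism \eqref{eq:bracemorphism1} corresponds to grafting the second corolla of Figure~\ref{fig:bracemorphism} at the root of the all-dashed brace tree, with its $C_2$-inputs attached at the bottom. Applying \eqref{eq:bracemorphism1} componentwise first and then the brace operation on $\Conv_f^0(\cC^{\cu}; C_2)$ corresponds instead to grafting the second corolla at each external vertex, distributing the $C_2$-inputs through pitchforks, and then evaluating. The identity of these two expansions is precisely the associativity of composition in $\Br_{\cC}^{\rightarrow}$ together with the Hopf coproduct compatibility that defines the brace structure on the convolution algebra. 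Internal black vertices are handled in exactly the same way, with the Maurer--Cartan twist by $f$ supplying exactly the additional terms required to match the cobar differential on $\Omega\cC$-operations on $C_2$.

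For statement (2), I follow the recipe of Proposition~\ref{prop:bracekoszul}. By Definition~\ref{def:infinitybracemor}, one must check that the induced coalgebra map $\Phi \colon \cC^{\cu}(C_1) \to \cC^{\cu}(\Conv_f^0(\cC^{\cu}; C_2))$ is a morphism of dg associative algebras. Compatibility with the $\cC^{\cu}$-coalgebra structure is immediate from the pictorial description of $\Phi$ in Remark~\ref{rmk:koszulbracemorphism}: the defining pitchfork sum realizes $\Phi$ as a coalgebra map by construction. For the multiplicative compatibility, both multiplications are determined by their projections to the cogenerators (Remark~\ref{rmk:koszulmultiplication}), so the problem reduces to verifying a single tree identity: grafting two of the third corollas of Figure~\ref{fig:bracemorphism} via the multiplication corolla on $C_2$ equals the image under $\Phi$ of the multiplication in $\cC^{\cu}(C_1)$. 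This is again a direct consequence of the associativity of composition in $\Br_{\cC}^{\rightarrow}$ combined with the Hopf structure on $\cC^{\cu}$, and mirrors the argument of Proposition~\ref{prop:bracekoszul}. Compatibility with the differential is verified by expanding the internal-vertex contributions and invoking the Maurer--Cartan equation for $f$ to identify the cobar differential with the one induced by the twisting.

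The main obstacle is the combinatorial bookkeeping for (2), where the multiplicativity of $\Phi$ requires reorganizing a double sum over pitchfork decompositions of the $\cC^{\cu}$-labels into one indexed by the grafting pattern of the composite tree, with Koszul signs tracked throughout. The coalgebraic part and statement (1) are essentially tautological unwindings of the $\Br_{\cC}^{\rightarrow}$-action, but the multiplicative identity in (2) is the genuine content and is the colored-operad analogue of the identity proved in \cite[Proposition 3.3]{Sa1}; one could in fact present the argument by adapting that proof verbatim to the present colored setting.
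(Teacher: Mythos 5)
Your proposal is correct and follows essentially the same route as the paper: both statements are verified by matching tree compositions in $\Br_{\cC}^{\rightarrow}$ against the brace operations on the twisted convolution algebra, with statement (1) reduced to comparing the corolla-grafted-at-the-root expansion with the sum over ways of attaching incoming edges at each vertex, and statement (2) reduced via Definition \ref{def:infinitybracemor} and Remark \ref{rmk:koszulbracemorphism} to a cogenerator-level identity in the spirit of Proposition \ref{prop:bracekoszul}. The paper's own proof is no more detailed than yours (it declares the two expansions for (1) to "obviously coincide" and defers (2) to a "similar computation"), so nothing further is needed.
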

\begin{proof}
For the first statement we have to show that the diagram
\[
\xymatrix{
\Br_{\cC}(m)\otimes C_3^{\otimes m} \ar[r] \ar[d] & \Br_{\cC}(m)\otimes \Conv_f^0(\cC^{\cu}; C_2)^{\otimes m} \ar[d] \\
C_3 \ar[r] & \Conv_f^0(\cC^{\cu}; C_2)
}
\]
commutes. Pick $\t\in\Br_{\cC}(m)$ and $x_1, \dots, x_m\in C_3$. The composition along the bottom-left corner is given by applying composition using the pattern given by the trees
\[
\begin{tikzpicture}
\node[w] (v0) at (0, 0) {$0$};
\node (v) at (0, -1) {};
\node[w] (v1) at (-1, 1) {$1$};
\node (v11) at (-1, 1.8) {};
\node at (0, 1) {\dots};
\node[w] (v2) at (1, 1) {$m$};
\node (v21) at (1, 1.8) {};
\node at (1, 0) {$\circ_0\ \t$};
\draw[dashed] (v0) edge (v);
\draw[dashed] (v0) edge (v1);
\draw[dashed] (v0) edge (v2);
\draw (v1) edge (v11);
\draw (v2) edge (v21);
\end{tikzpicture}
\]
where the vertices of $\t$ are labeled by the elements $x_i$. Similarly, the composition along the top-right corner is given by the sum over numbers $n_1, \dots, n_m$ of trees given by attaching $n_i$ incoming edges at vertex $i$ of $\t$. The two expressions obviously coincide. For instance, in the composition
\[
\begin{tikzpicture}
\node[w] (v0l) at (-2, 0) {$0$};
\node (vl) at (-2, -1) {};
\node[w] (v1l) at (-3, 1) {$1$};
\node (v11l) at (-3, 1.8) {};
\node[w] (v2l) at (-2, 1) {$2$};
\node (v21l) at (-2, 1.8) {};
\node[w] (v3l) at (-1, 1) {$3$};
\node (v31l) at (-1, 1.8) {};
\draw[dashed] (v0l) edge (vl);
\draw[dashed] (v0l) edge (v1l);
\draw[dashed] (v0l) edge (v2l);
\draw[dashed] (v0l) edge (v3l);
\draw (v1l) edge (v11l);
\draw (v2l) edge (v21l);
\draw (v3l) edge (v31l);

\node at (0, 0) {$\circ_0$};

\node[w] (v0r) at (2, 0) {$x_1$};
\node (vr) at (2, -1) {};
\node[w] (v1r) at (1, 1) {$x_2$};
\node (v11r) at (1, 1.8) {};
\node[w] (v2r) at (3, 1) {$x_3$};
\node (v21r) at (3, 1.8) {};
\draw[dashed] (v0r) edge (vr);
\draw[dashed] (v0r) edge (v1r);
\draw[dashed] (v1r) edge (v11r);
\draw[dashed] (v0r) edge (v2r);
\draw[dashed] (v2r) edge (v21r);
\end{tikzpicture}
\]
the term with $n_1 = 1$, $n_2 = 2$ and $n_3 = 0$ is
\[
\begin{tikzpicture}
\node[w] (v0) at (0, 0) {$x_1$};
\node (v) at (0, -1) {};
\node[w] (v01) at (0, 1) {$3$};
\node (v011) at (0, 1.8) {};
\node[w] (v1) at (-1, 1) {$x_2$};
\node[w] (v11) at (-1.3, 1.8) {$1$};
\node (v111) at (-1.3, 2.6) {};
\node[w] (v12) at (-0.7, 1.8) {$2$};
\node (v121) at (-0.7, 2.6) {};
\node[w] (v2) at (1, 1) {$x_3$};
\node (v21) at (1, 1.8) {};
\draw[dashed] (v0) edge (v);
\draw[dashed] (v0) edge (v1);
\draw[dashed] (v1) edge (v11);
\draw[dashed] (v1) edge (v12);
\draw (v11) edge (v111);
\draw (v12) edge (v121);
\draw[dashed] (v0) edge (v01);
\draw (v01) edge (v011);
\draw[dashed] (v0) edge (v2);
\draw (v2) edge (v21);
\end{tikzpicture}
\]

For the second statement we have to check that
\[\cC^{\cu}(C_1)\rightarrow \cC^{\cu}(\Conv_f^0(\cC^{\cu}; C_2))\]
is a morphism compatible with the differentials and multiplications. The computation is similar to the proof of the first statement and uses the description of the morphism given in Remark \ref{rmk:koszulbracemorphism}.
\end{proof}

\begin{remark}
If $C$ is a $\Br_{\cC}$-algebra and $D$ is an $\Omega \cC$-algebra, then an $\infty$-morphism of $\Br_{\cC}$-algebras $C\rightarrow D$ is essentially the same as the notion of a brace module from \cite[Definition 3.2]{Sa1} when $\cC=\coAss$. In this case an analog of the second statement in the previous proposition is \cite[Proposition 4.2]{Sa1}.
\end{remark}

\subsection{Swiss-cheese construction}
\label{sect:SCalgebras}

Recall from \cite{Th} that a Swiss-cheese algebra consists of an $\bE_2$-algebra $A$, an $\bE_1$-algebra $B$ and an $\bE_2$-morphism $A\rightarrow \mathrm{HH}^\bullet(B)$ to the Hochschild cohomology of $B$. A model of the $\bE_2$ operad is given by the brace operad which can be obtained in our notation as $\Br_{\coAss}\{1\}$, the brace construction on the Hopf cooperad of coassociative coalgebras. In this section we construct a colored operad $\SC(\cC_1, \cC_2)$ generalizing the Swiss-cheese operad using the brace construction. An algebra over $\SC(\cC_1, \cC_2)$ will be an $\Omega\cC_1$-algebra $A$, an $\Omega(\cC_2\{n\})$-algebra $B$ and an $\infty$-morphism of $\Omega\cC_1$-algebras $A\rightarrow \bZ(B)$. The construction of the colored operad $\SC(\cC_1, \cC_2)$ will be modeled after the resolution of the operad controlling morphisms of $\cO$-algebras constructed in \cite[Section 2]{Ma}.

Fix a number $n$. Let $\cC_1$ be a cooperad and $\cC_2$ a cooperad with a Hopf counital structure together with an operad morphism \[F\colon \Omega\cC_1\rightarrow \Br_{\cC_2}\{n\}.\]
From this data we define a semi-free colored operad $\SC(\cC_1, \cC_2)$ in the following way.

The set of colors of $\SC(\cC_1, \cC_2)$ is $\{\cA, \cB\}$. The operad is semi-free on the colored symmetric sequence $P(\cC_1, \cC_2)$ whose nonzero elements are
\begin{align*}
&P(\cC_1, \cC_2)(\cA^{\otimes m}, \cA) = \overline{\cC_1}(m) \\
&P(\cC_1, \cC_2)(\cB^{\otimes l}, \cB) = \overline{\cC_2}\{n\}(l) \\
&P(\cC_1, \cC_2)(\cA^{\otimes m}\otimes \cB^{\otimes l}, \cB) = \cC_1(m)\otimes \cC_2^{\cu}\{n\}(l)[n+1].
\end{align*}

The colored operad $\Free(P(\cC_1, \cC_2)[-1])$ has operations parametrized by trees with edges of two types: those of color $\cA$ that we denote by solid lines and those of color $\cB$ that we denote by dashed lines. The vertices of the trees are labeled by generating operations in $P(\cC_1, \cC_2)$. We define a differential on $\Free(P(\cC_1, \cC_2)[-1])$ in the following way. The differentials in arities $(\cA^{\otimes -}, \cA)$ and $(\cB^{\otimes -}, \cB)$ are the usual cobar differentials \eqref{eq:cobardifferential}. The differential on an element $\s^{-n} X\otimes Y$ for $X\in \cC_1(m)$ and $Y\in\cC_2^{\cu}\{n\}(l)$ has four components:
\begin{enumerate}
\item
\[\d_1(\s^{-n} X\otimes Y) = (-1)^n \s^{-n} \d_1 X\otimes Y + (-1)^{n+|X|} \s^{-n} X\otimes \d_1 Y\]
where $\d_1$ are the internal differentials on the complexes $\cC_1(m)$ and $\cC_2^{\cu}\{n\}(l)$.

\item
\[\d_2(\s^{-n} X\otimes Y) = (-1)^n (\s^{-n-1} \otimes 1) \sum_{\t\in\pi_0(\Tree_2(m))} (\s\otimes \s)(\t, \Delta_\t(X))\circ_0 Y,\]
where we use the following notation. Let $\Delta_\t(X) = X_{(0)}\otimes X_{(1)}$ with $X_{(0)}$ the label of the root. We denote by $(\t, \Delta_\t(X))\circ_0 Y$ the tree $\t$ with additional $l$ dashed incoming edges at the root which is labeled by $X_{(0)}\otimes Y$. The right-hand side consists of a composition of an operation in $P(\cA^{\otimes -}, \cA)$ and $P(\cA^{\otimes -}\otimes \cB^{\otimes -}, \cB)$. See Figure \ref{fig:SCd2} for an example.

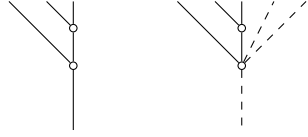
\begin{figure}[h]
\begin{tikzpicture}
\node[w] (v0) at (0, 0) {};
\node (root) at (0, -1) {};
\node[w] (v1) at (0, 0.5) {};
\node (v01) at (-1, 1) {};
\node (v11) at (-0.5, 1) {};
\node (v12) at (0, 1) {};
\draw (v0) edge (root);
\draw (v0) edge (v1);
\draw (v0) edge (v01);
\draw (v1) edge (v11);
\draw (v1) edge (v12);
\end{tikzpicture}
\qquad
\begin{tikzpicture}
\node[w] (v0) at (0, 0) {};
\node (root) at (0, -1) {};
\node[w] (v1) at (0, 0.5) {};
\node (v01) at (-1, 1) {};
\node (v02) at (0.5, 1) {};
\node (v03) at (1, 1) {};
\node (v11) at (-0.5, 1) {};
\node (v12) at (0, 1) {};
\draw[dashed] (v0) edge (root);
\draw (v0) edge (v1);
\draw (v0) edge (v01);
\draw[dashed] (v0) edge (v02);
\draw[dashed] (v0) edge (v03);
\draw (v1) edge (v11);
\draw (v1) edge (v12);
\end{tikzpicture}
\caption{A tree $\t$ and $\t\circ_0 Y$ with $l=2$.}
\label{fig:SCd2}
\end{figure}

\item
\begin{align*}
\d_3(\s^{-n} X\otimes Y) &= (-1)^n (\s^{-n-1}\otimes 1) X\circ_0 \sum_{\t\in\pi_0(\Tree_2(l))} (\s\otimes \s)(\t, \Delta_\t(Y)) \\
&+ (-1)^n (1 \otimes \s^{-n-1}) X\circ_1 \sum_{\t\in\pi_0(\Tree_2(l))} (\s\otimes \s)(\t, \Delta_\t(Y)),
\end{align*}
where we use the following notation. Let $\Delta_\t(Y) = Y_{(0)}\otimes Y_{(1)}$ with $Y_{(0)}$ the label of the root. We denote by $X\circ_0 (\t, \Delta_\t(Y))$ the tree $\t$ with additional $m$ solid incoming edges at the root which is labeled by $X\otimes Y_{(0)}$. Similarly, $X\circ_1 (\t, \Delta_\t(Y))$ is the tree $\t$ with additional $m$ solid incoming edges at the other node which is labeled by $X\otimes Y_{(1)}$. See figure \ref{fig:SCd3} for an example.

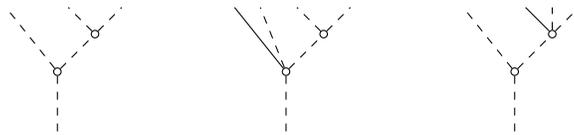
\begin{figure}[h]
\begin{tikzpicture}
\node[w] (v0) at (0, 0) {};
\node (root) at (0, -1) {};
\node[w] (v1) at (0.5, 0.5) {};
\node (v01) at (-0.8, 1) {};
\node (v11) at (0, 1) {};
\node (v12) at (1, 1) {};
\draw[dashed] (v0) edge (root);
\draw[dashed] (v0) edge (v1);
\draw[dashed] (v0) edge (v01);
\draw[dashed] (v1) edge (v11);
\draw[dashed] (v1) edge (v12);
\end{tikzpicture}
\qquad
\begin{tikzpicture}
\node[w] (v0) at (0, 0) {};
\node (root) at (0, -1) {};
\node[w] (v1) at (0.5, 0.5) {};
\node (v01) at (-0.8, 1) {};
\node (v02) at (-0.4, 1) {};
\node (v11) at (0, 1) {};
\node (v12) at (1, 1) {};
\draw[dashed] (v0) edge (root);
\draw[dashed] (v0) edge (v1);
\draw (v0) edge (v01);
\draw[dashed] (v0) edge (v02);
\draw[dashed] (v1) edge (v11);
\draw[dashed] (v1) edge (v12);
\end{tikzpicture}
\qquad
\begin{tikzpicture}
\node[w] (v0) at (0, 0) {};
\node (root) at (0, -1) {};
\node[w] (v1) at (0.5, 0.5) {};
\node (v01) at (-0.8, 1) {};
\node (v11) at (0, 1) {};
\node (v12) at (0.5, 1) {};
\node (v13) at (1, 1) {};
\draw[dashed] (v0) edge (root);
\draw[dashed] (v0) edge (v1);
\draw[dashed] (v0) edge (v01);
\draw (v1) edge (v11);
\draw[dashed] (v1) edge (v12);
\draw[dashed] (v1) edge (v13);
\end{tikzpicture}
\caption{A tree $\t$, $X\circ_0 \t$ and $X\circ_1 \t$ with $m=1$.}
\label{fig:SCd3}
\end{figure}

\item
\[\d_4(\s^{-n} X\otimes Y) = \sum_r \sum_{\t\in\Isom_\pitchfork(m, r)} F(\t, \Delta_\t(X), Y).\]

Here $F(\t, \Delta_\t(X), Y)$ is defined in the following way. Denote by $X_{(i)}\in \cC_1$ the labels of the vertices in $\Delta_\t(X)$ with $X_{(0)}$ the label of the root. The image of $\s X_{(0)}$ under $F\colon \Omega\cC_1\rightarrow \Br_{\cC_2}\{n\}$ is a rooted tree $F(\s X_{(0)})$ labeled by $r$ elements $Z_{(i)}\in\cC_2^{\cu}$. Consider the composition $\t\circ_0 F(\s X_{(0)})$. We consider the following set of trees $\tilde{\t}$: a tree $\tilde{\t}$ is obtained from $\t\circ_0 F(\s X_{(0)})$ by adding an arbitrary number of incoming dashed edges to vertices so that the total number of incoming dashed edges is $l$. Let us denote by $\tilde{\t}_{dashed}$ the tree obtained from $\tilde{t}$ by erasing all solid edges. We let $\Delta_{\t_{dashed}}(Y) = Y_{(1)}\otimes \dots \otimes Y_{(r)}$. The labelings of vertices of $\tilde{\t}$ are of two kinds: external vertices are labeled by the tensor product $X_{(i)}\otimes Y_{(i)}Z_{(i)}$ where $Y_{(i)}Z_{(i)}$ is the product in the Hopf cooperad $\cC_2^{\cu}$ and they belong to the operations in $P(\cA^{\otimes -}\otimes \cB^{\otimes -}, \cB)$; the internal vertices are simply labeled by elements of $\cC_2$ and they belong to the operations in $P(\cB^{\otimes -}, \cB)$. We refer to Figure \ref{fig:SCd4} for an example. We define $F(\t, \Delta_\t(X), Y)$ to be the sum over all such trees $\tilde{\t}$.

\begin{figure}[h]
\begin{tikzpicture}
\node[w] (v0) at (0, 0) {};
\node (root) at (0, -0.5) {};
\node[w] (v1) at (-1, 0.5) {};
\node[w] (v2) at (0, 0.5) {};
\node[w] (v3) at (1, 0.5) {};
\node (v11) at (-1.2, 1) {};
\node (v12) at (-0.8, 1) {};
\node (v21) at (-0.2, 1) {};
\node (v22) at (0.2, 1) {};
\node (v31) at (0.8, 1) {};
\node (v32) at (1.2, 1) {};
\draw (v0) edge (root);
\draw (v0) edge (v1);
\draw (v0) edge (v2);
\draw (v0) edge (v3);
\draw (v1) edge (v11);
\draw (v1) edge (v12);
\draw (v2) edge (v21);
\draw (v2) edge (v22);
\draw (v3) edge (v31);
\draw (v3) edge (v32);
\end{tikzpicture}
\qquad
\begin{tikzpicture}
\node[b] (v0) at (0, 0) {};
\node (root) at (0, -0.5) {};
\node[w] (v1) at (-0.5, 0.7) {$1$};
\node[w] (v2) at (-0.5, 1.4) {$2$};
\node[w] (v3) at (0.5, 0.7) {$3$};
\draw (v0) edge (root);
\draw (v0) edge (v1);
\draw (v0) edge (v3);
\draw (v1) edge (v2);
\end{tikzpicture}
\qquad
\begin{tikzpicture}
\node[w] (v0) at (0, 0) {};
\node (root) at (0, -0.5) {};
\node[w] (v1) at (-0.7, 0.5) {};
\node[w] (v2) at (-1.4, 1) {};
\node[w] (v3) at (0.7, 0.5) {};
\node (v11) at (-0.9, 1.5) {};
\node (v12) at (-0.7, 1.5) {};
\node (v13) at (-0.5, 1.5) {};
\node (v21) at (-1.6, 1.5) {};
\node (v22) at (-1.2, 1.5) {};
\node (v31) at (0.5, 1.5) {};
\node (v32) at (0.7, 1.5) {};
\node (v33) at (0.9, 1.5) {};
\draw[dashed] (v0) edge (root);
\draw[dashed] (v0) edge (v1);
\draw[dashed] (v0) edge (v3);
\draw[dashed] (v1) edge (v2);
\draw (v1) edge (v11);
\draw (v1) edge (v12);
\draw[dashed] (v1) edge (v13);
\draw (v2) edge (v21);
\draw (v2) edge (v22);
\draw (v3) edge (v31);
\draw (v3) edge (v32);
\draw[dashed] (v3) edge (v33);
\end{tikzpicture}
\qquad
\begin{tikzpicture}
\node[w] (v0) at (0, 0) {};
\node (root) at (0, -0.5) {};
\node[w] (v1) at (-0.7, 0.5) {};
\node[w] (v2) at (-1.4, 1) {};
\node[w] (v3) at (0.7, 0.5) {};
\node (v11) at (-0.7, 1.5) {};
\node (v31) at (0.7, 1.5) {};
\draw[dashed] (v0) edge (root);
\draw[dashed] (v0) edge (v1);
\draw[dashed] (v0) edge (v3);
\draw[dashed] (v1) edge (v2);
\draw[dashed] (v1) edge (v11);
\draw[dashed] (v3) edge (v31);
\end{tikzpicture}
\caption{A pitchfork $\t$, a rooted tree $F(\s X_{(0)})$, an example of $\tilde{\t}$ and $\tilde{\t}_{dashed}$.}
\label{fig:SCd4}
\end{figure}
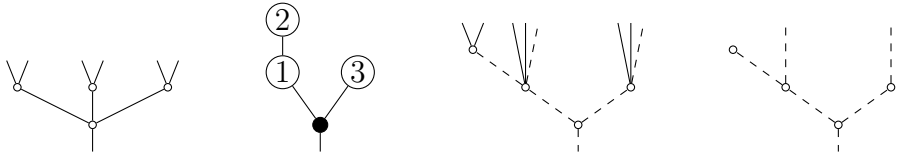
\end{enumerate}

\begin{remark}
Let us informally explain the differentials. The first differential $\d_1$ is simply the sum of the internal differentials on $\cC_1$ and $\cC_2$. The differentials $\d_2$ and $\d_3$ are analogous to the cobar differentials \eqref{eq:cobardifferential} on $\Omega\cC_1$ and $\Omega\cC_2$ respectively. Finally, the differential $\d_4$ expresses the fact that $A\rightarrow \bZ(B)$ is an $\infty$-morphism of $\Omega\cC_1$-algebras and the complicated structure comes from the fact that $\bZ(B)$ is an $\Omega\cC_1$-algebra via the morphism $F\colon \Omega\cC_1\rightarrow \Br_{\cC_2}\{n\}$.
\end{remark}

\begin{lm}
The total differential $\d$ on $\Free(\cP(\cC_1, \cC_2)[-1])$ squares to zero.
\end{lm}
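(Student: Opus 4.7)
The plan is to check $\d^2 = 0$ on each of the three classes of generators of the free colored operad $\Free(P(\cC_1,\cC_2)[-1])$ separately, since $\d$ is a derivation and the free colored operad is generated by $P(\cC_1,\cC_2)[-1]$. On generators in $P(\cC_1,\cC_2)(\cA^{\otimes m},\cA)=\overline{\cC_1}(m)$ only $\d_1+\d_2$ acts, and this is precisely the cobar differential on $\Omega\cC_1$, so $(\d_1+\d_2)^2=0$ by Lemma \ref{lm:cobardifferential}. Symmetrically, on generators $P(\cC_1,\cC_2)(\cB^{\otimes l},\cB)=\overline{\cC_2}\{n\}(l)$ only $\d_1+\d_3$ acts and this is the cobar differential on $\Omega(\cC_2\{n\})$.

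The substantive case is a generator $\s^{-n}X\otimes Y$ with $X\in\cC_1(m)$ and $Y\in\cC_2^{\cu}\{n\}(l)$. I would expand $\d^2=\sum_{i,j=1}^{4}\d_i\d_j$ and show term by term:
\begin{itemize}
\item $\d_1^2=0$ follows from $\d_1$ being built out of the internal differentials on $\cC_1$ and $\cC_2^{\cu}\{n\}$.
\item $\d_1\d_k+\d_k\d_1=0$ for $k=2,3$ expresses compatibility of the internal differential with the comultiplication of the respective cooperad.
\item $\d_2^2=0$ and $\d_3^2=0$ on $\s^{-n}X\otimes Y$ follow from coassociativity of $\cC_1$ and $\cC_2$ respectively. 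The $\d_3^2$ check is slightly more intricate because it mixes terms of the form $X\circ_0(\t,\Delta_\t Y)$ and $X\circ_1(\t,\Delta_\t Y)$; coassociativity of the comultiplication on $\cC_2^{\cu}$ together with the Hopf (co)unit conditions ensures the cross-terms cancel.
\item $\d_2\d_3+\d_3\d_2=0$ because applying the coproduct on $X$ (adding a solid node at the root) and the coproduct on $Y$ (splitting off dashed edges at the root or a second node) are independent operations.
\end{itemize}

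The heart of the argument, and the main obstacle, is that the $\d_4$-contributions cancel against themselves and against $\d_1,\d_2,\d_3$. The key input is that $F\colon \Omega\cC_1\to\Br_{\cC_2}\{n\}$ is a morphism of dg operads, so it intertwines the cobar differential on $\Omega\cC_1$ (whose restriction to generators gives $\s\d_1 X+\sum_{\t\in\pi_0(\Tree_2(m))}(\s\otimes\s)(\t,\Delta_\t X)$) with the brace differential on $\Br_{\cC_2}\{n\}$ described in Figure \ref{fig:brdifferential}. Concretely:
\begin{itemize}
\item $\d_1\d_4+\d_4\d_1=0$ and $\d_2\d_4+\d_4\d_2=0$ together express the chain-map condition on $F$: splitting $X$ by its coproduct and then applying $F$ to each factor agrees with applying $F$ to $X$ and then splitting the resulting tree by the $\Br_{\cC_2}$-differential which replaces an external vertex with a black vertex (first picture of Figure \ref{fig:brdifferential}).
\item $\d_3\d_4+\d_4\d_3=0$ comes from the other component of the $\Br_{\cC_2}$-differential: splitting $Y$-labels via the coproduct on $\cC_2^{\cu}$ either before or after applying $F$ gives the same result because $F$ respects the full composition in $\Br_{\cC_2}\{n\}$ (which in turn uses the Hopf structure on $\cC_2^{\cu}$).
\item $\d_4^2=0$ reduces to associativity of the brace operad structure, i.e.\ iterated pitchfork compositions agree with the operadic composition in $\Br_{\cC_2}\{n\}$; this is the statement that $F$ preserves operad composition.
\end{itemize}

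The hard part will be the combinatorial bookkeeping of pitchforks in $\Isom_\pitchfork(m,r)$ together with the dashed attachments: one must match, for each pair of successive operations, the terms produced by the two orderings, using coassociativity on $\cC_1$ and $\cC_2^{\cu}$ and the Hopf compatibility implicit in the definition of $\Br_{\cC_2}\{n\}$. Once the case analysis is set up, each equation becomes a tree-by-tree matching that follows formally from $F$ being a morphism of dg operads, exactly as in the standard verification that the cobar complex $\Omega\cC_1$ has a square-zero differential, extended by one extra ``coupling'' term coming from $F$.
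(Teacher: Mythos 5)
Your overall strategy (reduce to generators, treat the pure arities by Lemma \ref{lm:cobardifferential}, and handle the mixed generators by a term-by-term cancellation whose key input is that $F$ is a morphism of dg operads) is the same as the paper's. However, the decomposition $\d^2=\sum_{i,j=1}^{4}\d_i\d_j$ on a mixed generator is not correct as stated, and this is where the argument would break down if carried out literally. The point is that $\d$ is a derivation of the \emph{free} operad, and $\d_2$, $\d_3$, $\d_4$ produce composites in which new pure-color generators appear: $\d_2$ creates a vertex labeled in $\overline{\cC_1}$, while $\d_3$ and $\d_4$ create vertices labeled in $\overline{\cC_2}$ (for $\d_4$ these are the internal vertices of the brace tree $F(\s X_{(0)})$, which land in $P(\cB^{\otimes -},\cB)$). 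The second application of $\d$ also hits those vertices through the cobar differentials $\d_{\cA}$ and $\d_{\cB}$, so the correct groupings are $(\d_2^2+\d_{\cA}\d_2)$, $(\d_3^2+\d_{\cB}\d_3)$ and $(\d_4^2+\d_{\cB}\d_4)$. In particular $\d_2^2$, $\d_3^2$ and $\d_4^2$ do \emph{not} vanish individually by coassociativity or by anything else; each cancels only against the corresponding cross-term, exactly as in the usual cobar computation.

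Relatedly, your identification of the input needed for the $\d_4$-cancellations is off. The vanishing of $(\d_4^2+\d_{\cB}\d_4)$ does not reduce to ``$F$ preserves operad composition'' (that is needed merely to make sense of $\d_4$); it is the \emph{quadratic part of the chain-map condition} for $F\colon\Omega\cC_1\rightarrow\Br_{\cC_2}\{n\}$, i.e.\ the statement that applying $\Delta_{\t}$ to $X$ and composing $F$ of the two factors in $\Br_{\cC_2}\{n\}$ agrees with the combinatorial (vertex-splitting) part of the brace differential applied to $F(\s X)$. The linear part of the chain-map condition is what kills $(\d_1\d_4+\d_4\d_1)$. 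By contrast, the mixed terms $(\d_2\d_4+\d_4\d_2)$ and $(\d_3\d_4+\d_4\d_3)$, which you fold into the chain-map condition, cancel for the more elementary reason that the corresponding tree modifications are independent and so anticommute by the Koszul sign rule, as do $(\d_2\d_3+\d_3\d_2)$. Once you repair the decomposition to include the $\d_{\cA}$- and $\d_{\cB}$-contributions and route the two halves of the chain-map condition to the correct pairs of terms, the rest of your outline goes through.
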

\begin{proof}
The claim in arities $(\cA^{\otimes m}, \cA)$ and $(\cB^{\otimes l}, \cB)$ follows from Lemma \ref{lm:cobardifferential}.

Let us split the differentials on the generators in arities $(\cA^{\otimes -}, \cA)$ and $(\cB^{\otimes -}, \cB)$ as $\d=\d_1+\d_\cA$ and $\d=\d_1+\d_\cB$ respectively.

Given an element $\s^{-n} X\otimes Y$ for $X\in \cC_1(m)$ and $Y\in \cC_2^{\cu}(l)$ the expression $\d^2 (\s^{-n} X\otimes Y)$ splits into the following combinations:
\begin{enumerate}
\item $\d_1^2(\s^{-n} X\otimes Y)$,

\item $(\d_1 \d_2 + \d_2 \d_1)(\s^{-n} X\otimes Y)$,

\item $(\d_1 \d_3 + \d_3 \d_1)(\s^{-n} X\otimes Y)$,

\item $(\d_1 \d_4 + \d_4 \d_1)(\s^{-n} X\otimes Y)$,

\item $(\d_2^2 + \d_\cA\d_2)(\s^{-n} X\otimes Y)$,

\item $(\d_3^2 + \d_\cB\d_3)(\s^{-n} X\otimes Y)$,

\item $(\d_2\d_3 + \d_3\d_2)(\s^{-n} X\otimes Y)$,

\item $(\d_2\d_4 + \d_4\d_2)(\s^{-n} X\otimes Y)$,

\item $(\d_3\d_4 + \d_4\d_3)(\s^{-n} X\otimes Y)$,

\item $(\d_4^2 + \d_\cB \d_4)(\s^{-n} X\otimes Y)$.
\end{enumerate}

We claim that each of these is zero. It is obvious for terms of type $(1)$. Terms of type $(2)$ and $(3)$ vanish due to compatibility of the cooperad structure on $\cC_1$ and $\cC_2^{\cu}$ respectively with the differentials. The vanishing of terms of type $(5)$ and $(6)$ follows as in Lemma \ref{lm:cobardifferential}. The vanishing of the terms of type $(7)$, $(8)$, $(9)$ is obvious as the corresponding modifications of the trees are independent.

Differentials on both $\Omega\cC_1$ and $\Br_{\cC_2}\{n\}$ have a linear and a quadratic component. Therefore, the compatibility of the morphism $F\colon \Omega \cC_1\rightarrow \Br_{\cC_2}\{n\}$ with differentials has two implications. First, the compatibility of the linear parts of the differentials implies the vanishing of terms of type $(4)$. Second, the compatibility of the quadratic parts of the differentials implies the vanishing of terms of type $(10)$.
\end{proof}

\begin{defn}
The \emph{Swiss-cheese operad} $\SC(\cC_1, \cC_2)$ is the colored operad $\Free(\cP(\cC_1, \cC_2)[-1])$ equipped with the above differential.
\end{defn}

We define the $L_\infty$ algebra $\cL(\cC_1, \cC_2; A, B)$ as follows. As a complex,
\[\cL(\cC_1, \cC_2; A, B) = \Conv(\cC_1; A)\oplus \Conv(\cC_2\{n\}; B)\oplus \Hom(\cC_1(A)\otimes \cC_2^{\cu}\{n\}(B), B)[-n-1].\]

The $L_\infty$ operations are given by the following rule:
\begin{itemize}
\item The first two terms have the standard convolution algebra brackets.

\item The first two terms act on the third term by precomposition, i.e. by using the pre-Lie structure on the convolution algebras.

\item The second term acts on the third term by post-composition.

\item Given $R_1,\dots, R_q\in \Conv(\cC_2\{n\}; B)$ and $T_1,\dots, T_r\in \Hom(\cC_1(A)\otimes \cC_2^{cu}\{n\}(B), B)$, their bracket is
\[[R_1, \dots, R_q. T_1, \dots, T_r](X\otimes Y; a_1, \dots, a_m, b_1, \dots, b_l)\]
for $X\in \cC_1(m)$ and $Y\in\cC_2^{\cu}\{n\}(l)$ is given by the sum over pitchforks $\t\in\Isom_\pitchfork(m, r)$ where each term is given as follows. Let $\Delta_\t(X)=X_{(0)}\otimes \dots$ where $X_{(0)}$ is assigned to the root and recall the tree $\t\circ_0 F(\s X_{(0)})$. The value of the bracket is given by the sum over all ways of assigning $T_1, \dots, T_r$ to the white external vertices and $R_1, \dots, R_q$ to the black internal vertices of $\t\circ_0 F(\s X_{(0)})$ and then reading off the composition using the pattern given by the tree.
\end{itemize}

\begin{prop}
The space of morphisms $\Map_{2\Op_k}(\SC(\cC_1, \cC_2), \End_{A, B})$ is equivalent to the space of Maurer--Cartan elements in the $L_\infty$ algebra $\cL(\cC_1, \cC_2; A, B)$.
\label{prop:SCcylinderla}
\end{prop}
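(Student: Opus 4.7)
The plan is to mimic the proof of Proposition \ref{prop:convolutionla}, using the fact that $\SC(\cC_1,\cC_2)$ is semi-free by construction: as a graded colored operad it equals $\Free(P(\cC_1,\cC_2)[-1])$. Therefore a morphism of colored operads $\SC(\cC_1,\cC_2)\to \End_{A,B}$ is the same as a morphism of colored symmetric sequences $P(\cC_1,\cC_2)[-1]\to \End_{A,B}$ whose canonical extension to $\Free(P(\cC_1,\cC_2)[-1])$ is compatible with the differential $\d=\d_1+\d_2+\d_3+\d_4$. The first step is therefore to unpack this data: a map of symmetric sequences splits into three components, and after the usual shift and adjunction these are precisely a degree $1$ element $(\alpha,\beta,\gamma)$ of
\[
\cL(\cC_1,\cC_2;A,B)=\Conv(\cC_1;A)\oplus \Conv(\cC_2\{n\};B)\oplus \Hom(\cC_1(A)\otimes \cC_2^{\cu}\{n\}(B),B)[-n-1].
\]

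Next I would translate the four pieces of $\d$ into the four types of terms in the Maurer--Cartan equation. The components $\d_1$ on the three families of generators produce the linear (differential) part. The components $\d_2$ and $\d_3$ are cobar-type differentials on the $\cA$-colored and $\cB$-colored generators, so by exactly the same calculation as in the proof of Proposition \ref{prop:convolutionla} their compatibility yields the Maurer--Cartan equation for $\alpha$ in $\Conv(\cC_1;A)$ and for $\beta$ in $\Conv(\cC_2\{n\};B)$. The nontrivial mixed terms come from $\d_2,\d_3$ acting on the mixed generators $\s^{-n}X\otimes Y$ (giving the pre-Lie actions of $\alpha,\beta$ on $\gamma$ by pre- and post-composition), and from $\d_4$.

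The main obstacle is matching $\d_4$ with the higher $L_\infty$ brackets of the third summand, since $\d_4$ is defined via pitchforks, the Hopf structure on $\cC_2^{\cu}$, and the morphism $F\colon \Omega\cC_1\to \Br_{\cC_2}\{n\}$. Here I would argue that under the identification of the generator $\s^{-n}X\otimes Y$ with $\gamma(X\otimes Y)$, the image of $\d_4(\s^{-n}X\otimes Y)$ in $\End_{A,B}$ is exactly the pitchfork-indexed sum appearing in the definition of the $L_\infty$-brackets $[R_1,\dots,R_q.T_1,\dots,T_r]$ on $\cL(\cC_1,\cC_2;A,B)$: the vertex labels $X_{(i)}\otimes Y_{(i)}Z_{(i)}$ on external vertices are where the inputs $T_j=\gamma$ are inserted, while the internal vertices labeled purely by $\cC_2$ elements receive the inputs $R_i=\beta$. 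Thus $\d_4$-compatibility is exactly the vanishing of the bracket $[R_1,\dots,R_q.T_1,\dots,T_r]$-terms in the MC equation for $\gamma$, together with the terms in which $\alpha$ appears through the $X_{(0)}$ label of the root.

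Finally, to pass from the set of strict morphisms to the \emph{space} of morphisms, one uses the standard simplicial enrichment: since $\Omega\cC_1$ is cofibrant (by conilpotency of $\cC_1$) and the entire $\SC(\cC_1,\cC_2)$ is semi-free, hence cofibrant, one has
\[
\Map_{2\Op_k}(\SC(\cC_1,\cC_2),\End_{A,B})\cong \Hom_{2\Op_k}(\SC(\cC_1,\cC_2),\End_{A,B}\otimes \Omega_\bullet),
\]
and the right-hand side, by the above identification applied levelwise, is the simplicial set of Maurer--Cartan elements of $\cL(\cC_1,\cC_2;A,B)\otimes \Omega_\bullet$, i.e.\ $\underline{\MC}(\cL(\cC_1,\cC_2;A,B))$. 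Admissibility of the natural arity filtration on $\cL$, inherited from the coradical filtration on $\cC_1$ and $\cC_2$, ensures that the $L_\infty$-structure is well-defined on the completion and the MC space makes sense, just as in Lemma \ref{lm:conilpotentadmissible}.
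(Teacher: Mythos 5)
Your proposal is correct and follows exactly the route the paper intends: the paper omits this proof, saying only that it is ``similar to the proof of Proposition \ref{prop:convolutionla}'', and your unpacking of the semi-free generators and of the four differentials $\d_1,\dots,\d_4$ into the linear, quadratic and higher terms of the Maurer--Cartan equation in $\cL(\cC_1,\cC_2;A,B)$ is that argument carried out. One minor correction: $\alpha\in\Conv(\cC_1;A)$ does not enter the $\d_4$-terms at all --- in the trees $\tilde{\t}$ every component $X_{(i)}$ of $\Delta_\t(X)$ is absorbed into the label $X_{(i)}\otimes Y_{(i)}Z_{(i)}$ of an external vertex, which is a mixed generator and hence a $\gamma$-slot, so $\d_4$-compatibility matches exactly the brackets $[R_1,\dots,R_q.T_1,\dots,T_r]$ involving only $\beta$ and $\gamma$, the action of $\alpha$ being entirely accounted for by $\d_2$.
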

The proof of this Proposition is similar to the proof of Proposition \ref{prop:convolutionla}, so we omit it.

Let $B$ be a $\Omega(\cC_2\{n\})$-algebra and consider its center $\bZ(B)$ (see Definition \ref{def:center}), which is a $\Br_{\cC_2}\{n\}$-algebra. Using the morphism
\[F\colon \Omega\cC_1\rightarrow \Br_{\cC_2}\{n\}\]
one defines an $\Omega\cC_1$-algebra structure on $\bZ(B)$. From Proposition \ref{prop:SCcylinderla} and the formulas for the $L_\infty$ brackets we get the following description of $\SC(\cC_1, \cC_2)$-algebras.

\begin{cor}
An algebra over the colored operad $\SC(\cC_1, \cC_2)$ is an $\Omega\cC_1$-algebra $A$, an $\Omega(\cC_2\{n\})$-algebra $B$ and an $\infty$-morphism of $\Omega\cC_1$-algebras $A\rightarrow \bZ(B)$.
\label{cor:SCalgebra}
\end{cor}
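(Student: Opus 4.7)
The plan is to apply Proposition \ref{prop:SCcylinderla} and unpack the resulting Maurer--Cartan condition into the three pieces of data. An $\SC(\cC_1,\cC_2)$-algebra structure on the pair $(A,B)$ is by definition a morphism of colored operads $\SC(\cC_1,\cC_2)\to \End_{A,B}$, which by the proposition corresponds to a Maurer--Cartan element of the $L_\infty$ algebra
\[
\cL(\cC_1,\cC_2;A,B) = \Conv(\cC_1;A)\oplus \Conv(\cC_2\{n\};B)\oplus \Hom(\cC_1(A)\otimes \cC_2^{\cu}\{n\}(B),B)[-n-1].
\]
I would write such a Maurer--Cartan element as a triple $(\phi_1,\phi_2,\phi_3)$ and analyze the three components of the Maurer--Cartan equation separately.

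Next, I would observe that since the $L_\infty$ brackets preserve the direct-sum decomposition in a controlled way (the first two summands are closed under the bracket with themselves, and their brackets with the third summand land in the third summand), the MC equation splits. Restricted to the first summand it says that $\phi_1$ is Maurer--Cartan in the convolution Lie algebra $\Conv(\cC_1;A)$, which by Proposition \ref{prop:convolutionla} is exactly the datum of an $\Omega\cC_1$-algebra structure on $A$. Restricted to the second summand it says $\phi_2$ is Maurer--Cartan in $\Conv(\cC_2\{n\};B)$, equivalently an $\Omega(\cC_2\{n\})$-algebra structure on $B$.

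The main point, and the only step that is not a direct quotation, is to identify the remaining part of the MC equation---the equation in the third summand, coupled to $\phi_1$ and $\phi_2$---with the datum of an $\infty$-morphism of $\Omega\cC_1$-algebras $\phi_3\colon A\to \bZ(B)$. By adjunction, a degree $1$ element of $\Hom(\cC_1(A)\otimes \cC_2^{\cu}\{n\}(B),B)[-n-1]$ is the same as a degree $0$ map of graded objects $\cC_1(A)\to \Conv^0(\cC_2^{\cu}\{n\};B)[-n]=\bZ(B)$; this is precisely the data underlying an $\infty$-morphism of $\Omega\cC_1$-algebras into the $\Omega\cC_1$-algebra $\bZ(B)$, whose structure comes from the morphism $F\colon \Omega\cC_1\to \Br_{\cC_2}\{n\}$ and Definition \ref{def:center}. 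What must be checked is that the $L_\infty$ bracket on $\cL$ involving the third summand---whose formula is given by the sum over pitchforks $\t\in\Isom_\pitchfork(m,r)$ with trees $\t\circ_0 F(\s X_{(0)})$ and composition patterns determined by $F$---exactly reproduces the bar differential on $\cC_1(A)$ postcomposed with $\phi_3$, minus the pullback along $\phi_3$ of the codifferential on $\cC_1(\bZ(B))$ induced by the $\Omega\cC_1$-structure on $\bZ(B)$.

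The expected obstacle is this last matching: it is a bookkeeping argument in which one has to recognize that the pitchfork terms in the $L_\infty$ brackets of $\cL(\cC_1,\cC_2;A,B)$ are set up so as to encode the composition $\Omega\cC_1\xrightarrow{F}\Br_{\cC_2}\{n\}\to \End_{\bZ(B)}$ that produces the $\Omega\cC_1$-action on $\bZ(B)$. In practice I would proceed by comparing the pitchfork-indexed bracket formulas with the definition of the $\Br_{\cC_2}\{n\}$-action on the convolution algebra given in Section \ref{sect:braces} (through which $\bZ(B)$ becomes an $\Omega\cC_1$-algebra); both are defined by the same combinatorics of rooted trees with external and internal vertices, with external vertices carrying the inputs from $A$ and internal vertices carrying the structure maps of $B$. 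Once this identification of formulas is made, the twisted Maurer--Cartan equation for $\phi_3$ becomes precisely the statement that the induced map of cofree coalgebras $\cC_1(A)\to \cC_1(\bZ(B))$ commutes with codifferentials, which is the definition of an $\infty$-morphism of $\Omega\cC_1$-algebras $A\to \bZ(B)$. This concludes the identification and hence the corollary.
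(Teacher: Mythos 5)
Your proposal is correct and follows exactly the route the paper takes: the paper derives the corollary directly from Proposition \ref{prop:SCcylinderla} together with the explicit formulas for the $L_\infty$ brackets on $\cL(\cC_1,\cC_2;A,B)$, decomposing a Maurer--Cartan element into the algebra structures on $A$ and $B$ plus the $\infty$-morphism $A\rightarrow\bZ(B)$. Your write-up simply makes explicit the bookkeeping (adjunction to $\Hom(\cC_1(A),\bZ(B))$ and the matching of the pitchfork terms with the $\Br_{\cC_2}\{n\}$-action through $F$) that the paper leaves to the reader.
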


Let us now define a small model of the colored operad $\SC(\cC_1, \cC_2)$. Suppose $\Omega\cC_1\rightarrow \cO_1$ and $\Omega(\cC_2\{n\})\rightarrow \cO_2$ are quasi-isomorphisms of operads. An algebra over $\SC(\cC_1, \cC_2)$ is a triple consisting of a homotopy $\cO_1$-algebra $A$, a homotopy $\cO_2$-algebra $B$ and an $\infty$-morphism of homotopy $\cO_1$-algebras $A\rightarrow \bZ(B)$. Define the colored operad $\SC(\cO_1, \cO_2)$ to be the quotient of $\SC(\cC_1, \cC_2)$ whose algebras are triples consisting of a \emph{strict} $\cO_1$-algebra $A$, a \emph{strict} $\cO_2$-algebra $B$ and a \emph{strict} morphism of homotopy $\cO_1$-algebras $A\rightarrow \bZ(B)$.

For the following statement we assume that $\cC_i$ and $\cO_i$ admit an increasing exhaustive filtration $\{F_n\}_{n\geq 0}$ satisfying the following properties:
\begin{enumerate}
\item $F_0 \cC_i=\bu$ and $F_0 \cO_i=\bu$.

\item The morphisms $\Omega\cC_i\rightarrow \cO_i$ are compatible with filtrations.
\end{enumerate}

\begin{prop}
The projection $\SC(\cC_1, \cC_2)\rightarrow \SC(\cO_1, \cO_2)$ is a quasi-isomorphism.
\label{prop:smallSC}
\end{prop}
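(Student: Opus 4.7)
The natural strategy is to introduce compatible filtrations on $\SC(\cC_1, \cC_2)$ and $\SC(\cO_1, \cO_2)$ coming from the filtrations $F_\bullet$ on the $\cC_i$ and $\cO_i$, and compare the associated gradeds via a spectral sequence argument. Concretely, on $\SC(\cC_1, \cC_2)$ I would declare a tree-shaped operation to lie in filtration degree $\leq N$ if the sum of filtration weights of the $\cC_1$- and $\cC_2$-labels at its vertices (including the $\cC_1\otimes \cC_2^{\cu}$-labels on mixed generators) is at most $N$. A parallel filtration works on $\SC(\cO_1, \cO_2)$ using the filtrations on $\cO_i$, and the projection clearly preserves filtrations because the maps $\Omega\cC_i\to\cO_i$ are assumed compatible with $F_\bullet$.

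The first task is to show that all four pieces $\d_1,\d_2,\d_3,\d_4$ of the differential on $\SC(\cC_1, \cC_2)$ respect the filtration. For $\d_1,\d_2,\d_3$ this is standard since the cooperadic cocompositions on $\cC_1$ and $\cC_2$ respect $F_\bullet$ (because $F_0 = \bu$, so cocompositions can only increase filtration degree). For $\d_4$, which uses the operad map $F\colon \Omega\cC_1 \to \Br_{\cC_2}\{n\}$, we need that $F$ is compatible with filtrations; this should follow from universality of $\Omega\cC_1$ together with $F_0\cC_i=\bu$ and an induction on filtration degree.

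On the associated graded, the bracketing parts of the differentials (i.e.\ the pieces that mix two labels) drop out whenever they would raise filtration degree. What survives in the $E^0$-page is: in arity $(\cA^{\otimes m},\cA)$ the cobar complex $\Omega\cC_1$ mapping to $\cO_1$; in arity $(\cB^{\otimes l},\cB)$ the cobar complex $\Omega(\cC_2\{n\})$ mapping to $\cO_2$; and in the mixed arity a complex built as a composite of pieces coming from the two individual resolutions connected by the mixed generators $\cC_1(m)\otimes \cC_2^{\cu}\{n\}(l)[n+1]$. In each case, the map on associated graded is built out of the quasi-isomorphisms $\Omega\cC_1\to\cO_1$ and $\Omega(\cC_2\{n\})\to\cO_2$ either directly or via tensor products and free-operad constructions, so a Künneth-style argument together with the standard fact that the free operad functor preserves quasi-isomorphisms on cofibrant input gives that the $E^1$-page of the projection is an isomorphism.

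Finally, one needs to check convergence of the spectral sequence. Since the filtration is bounded below (it starts at filtration degree $0$, where only the identity operations live) and exhaustive in each fixed arity (because in a given arity $(\cA^{\otimes m}\otimes \cB^{\otimes l},-)$ only finitely many filtration levels of $\cC_1,\cC_2$ can contribute to any tree of bounded internal complexity, and the total arity bounds the number of vertices), standard convergence of a filtered complex spectral sequence concludes the proof. The main obstacle is the analysis of the mixed-arity $E^0$-page: one has to unwind the precise form of $\d_4$ on associated graded to see that it encodes a composite of the two individual cobar resolutions joined by the surviving pieces of $F$, and check that the projection to the analogous mixed part of $\SC(\cO_1,\cO_2)$ is a quasi-isomorphism arity by arity.
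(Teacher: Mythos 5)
Your filtration/spectral-sequence strategy is genuinely different from the paper's argument, and as written it has two concrete gaps, both located exactly where the real content of the proposition sits. First, for your filtration to be preserved by the differential you need the Calaque--Willwacher-type morphism $F\colon \Omega\cC_1\rightarrow \Br_{\cC_2}\{n\}$ (which enters through $\d_4$) to be compatible with filtrations. This is not among the hypotheses: the stated assumptions only concern the maps $\Omega\cC_i\rightarrow\cO_i$, and ``universality of $\Omega\cC_1$'' does not produce filtration-compatibility of $F$ --- a map out of a free operad is determined by its values on generators, but nothing forces those values to respect a filtration on $\Br_{\cC_2}\{n\}$. You would have to construct a suitable filtration on the brace operad and verify compatibility by hand. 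Second, and more seriously, the mixed-arity $E^1$-comparison is not a K\"unneth/free-operad situation: the differential $\d_4$ couples the $\cC_1$-labels to the $\cC_2$-labels through $F$ and the Hopf structure, and under the filtration you propose (where the cooperadic cocompositions typically \emph{preserve} total filtration degree rather than strictly lower it) most of this coupling survives on the associated graded. So the $E^0$-page in mixed arity is not visibly a tensor product of the two individual resolutions, and the assertion that the $E^1$-map is an isomorphism is precisely the statement you set out to prove. You acknowledge this is ``the main obstacle'' but do not resolve it.

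For comparison, the paper avoids a direct spectral sequence on the whole object by factoring the projection through an intermediate operad $\SC'(\cO_1,\cO_2)$ (strict $\cO_1$- and $\cO_2$-algebras, but still an $\infty$-morphism $A\rightarrow\bZ(B)$). The first map $\SC(\cC_1,\cC_2)\rightarrow\SC'(\cO_1,\cO_2)$ is handled by exhibiting $\SC'(\cO_1,\cO_2)$ as a pushout of $\SC(\cC_1,\cC_2)$ along the cofibration from $(\Omega\cC_1)_{\cA}\oplus(\Omega(\cC_2\{n\}))_{\cB}$, which is a weak equivalence for formal model-categorical reasons. The second map is then analyzed arity by arity: in arities $(\cA^{\otimes -}\otimes\cB^{\otimes l},\cB)$ it is identified with the twisted composite product $\cC_1\circ_{\d}\cO_1\rightarrow\bu$, whose acyclicity is the fundamental theorem of bar--cobar duality (\cite[Theorem 6.6.2]{LV}); this is where the filtration hypotheses are actually consumed. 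If you want to salvage your approach, the realistic path is to reorganize it so that the mixed-arity comparison is reduced to exactly this acyclicity statement rather than to a K\"unneth argument.
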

\begin{proof}
Consider an intermediate operad $\SC'(\cO_1, \cO_2)$ whose algebras are triples consisting of a strict $\cO_1$-algebra $A$, a strict $\cO_2$-algebra $B$ and an \emph{$\infty$-morphism} of homotopy $\cO_1$-algebras $A\rightarrow \bZ(B)$. Thus, we have a sequence of projections
\[\SC(\cC_1, \cC_2)\longrightarrow \SC'(\cO_1, \cO_2)\longrightarrow \SC(\cO_1, \cO_2).\]
We will prove that each morphism is a quasi-isomorphism.

Define the colored operad $(\cO_1)_{\cA}\oplus (\cO_2)_{\cB}$ whose set of colors is $\{\cA, \cB\}$ and whose algebras are given by pairs of an $\cO_1$-algebra and an $\cO_2$-algebra and similarly for $(\Omega\cC_1)_{\cA}\oplus (\Omega(\cC_2\{n\}))_{\cB}$. By construction we have a pushout of operads.
\[
\xymatrix{
(\Omega\cC_1)_{\cA}\oplus (\Omega(\cC_2\{n\}))_{\cB} \ar[d] \ar^-{\sim}[r] & (\cO_1)_{\cA}\oplus (\cO_2)_{\cB} \ar[d] \\
\SC(\cC_1, \cC_2) \ar[r] & \SC'(\cO_1, \cO_2)
}
\]

But the left vertical morphism is a cofibration and the top morphism is a quasi-isomorphism, hence the bottom morphism $\SC(\cC_1, \cC_2)\rightarrow \SC'(\cO_1, \cO_2)$ is a quasi-isomorphism.

We can identify the projection $\SC'(\cO_1, \cO_2)$ in arities $(\cA^{\otimes -}\otimes \cB^0, \cB)$ with the morphism of symmetric sequences $\cC_1\circ_{\d} \cO_1\rightarrow \bu$, where the differential $\d$ comes from the morphism $\Omega\cC_1\rightarrow \cO_1$. But since the latter morphism is a quasi-isomorphism, by \cite[Theorem 6.6.2]{LV} the former morphism is a quasi-isomorphism as well. The claim in arities $(\cA^{\otimes -}\otimes \cB^l, \cB)$ is proved similarly.
\end{proof}

\subsection{Relative Poisson algebras}
\label{sect:relPnalgebras}

In this section we define the main operad to be used in this paper.

\begin{defn}
Let $B$ be a $\bP_n$-algebra. Its \emph{strict Poisson center} is defined to be the $\bP_{n+1}$-algebra
\[\rZ(B) = \Hom_{\mod_B}(\Sym_B(\Omega^1_B[n]), B).\]
\end{defn}
The Lie bracket on $\rZ(B)$ is given by the Schouten bracket of polyvector fields, we refer to \cite[Section 1.1]{Sa1} for explicit formulas.

\begin{defn}
A unital $\bP_{[n+1, n]}$-algebra consists of the following triple:
\begin{itemize}
\item A unital $\bP_{n+1}$-algebra $A$,
\item A unital $\bP_n$-algebra $B$,
\item A morphism of unital $\bP_{n+1}$-algebras $f\colon A\rightarrow \rZ(B)$.
\end{itemize}
\end{defn}
We denote by $\bP_{[n+1, n]}$ the colored operad controlling such algebras, see \cite[Section 1.3]{Sa1} for explicit relations in the operad. Similarly, we denote by $\bP_{[n+1, n]}^{\nu}$ the non-unital version of this operad.

Given a $\bP_{[n+1, n]}$-algebra $(A, B, f)$, the morphism
\[A\longrightarrow \rZ(B)\longrightarrow B\]
is strictly compatible with the multiplications, so it defines a forgetful functor
\[\alg_{\bP_{[n+1, n]}}\longrightarrow \Arr(\alg_{\Comm}).\]

Our goal now is to define a cofibrant resolution of the operad $\bP_{[n+1, n]}$. The cooperad $\coP_n^\theta$ of curved non-unital $\bP_n$-coalgebras has a Hopf counital structure $\coP_n^{\theta, \cu}$ given by the cooperad of curved counital $\bP_n$-coalgebras. Calaque and Willwacher \cite{CW} define a morphism of operads
\begin{equation}
\Omega(\coP_{n+1}^\theta\{1\})\rightarrow \Br_{\coP_n^\theta}
\label{eq:CWmorphism}
\end{equation}
on the generators by the following rule:
\begin{itemize}
\item The generators \[x\in\coLie^\theta\{1-n\}(k)\subset \coP_{n+1}^\theta\{1\}(k)\] are sent to the tree drawn in Figure \ref{fig:centerhomotopymultiplication} with the root labeled by the element \[x\in\coLie^\theta\{1-n\}(k)\subset \coP_n^\theta(k).\]

\item The generator \[x_1\wedge x_2\in\coComm\{1\}(2)\subset \coP^\theta_{n+1}\{1\}(2)\] is sent to the linear combination of trees shown in Figure \ref{fig:centerhomotopybracket}.

\item Given $y\in \coLie^\theta\{1-n\}(k-1)\subset \coP_{n+1}^\theta\{1\}(k-1)$ for $k>2$, we denote by $x\wedge y$ its image under $\coP_{n+1}^\theta\{1\}(k-1)\rightarrow \coP_{n+1}^\theta\{1\}(k)$.

The generators \[x\wedge y\in\coP_{n+1}^\theta\{1\}(k)\] are sent to the tree shown in Figure \ref{fig:centerhomotopyleibniz} with the root labeled by the element
\[y\in\coLie^\theta\{1-n\}(k-1)\subset \coP_n^\theta(k-1).\]

\item The rest of the generators are sent to zero.
\end{itemize}

\begin{figure}
\begin{minipage}{.3\textwidth}
\centering
\begin{tikzpicture}
\node[b] (v0) at (0, 0) {};
\node (v) at (0, -1) {};
\node[w] (v1) at (-1, 1) {$1$};
\node at (0, 1) {\dots};
\node[w] (v2) at (1, 1) {$k$};
\draw (v0) edge (v);
\draw (v0) edge (v1);
\draw (v0) edge (v2);
\end{tikzpicture}
\caption{Image of $\underline{x_1\dots x_k}$.}
\label{fig:centerhomotopymultiplication}
\end{minipage}
\begin{minipage}{.3\textwidth}
\centering
\begin{tikzpicture}
\node[w] (v1) at (-1, 0) {$1$};
\node[w] (v2) at (-1, 1) {$2$};
\node (root) at (-1, -1) {};
\draw (v1) edge (v2);
\draw (v1) edge (root);
\node at (0, 0) {$-(-1)^n$};
\node[w] (v2) at (1, 0) {$2$};
\node[w] (v1) at (1, 1) {$1$};
\node (root) at (1, -1) {};
\draw (v2) edge (v1);
\draw (v2) edge (root);
\end{tikzpicture}
\caption{Image of $x_1\wedge x_2$.}
\label{fig:centerhomotopybracket}
\end{minipage}
\begin{minipage}{.3\textwidth}
\centering
\begin{tikzpicture}
\node[w] (v0) at (0, 0) {$1$};
\node (v) at (0, -1) {};
\node[w] (v1) at (-1, 1) {$2$};
\node at (0, 1) {\dots};
\node[w] (v2) at (1, 1) {$k$};
\draw (v0) edge (v);
\draw (v0) edge (v1);
\draw (v0) edge (v2);
\end{tikzpicture}
\caption{Image of $x_1\wedge\underline{x_2\dots x_k}$}
\label{fig:centerhomotopyleibniz}
\end{minipage}
\end{figure}

Note that the composite
\[\Omega(\coComm\{n+1\})\subset \Omega(\coP_{n+1}\{n+1\})\rightarrow \Br_{\coP_n}\{n\}\]
gives a strict Lie structure and it coincides with the morphism $\Lie\rightarrow \preLie$ as easily seen from Figure \ref{fig:centerhomotopybracket}. We define
\[\widetilde{\bP}_{[n+1, n]} = \SC(\coP_{n+1}^\theta\{n+1\}, \coP_n^\theta),\]
so by Corollary \ref{cor:SCalgebra} a $\widetilde{\bP}_{[n+1, n]}$-algebra is a homotopy unital $\bP_{n+1}$-algebra $A$, a homotopy unital $\bP_n$-algebra $B$ and an $\infty$-morphism of homotopy unital $\bP_{n+1}$-algebras $A\rightarrow \bZ(B)$.

Similarly, one has a morphism of operads
\[\Omega(\coP_{n+1}\{1\})\rightarrow \Br_{\coP_n}\]
and thus we can define the non-unital version of the operad $\widetilde{\bP}^{\nu}_{[n+1, n]}$:
\[\widetilde{\bP}^{\nu}_{[n+1, n]} = \SC(\coP_{n+1}\{n+1\}, \coP_n).\]

\begin{prop}
The natural morphism of colored operads
\[\widetilde{\bP}^{\nu}_{[n+1, n]}\longrightarrow \bP^{\nu}_{[n+1, n]}\]
is a quasi-isomorphism.
\label{prop:relPnresolution}
\end{prop}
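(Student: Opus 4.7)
The plan is to factor the morphism $\widetilde{\bP}^{\nu}_{[n+1,n]}\to \bP^{\nu}_{[n+1,n]}$ through the intermediate colored operad $\SC(\bP^{\nu}_{n+1},\bP^{\nu}_n)$ supplied by Proposition \ref{prop:smallSC}, and then identify this intermediate operad with $\bP^{\nu}_{[n+1,n]}$.

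First I would verify the hypotheses of Proposition \ref{prop:smallSC} with $\cC_1=\coP_{n+1}\{n+1\}$, $\cC_2=\coP_n$, $\cO_1=\bP^{\nu}_{n+1}$ and $\cO_2=\bP^{\nu}_n$. The cooperads $\coP_{n+1}\{n+1\}$ and $\coP_n$ carry the coradical filtration, which coincides with the arity filtration since both cooperads are Koszul quadratic; the operads $\bP^{\nu}_{n+1}$ and $\bP^{\nu}_n$ carry the arity filtration; and the Koszul quasi-isomorphisms $\Omega(\coP_{n+1}\{n+1\})\to \bP^{\nu}_{n+1}$ and $\Omega(\coP_n\{n\})\to \bP^{\nu}_n$ preserve these filtrations since they send arity-$m$ generators to arity-$m$ operations. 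Applying Proposition \ref{prop:smallSC} then yields a quasi-isomorphism
\[
\widetilde{\bP}^{\nu}_{[n+1,n]} = \SC(\coP_{n+1}\{n+1\},\coP_n) \stackrel{\sim}\longrightarrow \SC(\bP^{\nu}_{n+1},\bP^{\nu}_n).
\]

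Second I would identify $\SC(\bP^{\nu}_{n+1},\bP^{\nu}_n)$ with $\bP^{\nu}_{[n+1,n]}$. By the strict version of Corollary \ref{cor:SCalgebra}, an $\SC(\bP^{\nu}_{n+1},\bP^{\nu}_n)$-algebra is a triple $(A,B,F)$ consisting of strict Poisson algebras $A, B$ and a strict morphism $F\colon A\to \bZ(B)$ of $\Omega(\coP_{n+1}\{n+1\})$-algebras, where the $\Omega(\coP_{n+1}\{n+1\})$-structure on $\bZ(B)$ is induced through the Calaque--Willwacher morphism \eqref{eq:CWmorphism}. Inspecting the three families of generators in Figures \ref{fig:centerhomotopymultiplication}, \ref{fig:centerhomotopybracket} and \ref{fig:centerhomotopyleibniz}, one sees that the induced homotopy $\bP^{\nu}_{n+1}$-structure on $\bZ(B)$ restricts, along the canonical inclusion $\rZ(B)\hookrightarrow \bZ(B)$ of strict polyvector fields, to the strict Schouten structure on $\rZ(B)$, while the constraints imposed by strictness of the Poisson structure on $A$ and of the morphism $F$ force $F$ to factor through $\rZ(B)$. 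This identifies such a datum with a strict $\bP^{\nu}_{n+1}$-morphism $A\to \rZ(B)$, i.e. a $\bP^{\nu}_{[n+1,n]}$-algebra, giving an isomorphism of colored operads.

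The main obstacle is the second step: one must trace the Calaque--Willwacher morphism through the Swiss-cheese differentials (in particular $\d_4$ of Section \ref{sect:SCalgebras}) and verify that the strict compatibility conditions reduce exactly to the relations defining $\bP^{\nu}_{[n+1,n]}$ with no extra ones. A possibly cleaner alternative would be to introduce matching arity filtrations on both operads and show the comparison is an isomorphism on associated graded, thereby reducing the problem to the well-known correspondence between polyvector fields and the weight-graded convolution algebra already implicit in the proof of Proposition \ref{prop:smallSC}.
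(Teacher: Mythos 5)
Your first step coincides exactly with the paper's: factor the map through $\SC(\bP^{\nu}_{n+1},\bP^{\nu}_n)$ (the paper calls this intermediate operad $\bP^w_{[n+1,n]}$) and apply Proposition \ref{prop:smallSC}. The gap is in your second step: the claim that $\SC(\bP^{\nu}_{n+1},\bP^{\nu}_n)$ is \emph{isomorphic} to $\bP^{\nu}_{[n+1,n]}$ is false. An algebra over $\SC(\bP^{\nu}_{n+1},\bP^{\nu}_n)$ is a strict $\bP_{n+1}$-algebra $A$, a strict $\bP_n$-algebra $B$, and a \emph{strict} morphism $A\to\bZ(B)$ into the \emph{homotopy} center, which as a complex is the twisted convolution algebra $\Conv^0(\coP_n^{\theta,\cu}\{n\};B)$, built from the cofree $\coP_n$-coalgebra on $B$. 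Strictness of the Poisson structures does not force such a morphism to factor through the strict center $\rZ(B)=\Hom_B(\Sym_B(\Omega^1_B[n]),B)$: the two centers have genuinely different underlying complexes (Harrison/Poisson-homology type versus K\"ahler type), related only by a projection. Concretely, in arity $(\cA\otimes\cB^{\otimes l},\cB)$ the intermediate operad is the coefficient of $V^{\otimes l}$ in the Poisson homology complex $\C^{\bP_n}_\bullet(\bP_n(V),\bP_n(V))$, while $\bP^{\nu}_{[n+1,n]}$ is the coefficient of $V^{\otimes l}$ in the canonical complex $\C^{can}_\bullet(\bP_n(V),\bP_n(V))$; these are not isomorphic, and the map between them is only a quasi-isomorphism.

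Establishing that quasi-isomorphism is precisely the nontrivial content of the second half of the paper's proof: it uses that $\bP_n(V)$ is a free $\bP_n$-algebra, for which the projection from the Poisson homology complex to the canonical complex is a quasi-isomorphism (Fresse). Your proposed fallback of an arity filtration with an isomorphism on associated graded cannot rescue the argument either, since the map fails to be an isomorphism on underlying graded objects in any single arity; what is needed is an acyclicity statement for the kernel, i.e.\ essentially the Koszulness of $\bP_n$ in the guise of the comparison between Harrison-type chains and K\"ahler differentials for free algebras. So your outline is structurally right about where the difficulty lies, but the step you flag as "the main obstacle" is not a verification that an isomorphism holds --- it is a homological comparison that must be proved, and your proposal does not supply it.
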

\begin{proof}
Let $\bP^w_{[n+1, n]}$ be the colored operad obtained as a quotient of $\widetilde{\bP}^{\nu}_{[n+1, n]}$ whose algebras are strict non-unital $\bP_{n+1}$-algebras $A$, strict non-unital $\bP_n$-algebras $B$ and a strict morphism of homotopy $\bP_{n+1}$-algebras $A\rightarrow \bZ(B)$. We have morphisms of colored operads
\[\widetilde{\bP}^{\nu}_{[n+1, n]}\longrightarrow \bP^w_{[n+1, n]}\longrightarrow \bP^{\nu}_{[n+1, n]},\]
where the first morphism is a quasi-isomorphism by Proposition \ref{prop:smallSC} where we note that $\bP^w_{[n+1, n]} = \SC(\bP^{\nu}_{n+1}, \bP^{\nu}_n)$. Therefore, it is enough to show that the second morphism is a quasi-isomorphism. For this it is enough to show that the morphism in arities $(\cA^{\otimes m}\otimes \cB^{\otimes -}, \cB)$ is a quasi-isomorphism. The following argument is similar to the proof of \cite[Proposition 3.4]{Sa2}.

The relevant morphism is an isomorphism if $m=0$. We will give the proof for $m=1$ since the case of higher $m$ is similar.

If $B$ is a $\bP_n$-algebra and $M$ a $\bP_n$-module over $B$, we can consider its Poisson homology $\C^{\bP_n}_\bullet(B, M)$ which as a graded vector space is isomorphic to
\[\coP_n(M[n])\otimes_k B\cong \Sym_{\geq 1}(\coLie(M[1])[n-1])\otimes_k B.\]
We also have the \emph{canonical} chain complex $\C^{can}_\bullet(B, M)$ which as a graded vector space is isomorphic to $\Sym_{\geq 1}(\Omega^1_M[n])\otimes_k B$. We refer to \cite[Section 1.3, Section 1.4.2]{Fre} for explicit formulas for the differentials on these complexes.

Consider an arbitrary complex $V$. We can identify the colored operad $\bP^w_{[n+1, n]}$ in arity $(\cA^{\otimes 1}\otimes \cB^{\otimes l}, \cB)$ with the coefficient of $V^{\otimes l}$ in $\C^{\bP_n}_\bullet(\bP_n(V), \bP_n(V))$. Similarly, we can identify the colored operad $\bP^{\nu}_{[n+1, n]}$ in the same arity with the coefficient of $V^{\otimes l}$ in $\C^{can}_\bullet(\bP_n(V), \bP_n(V))$. But since $\bP_n(V)$ is free, the natural projection
\[\C^{\bP_n}_\bullet(\bP_n(V), \bP_n(V))\rightarrow \C^{can}_\bullet(\bP_n(V), \bP_n(V))\]
is a quasi-isomorphism which proves the claim.
\end{proof}

One proves similarly that the morphism of colored operads $\widetilde{\bP}_{[n+1, n]}\rightarrow \bP_{[n+1, n]}$ is a quasi-isomorphism.

Let us now explain how to construct graded versions of these operads. Recall that throughout the paper we consider the grading on $\bP_n$ such that the bracket has weight $-1$ and the multiplication has weight $0$. In this section we consider a different convention where the bracket has weight $0$, multiplication weight $1$ and the unit weight $-1$. In particular, $A$ is a graded $\bP_n$-algebra with respect to the current convention iff $A\otimes k(-1)$ is a graded $\bP_n$-algebra in the original convention.

Define the grading on the Hopf cooperad $\coP_n^{\theta, \cu}$ to be such that the comultiplication has weight $0$ and the cobracket has weight $1$. It is compatible with the Hopf structure making it into a graded Hopf cooperad. Also observe that the morphisms
\[\Omega(\coP_n^{\theta}\{n\})\longrightarrow \bP_n\]
and \eqref{eq:CWmorphism} are compatible with gradings. Thus, if $A$ is a graded $\bP_n$-algebra, its Poisson center $\bZ(A)$ becomes a graded homotopy $\bP_{n+1}$-algebra.

\subsection{From relative Poisson algebras to Poisson algebras}
\label{sect:relpoistopois}

Suppose that one has a $\bP_{[n+1, n]}$-algebra $(A, B, f)$ in $M$. In this section we show how to produce a $\bP_{n+1}$-structure on the homotopy fiber of the underlying map of commutative algebras $A\rightarrow B$.

Recall that the strict Poisson center of a $\bP_n$-algebra $B$ is defined to be the algebra of polyvectors
\[\rZ(B) = \Hom_B(\Sym_B(\Omega^1_B[n]), B)\]
with the differential twisted by the Maurer--Cartan element $[\pi_B, -]$ defining the Poisson bracket.

\begin{defn}
Let $B$ be a $\bP_n$-algebra in $M$. Its \emph{strict deformation complex} is defined to be the algebra of polyvectors
\[\Def(B)= \Hom_B(\Sym_B^{\geq 1}(\Omega^1_B[n]), B)\]
with the differential twisted by $[\pi_B, -]$.
\end{defn}
As for the center, the deformation complex $\Def(B)[-n]$ is a $\bP_{n+1}$-algebra, albeit non-unital.

We have a fiber sequence
\[\Def(B)[-n]\rightarrow \rZ(B)\rightarrow B\]
in $M$. Rotating it, we obtain a homotopy fiber sequence
\begin{equation}
B[-1]\rightarrow \Def(B)[-n]\rightarrow \rZ(B),
\label{eq:centersequence}
\end{equation}
where the morphism $B[n-1]\rightarrow \Def(B)$ is given by $b\mapsto [\pi_B, b]$. Note that $B[n-1]$ is a Lie algebra with respect to the Poisson bracket.

\begin{prop}
Let $B$ be a $\bP_n$-algebra in $M$. Then the morphism
\[B[n-1]\rightarrow \Def(B)\]
given by $b\mapsto [\pi_B, b]$ is a morphism of Lie algebras.
\label{prop:centersequencelie}
\end{prop}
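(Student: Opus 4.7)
The plan is to verify directly that the map $\phi \colon B[n-1] \to \Def(B)$, $\phi(b) = [\pi_B, b]$, sends the shifted Poisson bracket on $B$ to the Schouten bracket on $\Def(B)$. Since both source and target are subcomplexes of the larger graded Lie algebra $\bPol(B, n-1)$, it is cleanest to do the entire computation inside $\bPol(B, n-1)$ equipped with its Schouten bracket.

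First I would spell out the two structures. The Lie algebra $B[n-1]$ carries the shift of the Poisson bracket on $B$, and by construction of the $\bP_n$-structure this bracket coincides with the derived bracket
\[\{a, b\} \;=\; \pm\bigl[[\pi_B, a], b\bigr],\]
where $\pi_B \in \bPol(B, n-1)$ is the Maurer--Cartan element encoding the Poisson bracket, the brackets on the right are Schouten brackets, and the sign depends on our shift conventions. The target Lie structure on $\Def(B) \subset \bPol(B, n-1)$ is induced by the Schouten bracket, with differential twisted by $[\pi_B, -]$, and the map $\phi$ is just $b \mapsto [\pi_B, b]$, which lands in $\Def(B)[-n]$ because $b$ is of polyvector weight $0$ and $[\pi_B, -]$ raises weight by one.

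The key computation is then to show
\[\bigl[[\pi_B, a],\, [\pi_B, b]\bigr] \;=\; \pm\bigl[\pi_B, \{a, b\}\bigr],\]
i.e.\ that Schouten-bracketing two Hamiltonian polyvectors returns the Hamiltonian polyvector of the Poisson bracket. By the graded Jacobi identity for the Schouten bracket,
\[\bigl[[\pi_B, a],\, [\pi_B, b]\bigr] \;=\; \bigl[\pi_B,\, [a, [\pi_B, b]]\bigr] \;\pm\; \bigl[a,\, [\pi_B, [\pi_B, b]]\bigr].\]
The second term vanishes: using Jacobi once more, $[\pi_B,[\pi_B, b]]$ is, up to a nonzero scalar, $[[\pi_B, \pi_B], b]$, which is zero by the Maurer--Cartan equation $[\pi_B, \pi_B] = 0$. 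After adjusting the first term using Jacobi and the definition of the derived bracket, one recovers $\pm[\pi_B, \{a, b\}]$, which is exactly $\phi(\{a, b\})$.

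The only real obstacle is sign bookkeeping: the shifts $[n-1]$, $[-n]$, and the degree of $\pi_B$ (viewed as a degree-$1$ element of $\bPol(B,n-1)[-n]$) all affect the signs in Jacobi. The way I would handle this is to fix the convention that $\pi_B$ has odd degree in the shifted Lie algebra $\bPol(B,n-1)[-n]$, so that the two signs appearing in the Jacobi identity combine correctly, and then transport the identity back to $\bPol(B,n-1)$. Compatibility of $\phi$ with differentials is automatic, since the differential on $\Def(B)$ is twisted precisely by $[\pi_B, -]$ and $[\pi_B, [\pi_B, b]] = 0$ as above; this is why $\phi$ is a chain map to begin with. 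Combining the two, $\phi$ is a morphism of dg Lie algebras.
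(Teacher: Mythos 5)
Your argument is correct, and it reaches the same conclusion by a slightly different route. The paper's proof is more economical: it observes that both $\phi(\{b_1,b_2\})=[\pi_B,\{b_1,b_2\}]$ and $[\phi(b_1),\phi(b_2)]$ are weight-one polyvectors, i.e.\ derivations of $B$, hence are equal iff they agree on every $b_3\in B$; evaluating, the required identity $\{\{b_1,b_2\},b_3\}=\{b_1,\{b_2,b_3\}\}-(-1)^{|b_1||b_2|}\{b_2,\{b_1,b_3\}\}$ is literally the Jacobi identity of the Poisson bracket on $B[n-1]$, which is an axiom of the $\bP_n$-structure. You instead stay inside $\bPol(B,n-1)$ and derive the same identity from the graded Jacobi identity of the Schouten bracket together with $[\pi_B,\pi_B]=0$; this is the standard ``derived bracket'' computation, and it is valid because for a strict $\bP_n$-algebra the Maurer--Cartan equation splits as $\d\pi_B=0$ and $[\pi_B,\pi_B]=0$ (the bracket being a chain map), so your vanishing of $[\pi_B,[\pi_B,b]]=\tfrac12[[\pi_B,\pi_B],b]$ is legitimate. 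The trade-off: the paper's proof uses only the Jacobi axiom of $B$ and avoids all sign bookkeeping in the shifted Schouten algebra, while yours also establishes compatibility with the twisted differential on $\Def(B)$ (which the paper does not need to state, since the proposition only concerns the Lie brackets and the chain-map property is built into the fiber sequence). Your acknowledged sign issues do resolve with the conventions of the paper, so there is no gap.
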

\begin{proof}
Indeed, compatibility with the brackets is equivalent to the equation
\[\{\{b_1, b_2\}, b_3\} = \{b_1, \{b_2, b_3\}\} - (-1)^{|b_1||b_2|} \{b_2, \{b_1, b_3\}\}\]
for $b_i\in B[n-1]$ which is exactly the Jacobi identity in the Lie algebra $B[n-1]$.
\end{proof}

Since the morphism $\Def(B)[-n]\rightarrow \rZ(B)$ is a morphism of non-unital $\bP_{n+1}$-algebras, the sequence \eqref{eq:centersequence} can be upgraded to a fiber sequence in $\balg_{\bP_{n+1}^{\nu}}(\cM)$. In particular, $B[-1]$ carries a homotopy non-unital $\bP_{n+1}$-structure such that the underlying Lie structure by Proposition \ref{prop:centersequencelie} coincides with the one on $B$.

\begin{remark}
The fiber sequence of non-unital $\bP_{n+1}$-algebras \eqref{eq:centersequence} is a $\bP_n$-analog of the sequence
\[B[-1]\rightarrow \Def(B)[-n]\rightarrow \mathrm{HH}^\bullet_{\bE_n}(B)\]
of non-unital $\bE_{n+1}$-algebras constructed by Francis in \cite[Theorem 4.25]{Fr} (see also \cite[Section 5.3.2]{Lu}) when $B$ is an $\bE_n$-algebra.
\end{remark}

Next, suppose $(A, B, f)$ is a $\bP_{[n+1, n]}$-algebra in $M$. Then we can construct a commutative diagram of non-unital $\bP_{n+1}$-algebras
\[
\xymatrix{
B[-1]\ar@{=}[d] \ar[r]&\U(A,B) \ar[r] \ar[d] & A \ar[d]\\
B[-1] \ar[r] & \Def(B)[-n] \ar[r] & \rZ(B)
}
\]
where both rows are homotopy fiber sequences and the square on the right is Cartesian. This defines $\U(A, B)$, a non-unital $\bP_{n+1}$-algebra, which as a Lie algebra fits into a fiber sequence
\[B[n-1]\rightarrow \U(A, B)[n]\rightarrow A[n].\]

Proceeding as in \cite[Definition 1.4.15]{CPTVV} it defines a forgetful functor
\[\U\colon \balg_{\bP_{[n+1, n]}}(\cM)\longrightarrow \balg_{\bP_{n+1}^{\nu}}(\cM).\]

Let us now give explicit formulas for the Lie brackets on $\U(A, B)$. From the $\bP_{n+1}$-morphism $A\rightarrow \rZ(B)$ we obtain a morphism of Lie algebras
\[A[n]\rightarrow \rZ(B)[n]\rightarrow \Hom(\Sym(B[n]), B[n]),\]
where the object on the right is equipped with the convolution Lie bracket. Therefore, by results of \cite[Section 3]{La} we obtain an $L_\infty$ structure on $A[n]\oplus B[n-1]$. The brackets are as follows:
\begin{itemize}
\item The bracket on $A[n]$ is the Poisson bracket on $A$,

\item The bracket on $B[n-1]$ is the Poisson bracket on $B$,

\item The mixed brackets $A[n]\otimes (B[n-1])^{\otimes k}\rightarrow B[n-k]$ are given by the components $f_k\colon A\rightarrow \Hom(\Sym^k(B[n]), B)$ of the morphism above.
\end{itemize}

\subsection{Mixed structures on relative Poisson algebras}
\label{sect:mixedpoisson}

Let $A$ be a graded $\bP_{n+1}$-algebra in $\cM$. Moreover, suppose we have a morphism
\[a\colon k(2)[-1]\longrightarrow A[n]\]
of graded Lie algebras. The bracket defines a morphism $A[n]\rightarrow \Der(A, A)\otimes k(1)$ of graded Lie algebras and hence we obtain a morphism
\[k(2)[-1]\longrightarrow \Der(A, A)\otimes k(1)\]
of graded Lie algebras, i.e. the graded commutative algebra $A$ is enhanced to a graded mixed algebra. Let $A_0$ be the weight 0 component of $A$. By the universal property of the de Rham algebra (see \cite[Proposition 1.3.8]{CPTVV}) we obtain a morphism
\[\bDR^{int}(A_0)\longrightarrow A\]
of graded mixed commutative algebras.

More explicitly, given an element $a\in A$ we construct a graded mixed structure on $A$ given by $\epsilon_A = [a, -]$. The morphism $\bDR^{int}(A_0)\rightarrow A$ is given by
\[x \ddr y_1 \ddr y_2 \dots \ddr y_n\mapsto x [a, y_1][a,y_2] \dots [a, y_n].\]

We proceed to a relative version of this construction. Fix a graded $\bP_{[n+1, n]}$-algebra $(A, B, f)$ in $M$ and suppose
\[(a, b)\colon k(2)[-1]\longrightarrow \U(A, B)[n]\]
is a $\infty$-morphism of graded $L_\infty$-algebras. By Proposition \ref{prop:MCgradeddglie} it is given by a collection of elements $a=a_2+\dots$ and $b=b_2+\dots$ satisfying the Maurer--Cartan equation, where the subscript denotes the weight. Twisting the $L_\infty$ brackets on $\U(A, B)[n]$ by $(a, b)$ as in Definition \ref{def:MCtwist}, we obtain mixed structures on $A$ and $B$. Using the explicit description of the $L_\infty$ brackets on $\U(A, B)[n]$ given in Section \ref{sect:relpoistopois}, we see that the mixed structure $\epsilon_A$ on $A$ is given by $\epsilon_A = [a, -]$ while the mixed structure $\epsilon_B$ on $B$ is given by
\[\epsilon_B = [b, -] + \sum_{k=1}^\infty \frac{1}{(k-1)!}f_k(a; b, \dots, b, -).\]

It is clear from these formulas that the mixed structures $\epsilon_A$ and $\epsilon_B$ are derivations of the commutative multiplication, thus both $A$ and $B$ become weak graded mixed commutative algebras. Moreover, the morphism $f_0\colon A\rightarrow B$ extends to an $\infty$-morphism of weak graded mixed commutative algebras whose components are given by $\frac{1}{k!}f_k(-; b, \dots, b)$. The corresponding relation between the mixed structures on $A$ and $B$ simply follows by observing that twisting the differential on an $L_\infty$-algebra produces a mixed structure.

In this way we construct an $\infty$-morphism of $L_\infty$-algebras
\[\U(A, B)[n]\rightarrow \Der(A\rightarrow B, A\rightarrow B)\otimes k(1),\]
where we consider $A\rightarrow B$ as a commutative algebra in $\Arr(M)$. Moreover, this can be enhanced to a natural transformation of functors
\[\balg_{\bP_{[n+1, n]}}^{gr}(\cM)^{\sim}\rightarrow \balg_{\Lie}^{gr}.\]

Considering again $A_0\rightarrow B_0$ as a commutative algebra in $\Arr(\cM)$ we can identify
\[\bDR^{int}(A_0\rightarrow B_0)\cong (\bDR^{int}(A_0)\rightarrow \bDR^{int}(B_0)).\]
Therefore, given a graded $\bP_{[n+1, n]}$-algebra $(A, B, f)$ we obtain a diagram
\[
\xymatrix{
\bDR^{int}(A_0) \ar[d] \ar[r] & \bDR^{int}(B_0) \ar[d] \\
A \ar[r] & B
}
\]
of graded mixed commutative algebras in $\cM$, where $A$ and $B$ are endowed with the graded mixed structure constructed above.

\section{Coisotropic structures on affine derived schemes}

In Sections \ref{sect:operadicres} and \ref{sect:braceconstruction} we have established the necessary facts about convolution and relative convolution algebras. In this section we use those notions to define polyvector and relative polyvector fields. We also define the notion of Poisson and coisotropic structures and compute these spaces explicitly in terms of Maurer--Cartan elements in the Lie algebras of polyvectors and relative polyvectors respectively.

\subsection{Poisson structures and polyvectors}
\label{sect:polyvectors}

Let $A$ be a commutative algebra in $M$ which we view as a graded $\bP_{n+1}$-algebra with the trivial bracket. Recall from Definition \ref{def:center} its center $\bZ(A)$ which is a graded $\Br_{\coP_{n+1}^\theta}$-algebra and hence a graded homotopy $\bP_{n+2}$-algebra.

\begin{defn}
Let $A$ be a commutative algebra in $M$. Its algebra of \emph{$n$-shifted polyvectors} $\bPol(A, n)$ is the graded homotopy $\bP_{n+2}$-algebra $\bZ(A)$.
\end{defn}

By considering the internal Hom in $M$, we can upgrade $\bPol(A, n)$ to a graded homotopy $\bP_{n+2}$-algebra in $M$ that we denote by $\bPol^{int}(A, n)$ and call the algebra of \emph{internal $n$-shifted polyvectors}.

Let $\CAlg^{fet}(M)\subset \CAlg(M)$ be the wide subcategory of commutative algebras in $M$ where we only consider morphisms which are formally \'{e}tale, i.e. morphisms $A\rightarrow B$ such that the pullback morphism $\Omega^1_A\otimes_A B\rightarrow \Omega^1_B$ is a quasi-isomorphism. Denote by $\bCAlg^{fet}(\cM)$ its localization. Then as in \cite[Definition 1.4.15]{CPTVV} one can upgrade $\bPol^{int}(-, n)$ to a functor of $\infty$-categories
\[\bPol^{int}(-, n)\colon \bCAlg^{fet}(\cM)\longrightarrow \balg_{\bP_{n+2}}(\cM^{gr}).\]

The complex
\[\Pol(A, n) = \Hom_A(\Sym_A(\Omega^1_A[n+1]), A)\]
carries a natural graded $\bP_{n+2}$-algebra structure where $\Omega^1_A$ has weight $-1$ and where the Lie bracket is given by the Schouten bracket (see \cite[Section 1.1]{Sa1} for explicit formulas). The following statement explains the term ``polyvectors'':
\begin{prop}
Let $A$ be a bifibrant commutative algebra in $M$. Then one has an equivalence of graded $\bP_{n+2}$-algebras
\[\bPol(A, n)\cong \Pol(A, n).\]
\label{prop:strictpolyvectors}
\end{prop}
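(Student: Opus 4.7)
The plan is to construct a natural comparison morphism $\Pol(A, n)\to \bPol(A, n)$ of graded $\bP_{n+2}$-algebras in $\cM$ and argue that it is a weak equivalence. Since the $\bP_{n+1}$-bracket on $A$ is trivial, the twisting Maurer--Cartan element defining the differential on $\bZ(A) = \Conv^0(\coP_{n+1}^{\theta,\cu}\{n+1\}; A)[-n-1]$ is supported on the Hopf sub-cooperad $\coComm^{\cu}\subset \coP_{n+1}^{\theta,\cu}$ and records the strict commutative multiplication and unit of $A$.

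The key structural input is the Poincar\'{e}--Birkhoff--Witt decomposition of the Hopf cooperad $\coP_{n+1}^{\cu}$, dual to the decomposition $\bP_{n+1}\cong \Comm\circ \Lie\{n\}$ of the free $\bP_{n+1}$-algebra as a symmetric algebra on the free shifted Lie algebra. After accounting for the operadic shift $\{n+1\}$, this rewrites the weight-$p$ component of $\bZ(A)$ as the strict $p$-fold symmetric power over $A$ of the Harrison cochain complex $\Harr^\bullet(A,A)[n]$, with differential being precisely the one induced by the twist by the commutative multiplication. The weight-$1$ case of the comparison then reduces to Proposition \ref{prop:harrisontangent}, which for bifibrant $A$ identifies $\Der^{int}(A,A)$ with $|\Harr^\bullet(A,A)|$ as Lie algebras in $M$.

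To extend to arbitrary weight $p$, I would combine Proposition \ref{prop:harrisoncotangent} (the Harrison chain complex resolves $\Omega^1_A$) with Lemma \ref{lm:kahlerrealization} (compatibility of left realization with K\"{a}hler differentials) and the fact that, $A$ being bifibrant, $\Omega^1_A$ is cofibrant as an $A$-module. These together identify $\Sym_A^p(\Omega^1_A[n+1])$ with the symmetric power of the Harrison chain complex, and dualizing, combined with the PBW rewriting of the previous paragraph, produces the weight-$p$ equivalence between $\Pol(A, n)$ and $\bPol(A, n)$. Compatibility with the commutative multiplication is automatic because both sides are free symmetric algebras over $A$ on their weight-$1$ piece; compatibility with the Lie bracket follows from the agreement of the Schouten bracket and the convolution pre-Lie bracket in weight $1$ under the Harrison identification, together with the Leibniz rule which extends both brackets to higher weights.

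The main obstacle is the precise identification of the twisted differential on $\bZ(A)$ with the Harrison--Koszul differential on the $p$-fold symmetric power; this requires careful bookkeeping of the PBW decomposition, the operadic shift $\{n+1\}$, the curving $\theta$ on $\coP_{n+1}^{\theta,\cu}$ corresponding to the unit of $A$, and the Koszul signs arising from reordering the $\coLie$-factors.
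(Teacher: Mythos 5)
Your proposal follows essentially the same route as the paper: the PBW identification of $\bZ(A)$ with $\Hom_A(\Sym_A(\Harr_\bullet(A,A)[n+1]), A)$, a comparison map $\Pol(A,n)\to\bPol(A,n)$ induced by $\Harr_\bullet(A,A)\to\Omega^1_A$, and then Proposition \ref{prop:harrisoncotangent} together with cofibrancy and fibrancy of $A$ to see that applying $\Sym_A$ and then $\Hom_A(-,A)$ preserves the weak equivalence. The one point you treat more loosely than the paper is the compatibility of this map with the full homotopy $\bP_{n+2}$-structure on $\bZ(A)$ (all brace operations, not just the binary product and bracket via the Leibniz rule), which the paper settles by citing \cite[Theorem 1]{CW}.
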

\begin{proof}
By definition we have
\[\bPol(A, n)\cong \Hom_A(\Sym_A(\Harr_\bullet(A, A)[n+1]) A).\]

The morphism $\Harr_\bullet(A, A)\rightarrow \Omega^1_A$ induces a morphism
\[\Pol(A, n)\longrightarrow \bPol(A, n)\]
which is strictly compatible with the homotopy $\bP_{n+2}$-structures by \cite[Theorem 1]{CW}.

Since $A$ is a cofibrant commutative algebra, it is also cofibrant in $M$, so by Proposition \ref{prop:harrisoncotangent} the morphism $\Harr_\bullet(A, A)\rightarrow \Omega^1_A$ is a weak equivalence between cofibrant $A$-modules and hence so is
\[\Sym_A(\Harr_\bullet(A, A)[n+1])\longrightarrow \Sym_A(\Omega^1_A[n+1]).\]

Since $A$ is fibrant, the functor $\Hom_A(-, A)$ preserves weak equivalences.
\end{proof}

\begin{cor}
For any commutative algebra $A$ in $\cM$ we have an equivalence of graded objects
\[\bPol(A, n)\cong \Hom_A(\Sym_A(\bL_A[n+1]), A).\]

In particular, if $\bL_A$ is perfect, we get an equivalence
\[\bPol(A, n)\cong \Sym_A(\bT_A[-n-1]).\]
\end{cor}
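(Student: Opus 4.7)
The plan is to bootstrap from Proposition~\ref{prop:strictpolyvectors} by passing to a bifibrant replacement. Given a commutative algebra $A$ in $\cM$, choose a cofibrant-fibrant model $\widetilde{A}\in \CAlg(M)$ representing $A$. The construction $A'\mapsto \bPol(A', n) = \bZ(A')$ is defined in terms of a convolution algebra associated to the cooperad $\coP_{n+1}^\theta$ applied to $A'$, together with its canonical Maurer--Cartan element. Combining Proposition~\ref{prop:cobarbarwe}, Lemma~\ref{lm:cobarbarcofibrant}, and the realization preservation results of Propositions~\ref{prop:rightrealizationweq}--\ref{prop:leftrealizationweq}, this construction is invariant under quasi-isomorphisms of cofibrant commutative algebras, so $\bPol(A, n)\simeq \bPol(\widetilde{A}, n)$ in $\cM^{gr}$.

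Applying Proposition~\ref{prop:strictpolyvectors} to $\widetilde{A}$ yields
\[
\bPol(\widetilde{A}, n)\simeq \Hom_{\widetilde{A}}(\Sym_{\widetilde{A}}(\Omega^1_{\widetilde{A}}[n+1]), \widetilde{A}).
\]
Because $\widetilde{A}$ is cofibrant as a commutative algebra, $\Omega^1_{\widetilde{A}}$ is a strict model for the cotangent complex $\bL_A$, and the right-hand side therefore computes $\Hom_A(\Sym_A(\bL_A[n+1]), A)$ in $\cM$. This establishes the first equivalence.

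For the second assertion, assume $\bL_A$ is perfect with dual $\bT_A = \bL_A^\vee$. Then each weight component $\Sym_A^k(\bL_A[n+1])$ is a perfect $A$-module, and since we work over a characteristic zero field the norm map identifies $S_k$-invariants with $S_k$-coinvariants of tensor powers, so dualization commutes with symmetric powers weight-by-weight:
\[
\Hom_A(\Sym_A^k(\bL_A[n+1]), A)\simeq \Sym_A^k(\bL_A[n+1])^\vee \simeq \Sym_A^k(\bT_A[-n-1]).
\]
Taking the product over $k\geq 0$ gives the claimed equivalence $\bPol(A,n)\simeq \Sym_A(\bT_A[-n-1])$.

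The only nontrivial step is verifying that the homotopy center $\bZ(-)$ is quasi-isomorphism invariant on cofibrant commutative algebras, so that it descends to a well-defined functor on $\bCAlg(\cM)$ at the level of underlying graded objects. This is essentially already contained in the proof of Proposition~\ref{prop:strictpolyvectors} through the cobar-bar resolution, and since we only need an equivalence of underlying graded objects (not of graded $\bP_{n+2}$-algebras), no additional verification of the brace structure is required.
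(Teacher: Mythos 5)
Your proposal is correct and follows essentially the same route the paper intends for this corollary: pass to a bifibrant model, invoke Proposition \ref{prop:strictpolyvectors}, identify $\Omega^1$ of a cofibrant model with $\bL_A$, and use the characteristic-zero norm argument to commute duals past $\Sym$ when $\bL_A$ is perfect. The only slight imprecision is that the quasi-isomorphism invariance of $\bZ(-)$ is really supplied by the functoriality of $\bPol^{int}(-,n)$ on $\bCAlg^{fet}(\cM)$ (following \cite[Definition 1.4.15]{CPTVV}, weak equivalences being formally \'etale) rather than by the realization propositions you cite, but this does not affect the argument.
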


\begin{defn}
Let $A$ be a commutative algebra in $\cM$, The \emph{space of $n$-shifted Poisson structures} $\Pois(A, n)$ is given by the fiber of the forgetful functor
\[\balg_{\bP_{n+1}}(\cM)^{\sim}\rightarrow \bCAlg(\cM)^{\sim}\]
at $A\in\bCAlg(\cM)$.
\end{defn}

One has an explicit way to compute the space of shifted Poisson structures in terms of the algebra of polyvectors. The following theorem is a version of \cite[Theorem 1.4.9]{CPTVV} and \cite[Theorem 3.2]{Me}:
\begin{thm}
Let $A$ be a commutative algebra in $\cM$. One has an equivalence of spaces
\[\Pois(A, n)\cong \Map_{\balg_{\Lie}^{gr}}(k(2)[-1], \bPol(A, n)[n+1]).\]
\label{thm:poissonpolyvectors}
\end{thm}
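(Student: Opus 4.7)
The plan is to present both sides as Maurer--Cartan spaces of filtered dg Lie algebras and to exhibit a quasi-isomorphism between them. On the right-hand side, Proposition \ref{prop:MCgradeddglie} gives
\[
\Map_{\alg_{\Lie}^{gr}}(k(2)[-1], \bPol(A, n)[n+1]) \simeq \underline{\MC}\bigl(\bPol(A, n)[n+1]^{\geq 2}\bigr),
\]
and after replacing $A$ by a bifibrant cdga Proposition \ref{prop:strictpolyvectors} identifies $\bPol(A,n)$ with the strict polyvector algebra $\Pol(A,n) = \Hom_A(\Sym_A(\Omega^1_A[n+1]), A)$ as a graded $\bP_{n+2}$-algebra, so the right-hand side becomes $\underline{\MC}(\Pol(A,n)[n+1]^{\geq 2})$.

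For the left-hand side, I would realize $\Pois(A, n)$ as the homotopy fiber of
\[
\Map_{\Op_k}(\bP_{n+1}, \End_A) \longrightarrow \Map_{\Op_k}(\Comm, \End_A)
\]
over the point corresponding to the given commutative structure $\mu$ on $A$. Choosing Koszul cofibrant resolutions $\Omega(\coP_{n+1}^\theta\{n+1\}) \stackrel{\sim}{\rightarrow} \bP_{n+1}$ and $\Omega(\coLie^\theta\{1\}) \stackrel{\sim}{\rightarrow} \Comm$ compatibly with the split inclusion $\Comm \hookrightarrow \bP_{n+1}$, Proposition \ref{prop:convolutionla} rewrites this as a fiber of a morphism of Maurer--Cartan spaces of convolution dg Lie algebras $\Conv(\coP_{n+1}^\theta\{n+1\}, \End_A) \to \Conv(\coLie^\theta\{1\}, \End_A)$. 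The retraction $\bP_{n+1} \twoheadrightarrow \Comm$ provides a section of this morphism, so Lemma \ref{lm:MCfiber} identifies the fiber with $\underline{\MC}(\mathfrak{k}_\mu)$, where $\mathfrak{k}_\mu$ is the kernel dg Lie algebra with its differential twisted by a lift of $\mu$.

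The main step, and the principal technical obstacle, is to construct an equivalence of pronilpotent graded dg Lie algebras $\mathfrak{k}_\mu \simeq \Pol(A, n)[n+1]^{\geq 2}$. As graded objects, the Koszul distributive law $\bP_{n+1} \cong \Comm \circ \Lie\{n\}$ allows one to decompose $\coP_{n+1}^\theta\{n+1\}$ so that $\mathfrak{k}_\mu$ becomes a weight completion of $\Hom_A\bigl(\Sym_A^{\geq 2}(\coLie^\theta\{1\}(A)[n+1]), A\bigr)$; applying the equivalence $\coLie^\theta\{1\}(A) \to \Omega^1_A$ of Proposition \ref{prop:harrisoncotangent} then transports this to $\Pol(A, n)[n+1]^{\geq 2}$. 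The $\mu$-twist of the convolution differential yields exactly the expected Koszul/polyvector differential, while the identification of the pre-Lie convolution structure with the Schouten bracket follows from the Calaque--Willwacher morphism \eqref{eq:CWmorphism} --- the same tool used to equip $\bPol(A,n)$ with its $\bP_{n+2}$-structure. The bookkeeping of signs, shifts and weights in this identification is where the bulk of the work lies; the resulting argument mirrors and promotes the strategies of \cite[Theorem 1.4.9]{CPTVV} and \cite[Theorem 3.2]{Me} to the present operadic framework.
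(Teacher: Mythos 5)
Your proposal follows essentially the same route as the paper: realize $\Pois(A,n)$ as the homotopy fiber of $\Map_{\Op_k}(\bP_{n+1},\End_A)\rightarrow\Map_{\Op_k}(\Comm,\End_A)$, convert both sides to Maurer--Cartan spaces of convolution Lie algebras via Proposition \ref{prop:convolutionla}, identify the fiber by Lemma \ref{lm:MCfiber}, and conclude with Proposition \ref{prop:MCgradeddglie}. The one place you overcomplicate matters is the step you flag as the ``principal technical obstacle'': the theorem is stated for $\bPol(A,n)$, which is \emph{by definition} the (twisted) convolution algebra $\Conv^0(\coP_{n+1}^{\theta,\cu}\{n+1\};A)\cong\Hom_A(\Sym_A(\Harr_\bullet(A,A)[n+1]),A)$, so the twisted kernel produced by Lemma \ref{lm:MCfiber} is identified with $\bPol^{\geq 2}(A,n)[n+1]$ essentially by inspection, with no need to pass through the strict polyvectors $\Pol(A,n)$ or the Schouten-bracket comparison; that comparison is exactly Proposition \ref{prop:strictpolyvectors} and is kept separate from the theorem in the paper.
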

\begin{proof}
Assume $A$ is a bifibrant commutative algebra in $M$. By \cite{Rez} (see also \cite{Ya2}) we can identify the space $\Pois(A, n)$ with the homotopy fiber of
\[\Map_{\Op_k}(\bP_{n+1}, \End_A)\rightarrow \Map_{\Op_k}(\Comm, \End_A)\]
at the given commutative structure on $A$.

We have a commutative diagram of spaces
\[
\xymatrix{
\Map_{\Op_k}(\bP_{n+1}, \End_A) \ar[r] \ar^{\sim}[d] & \Map_{\Op_k}(\Comm, \End_A) \ar^{\sim}[d] \\
\underline{\MC}(\Conv(\coP_{n+1}^{\theta}\{n+1\}; A)) \ar[r] & \underline{\MC}(\Conv(\coLie^\theta\{1\}; A)
}
\]
with the vertical weak equivalences given by Proposition \ref{prop:convolutionla}.

By Lemma \ref{lm:MCfiber} we can identify the fiber of the bottom map with the space of Maurer--Cartan elements in the dg Lie algebra
\[\Hom_k(\Sym^{\geq 2}(\coLie^{\theta}(A[1])[n]), A)\cong \Hom_A(\Sym^{\geq 2}_A(\Harr_\bullet(A, A)[n+1]), A)\]
which can be identified with the completion of $\bPol(A, n)$ in weights $2$ and above. The claim therefore follows from Proposition \ref{prop:MCgradeddglie}.
\end{proof}

\begin{remark}
The main difference with the formulation we give and the one in \cite{Me} and \cite{CPTVV} is that we consider the non-strict version of polyvectors. To obtain the same result, one should couple Theorem \ref{thm:poissonpolyvectors} with Proposition \ref{prop:strictpolyvectors}.
\end{remark}

If $A$ is a $\bP_{n+1}$-algebra, we define the opposite algebra to have the same multiplication and the bracket $\{a, b\}^{\opp} = -\{a, b\}$. This defines a morphism of spaces
\[\opp\colon \Pois(A, n)\longrightarrow \Pois(A, n).\]
It is easy to see that under the equivalence given by Theorem \ref{thm:poissonpolyvectors} it corresponds to the involution on $\bPol(A, n)$ given by multiplication by $(-1)^{k+1}$ in weight $k$.

\subsection{Relative polyvectors}
\label{sect:relativepoly}

Suppose $f\colon A\rightarrow B$ is a morphism of commutative algebras in $M$. We regard $A$ as a graded $\bP_{n+1}$-algebra with the trivial bracket and $B$ as a graded $\bP_n$-algebra with the trivial bracket. In particular, the composite
\[A\stackrel{f}\longrightarrow B\longrightarrow \bPol^{int}(B, n-1)\]
is strictly compatible with the $\bP_{n+1}$-structures and we denote it by $\tilde{f}$.

Recall from Section \ref{sect:relativebrace} the complex
\[\Conv^0(\coP_{n+1}^{\theta,\cu}\{n+1\}; A, \bPol^{int}(B, n-1)).\]
The morphism $\tilde{f}$ defines a Maurer--Cartan element in the $L_\infty$ algebra \[\cL^0(\coP_{n+1}^{\theta,\cu}\{n+1\}; A, \bPol^{int}(B, n-1))\] and hence we can consider the twisted convolution algebra
\[\Conv^0_{\tilde{f}}(\coP_{n+1}^{\theta,\cu}\{n+1\}; A, \bPol^{int}(B, n-1)).\]

As shown in the same section, it carries a natural structure of a graded homotopy $\bP_{n+1}$-algebra which we denote by $\bPol(B/A, n-1)$. Similarly, we can define its internal version $\bPol^{int}(B/A, n-1)$ which is a graded homotopy $\bP_{n+1}$-algebra in $M$.

\begin{prop}
Let $f\colon A\rightarrow B$ be a cofibrant diagram of commutative algebras in $M$, where $B$ is also fibrant. Then one has an equivalence of graded objects
\[\bPol^{int}(B/A, n-1)\cong \Hom_B(\Sym_B(\Omega^1_{B/A}[n]), B).\]
\label{prop:strictrelpolyvectors1}
\end{prop}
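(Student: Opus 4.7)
My plan is to proceed in direct analogy with the proof of Proposition \ref{prop:strictpolyvectors}, now in a relative setting. Since the diagram $A\to B$ is cofibrant, $B$ is in particular cofibrant as a commutative algebra; combined with fibrancy of $B$, Proposition \ref{prop:strictpolyvectors} gives
\[\bPol^{int}(B, n-1)\cong \iHom_B(\Sym_B(\Omega^1_B[n]), B).\]
Substituting this into the definition of the twisted convolution algebra and applying the tensor-hom adjunction rewrites $\bPol^{int}(B/A, n-1)$ as an internal Hom out of $B$ from the tensor product $\coP_{n+1}^{\theta,\cu}\{n+1\}(A)\otimes \Sym_B(\Omega^1_B[n])$, with differential twisted by $\tilde f$.

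The next step is to identify this $B$-module, up to weak equivalence of cofibrant $B$-modules, with $\Sym_B(\Omega^1_{B/A}[n])$. For this I would introduce a relative Harrison complex
\[\Harr_\bullet(B/A, B) := \mathrm{cofib}\bigl(B \otimes_A \Harr_\bullet(A, A) \longrightarrow \Harr_\bullet(B, B)\bigr)\]
of graded mixed $B$-modules and verify, via Proposition \ref{prop:harrisoncotangent} applied to $A$ and $B$ together with the cofiber sequence $\Omega^1_A\otimes_A B\to \Omega^1_B\to \Omega^1_{B/A}$, that $|\Harr_\bullet(B/A, B)|^l$ is weakly equivalent to $\Omega^1_{B/A}$ as a $B$-module. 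By Koszul duality for $\bP_{n+1}$, packaged as a symmetric algebra on the relative Harrison cone, the tensor product $\coP_{n+1}^{\theta,\cu}\{n+1\}(A)\otimes \Sym_B(\Omega^1_B[n])$ with its $\tilde f$-twisted differential is weakly equivalent to $\Sym_B$ applied to this cone, hence to $\Sym_B(\Omega^1_{B/A}[n])$. A filtered-colimit argument in the spirit of Lemma \ref{lm:cobarbarcofibrant} ensures that both sides are cofibrant as $B$-modules, so fibrancy of $B$ lets $\iHom_B(-, B)$ preserve this weak equivalence and produces the claimed identification of graded objects.

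The main technical obstacle lies in the identification step: one must check that the $\tilde f$-twist on $\coP_{n+1}^{\theta,\cu}\{n+1\}(A)\otimes \Sym_B(\Omega^1_B[n])$ assembles with the internal cobar-type differential of $\coP_{n+1}^{\theta,\cu}$ precisely into the mapping-cone differential of $\Sym_B(\Harr_\bullet(B/A, B)[n])$. This is essentially a Koszul-duality computation relating the curved cobar differential of $\coP_{n+1}^{\theta,\cu}$ to that of the relative cobar-bar resolution; the morphism $f\colon A\to B$ enters because $\tilde f$ factors as $A\to B\to \bPol^{int}(B, n-1)$, so its bracket with an element of $\Sym_B(\Omega^1_B[n])$ picks out the image of $\Omega^1_A\otimes_A B$ inside $\Omega^1_B$, exactly the cofiber direction required to recover $\Omega^1_{B/A}$.
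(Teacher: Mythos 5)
Your proposal follows essentially the same route as the paper: replace $\bPol^{int}(B,n-1)$ by its strict model via Proposition \ref{prop:strictpolyvectors}, use adjunction to recast $\bPol^{int}(B/A,n-1)$ as $\iHom_B$ out of a graded mixed $B$-module built from the Harrison complex of $A$ base-changed to $B$ mapping onto the K\"ahler differentials, identify its left realization with $\Sym_B(\Omega^1_{B/A}[n])$ using Proposition \ref{prop:harrisoncotangent}, the strict exactness of $\Omega^1_A\otimes_A B\to\Omega^1_B\to\Omega^1_{B/A}$ (from cofibrancy of the diagram) and Proposition \ref{prop:realizationfiber}, and finally use bifibrancy to pass through $\iHom_B(-,B)$. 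The only cosmetic difference is that the paper's mapping cone targets the strict $\Omega^1_B$ (in weight $0$, with the connecting term $A\otimes B\to\Omega^1_A\otimes_A B\to\Omega^1_B$ as part of the mixed structure) rather than the full $\Harr_\bullet(B,B)$, but the "main technical obstacle" you identify is exactly the verification the paper carries out.
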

\begin{proof}
Since $B$ is bifibrant as a commutative algebra, it is enough to prove that the projection
\[\Sym(\coLie(A[1])[n])\otimes \Sym_B(\Omega^1_B[n])\rightarrow \Sym(\Omega^1_{B/A}[n])\]
induced by the morphism $\Omega^1_B\rightarrow \Omega^1_{B/A}$ induces a weak equivalence after passing to left realizations. This will follow once we prove that
\[\Harr_\bullet(A, A)\otimes_A B \oplus \Omega^1_B\rightarrow \Omega^1_{B/A}\]
induces a weak equivalence of $B$-modules after passing to left realizations. Here the grading is inherited from the Harrison complex on the left-hand side and given by putting the K\"{a}hler differentials in weight $0$. The mixed structure on the right-hand side is trivial. The mixed structure on the left-hand side is a sum of two terms:
\begin{enumerate}
\item The first term is the usual Harrison differential.
\item The second term is given by the composite
\[A\otimes B\stackrel{\ddr\otimes\id}\rightarrow \Omega^1_A\otimes_A B\rightarrow \Omega^1_B,\]
where $A\otimes B$ is the weight $-1$ part of $\Harr_\bullet(A, A)\otimes_A B$.
\end{enumerate}

Since $A$ is cofibrant, by Proposition \ref{prop:harrisoncotangent} it is enough to prove that
\[\Omega^1_A\otimes_A B[1](-1)\oplus \Omega^1_B\rightarrow \Omega^1_{B/A},\]
where the left-hand side is equipped with the mixed structure given by the pullback of differential forms $\Omega^1_A\otimes_A B\rightarrow \Omega^1_B$ induces a weak equivalence after passing to left realizations. But since $A\rightarrow B$ is a cofibrant diagram, the natural sequence of $B$-modules
\[\Omega^1_A\otimes_A B\rightarrow \Omega^1_B\rightarrow \Omega^1_{B/A}\]
is exact and by Proposition \ref{prop:realizationfiber} this finishes the proof.
\end{proof}

Using the morphism $f\colon A\rightarrow B$ one can regard $B$ as a commutative algebra in $\mod_A(M)$. In particular, we can compute internal strict polyvectors of $B$ in $\mod_A(M)$ which we denote by $\Pol^{int}_A(B, n-1)$. It is easy to see that the composite
\[\Pol^{int}_A(B, n-1)\rightarrow \bPol^{int}(B, n-1)\rightarrow \bPol^{int}(B/A, n-1)\]
is strictly compatible with the homotopy $\bP_{n+1}$-structures.

\begin{remark}
Note that the map $\bPol^{int}(B, n-1)\rightarrow \bPol^{int}(B/A, n-1)$ is \emph{not} compatible with differentials and is not model-independent. However, it is easy to see that the composite map $\Pol^{int}_A(B, n-1)\rightarrow \bPol^{int}(B/A, n-1)$ is compatible with the differentials.
\end{remark}

\begin{cor}
Let $A\rightarrow B$ be a cofibrant diagram of commutative algebras in $M$, where $B$ is also fibrant. Then the morphism
\[\Pol^{int}_A(B, n-1)\longrightarrow \bPol^{int}(B/A, n-1)\]
is a weak equivalence of homotopy $\bP_{n+1}$-algebras in $M$.
\end{cor}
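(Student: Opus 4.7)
The plan is to deduce this directly from Proposition \ref{prop:strictrelpolyvectors1}. The remark preceding the corollary already notes that the composite $\Pol^{int}_A(B, n-1) \to \bPol^{int}(B, n-1) \to \bPol^{int}(B/A, n-1)$ is strictly compatible with the homotopy $\bP_{n+1}$-structures, so it suffices to verify that this map is a quasi-isomorphism of underlying complexes.

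By construction, $\Pol^{int}_A(B, n-1)$ is the internal $\Hom$ object $\Hom_B(\Sym_B(\Omega^1_{B/A}[n]), B)$, and Proposition \ref{prop:strictrelpolyvectors1} identifies precisely this object with the underlying graded object of $\bPol^{int}(B/A, n-1)$. The next step is to trace through the construction of $\bPol^{int}(B/A, n-1)$ as a twisted relative convolution algebra and to check that our map realizes the equivalence of Proposition \ref{prop:strictrelpolyvectors1}: both are induced on underlying graded objects by the projection $\Omega^1_B \to \Omega^1_{B/A}$, combined with the counit $\coLie^\theta\{1\}(A) \to 0$ in positive arity that kills the Harrison-complex contribution.

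The main obstacle is matching up the differentials under this identification. On $\Pol^{int}_A(B, n-1)$ the differential is simply the internal one coming from $B$, since the Poisson bracket on $B$ is trivial in this setup; on $\bPol^{int}(B/A, n-1)$ the differential is the one on the relative convolution algebra twisted by the Maurer--Cartan element $\tilde f$ attached to $A \to B \to \bPol^{int}(B, n-1)$. The strict compatibility with the homotopy $\bP_{n+1}$-structures ensures that the map is a chain map, and combined with the graded equivalence of Proposition \ref{prop:strictrelpolyvectors1} this forces both differentials to coincide under the identification. Hence the map is a weak equivalence of underlying complexes, and together with the strict compatibility with brackets and multiplication this yields the desired weak equivalence of homotopy $\bP_{n+1}$-algebras.
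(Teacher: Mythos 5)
Your proposal is correct and follows essentially the same route as the paper: since the remark preceding the corollary gives strict compatibility with the homotopy $\bP_{n+1}$-structures, it suffices to check a weak equivalence of underlying graded objects, which is exactly the content of Proposition \ref{prop:strictrelpolyvectors1} (together with Proposition \ref{prop:strictpolyvectors} to identify $\bPol(B,n-1)$ with $\Pol(B,n-1)$), the equivalence there being realized by the very projection induced by $\Omega^1_B\to\Omega^1_{B/A}$ that underlies your map. The only cosmetic point is that your closing argument that the graded identification ``forces the differentials to coincide'' is logically backwards --- an abstract equivalence of graded objects would not make an arbitrary chain map a quasi-isomorphism --- but this is harmless here because, as you note earlier, the map in question \emph{is} the map shown to be a weak equivalence of graded objects (hence of complexes) in Proposition \ref{prop:strictrelpolyvectors1}.
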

\begin{proof}
It is enough to prove that the morphism is a weak equivalence of graded objects in $M$ which follows from Propositions \ref{prop:strictrelpolyvectors1} and \ref{prop:strictpolyvectors}.
\end{proof}

By Proposition \ref{prop:bracemorphisms} we have an $\infty$-morphism of graded homotopy $\bP_{n+1}$-algebras
\[\bPol^{int}(A, n)\rightarrow \bZ(\bPol^{int}(B/A, n-1))\]
and hence $(\bPol^{int}(A, n), \bPol^{int}(B/A, n-1))$ forms a graded homotopy $\bP_{[n+2, n+1]}$-algebra in $M$. Similarly, $(\bPol(A, n), \bPol(B/A, n-1))$ is a graded homotopy $\bP_{[n+2, n+1]}$-algebra in complexes.

\begin{defn}
Let $f\colon A\rightarrow B$ be a morphism of commutative algebras in $M$. The \emph{algebra of relative $n$-shifted polyvector fields} is the graded homotopy non-unital $\bP_{n+2}$-algebra
\[\bPol(f, n) = \U(\bPol(A, n), \bPol(B/A, n-1)).\]
\end{defn}
As before, we can upgrade it to a functor
\[\bPol(-, n)\colon \Arr(\bCAlg^{fet}(\cM))\rightarrow \balg_{\bP_{[n+2, n+1]}}(\cM^{gr}).\]

Note that as a graded vector space we can identify
\[\bPol(f, n)[n+1]\cong \bPol(A, n)[n+1]\oplus \bPol(B/A, n-1)[n].\]
In particular, we get a morphism of graded vector spaces
\[\bPol(f, n)[n+1]\rightarrow \bPol(B/A, n-1)[n]\rightarrow \bPol(B, n-1)[n].\]

Restricting to weight $0$ we get the morphism $A[n+1]\oplus B[n]\rightarrow B[n]$ which is \emph{not} compatible with the differentials. Nevertheless, in highest weight this phenomenon does not occur.

\begin{prop}
The morphism $\bPol^{\geq 1}(f, n)[n+1]\rightarrow \bPol^{\geq 1}(B, n-1)[n]$ is a morphism of filtered $L_\infty$ algebras.
\end{prop}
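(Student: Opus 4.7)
The plan is to construct an explicit filtered $L_\infty$ morphism $\phi\colon \bPol^{\geq 1}(f, n)[n+1]\to \bPol^{\geq 1}(B, n-1)[n]$ and verify its axioms. First, I would work in bifibrant models where, by Propositions~\ref{prop:strictrelpolyvectors1} and~\ref{prop:strictpolyvectors}, $\bPol(B/A, n-1)\cong \Pol_A(B, n-1)$ and $\bPol(B, n-1)\cong \Pol(B, n-1)$. Then the natural map $i\colon \bPol(B/A, n-1)\to \bPol(B, n-1)$ is identified with the tautological inclusion of $A$-linear polyvectors, which is a strict morphism of graded DG Lie algebras (the Schouten bracket of two $A$-linear polyvectors is again $A$-linear), and $\bPol(B, n-1)[n]$ is strictly Lie.

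Next I would unpack the $L_\infty$ brackets on $\bPol(f, n)[n+1]\cong \bPol(A, n)[n+1]\oplus \bPol(B/A, n-1)[n]$ via Section~\ref{sect:relpoistopois}: they comprise the Schouten brackets on each summand together with mixed higher brackets of arity $k+1$ given by the components $f_k$ of the $\infty$-morphism $\bPol(A, n)\to \bZ(\bPol(B/A, n-1))$ furnished by Proposition~\ref{prop:bracemorphisms}. With this in hand, I would define $\phi = (\phi_1, \phi_2, \dots)$ by $\phi_1 = i\circ\pi$ (where $\pi$ is the graded projection onto $\bPol(B/A, n-1)[n]$) and $\phi_{k+1}(a; b_1, \dots, b_k) := i\bigl(f_k(a)(b_1, \dots, b_k)\bigr)$, extended symmetrically. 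Each $\phi_j$ preserves the weight filtration by construction, so $\phi$ is automatically filtered.

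Because the target is strictly Lie and $i$ is strictly Lie, the $L_\infty$ morphism axioms for $\phi$ unpack to the Maurer--Cartan equation for $(f_k)_{k\geq 0}$ in the convolution $L_\infty$-algebra of Proposition~\ref{prop:SCcylinderla}, which is precisely the structural identity satisfied by the $\infty$-morphism $\bPol(A, n)\to \bZ(\bPol(B/A, n-1))$. The main obstacle, and the reason for restricting to $\bPol^{\geq 1}$, is the weight-$0$ anomaly flagged just before the statement: in weight $0$, $f_0$ equals the underlying algebra map $f\colon A\to B$, contributing a differential term $A[n+1]\to B[n]$ that $\phi_1$ cannot match in the target $\bPol(B, n-1)[n]$. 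Passing to weights $\geq 1$ removes exactly this piece while leaving the higher-weight identities intact, yielding the desired morphism of filtered $L_\infty$ algebras.
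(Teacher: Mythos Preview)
Your approach diverges from the paper's and carries a genuine gap. The paper does \emph{not} build an $L_\infty$ morphism with higher components; it argues that the given map is already a \emph{strict} morphism of $L_\infty$ algebras. The point is to stay at the convolution-algebra level rather than pass to the strict models $\Pol_A(B,n-1)\hookrightarrow\Pol(B,n-1)$. There the projection $\bPol(B/A,n-1)\to\bPol(B,n-1)$ is the ``restrict to zero $A$-inputs'' map on $\Conv^0(\coP_{n+1}^{\theta,\cu}\{n+1\};A,\bPol(B,n-1))$, and the mixed brackets are precisely the precomposition action of $\Conv^0(\coP_{n+1}^{\theta,\cu}\{n+1\};A)$ given by the third tree in Figure~\ref{fig:bracemorphism}. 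An element of $\bPol^{\geq 1}(A,n)$ has at least one $A$-input, its action preserves that, and the projection annihilates anything with an $A$-input. Hence all mixed brackets (including the cross-differential) vanish after projection, and the bare projection $\phi_1$ is already compatible with all $L_\infty$ operations.

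By contrast, your passage to strict polyvectors replaces the projection by the inclusion $\Pol_A(B,n-1)\hookrightarrow\Pol(B,n-1)$, which does \emph{not} kill these terms; so even compatibility with the differential fails for $\phi_1$ at the strict level in weights $\geq 1$, not only in weight $0$. You then try to repair this with higher components $\phi_{k+1}=i\circ f_k$, but the assertion that ``the $L_\infty$ morphism axioms unpack to the Maurer--Cartan equation for $(f_k)$'' is not correct as stated: the MC equation for $(f_k)$ encodes the Jacobi identity on the source $\U$, whereas the $L_\infty$-morphism relations for $\phi$ are a different system involving the target bracket and the map $i$; you have not shown these coincide. Finally, your diagnosis of the weight restriction only names the weight-$0$ anomaly; it does not supply the positive reason why weights $\geq 1$ are unobstructed, which is exactly the $A$-input counting argument above.
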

\begin{proof}
Recall from Section \ref{sect:relpoistopois} that the $L_\infty$ brackets on $\bPol(A, n)[n+1]\oplus \bPol(B/A, n-1)[n]$ are given by the original brackets on each summand and the mixed $L_\infty$ brackets between $\bPol(A, n)$ and $\bPol(B/A, n-1)$ which land in $\bPol(B/A, n-1)$.

The map $\bPol(B/A,n-1)[n]\rightarrow \bPol(B, n-1)[n]$ is compatible with the Lie structures, so we need to show that the image of the mixed brackets vanishes. By definition they come from the action of the convolution algebra
\[\bPol(A, n)=\Conv^0(\coP_{n+1}^{\theta, \cu}\{n+1\}; A)\]
on the relative convolution algebra
\[\bPol(B/A, n-1) = \Conv^0(\coP_{n+1}^{\theta, \cu}\{n+1\}; A, \bPol(B, n-1))\]
given by the last tree in Figure \ref{fig:bracemorphism}. But elements in $\bPol^{\geq 1}(A, n)$ have at least one $A$ input, so the result in $\Conv^0(\coP_{n+1}^{\theta, \cu}\{n+1\}; A, \bPol(B, n-1))$ will have at least one $A$ input. However, the projection map
\[\Conv^0(\coP_{n+1}^{\theta, \cu}\{n+1\}; A, \bPol(B, n-1))\rightarrow \bPol(B, n-1)\]
annihilates all such elements which proves the claim.
\end{proof}

We get a diagram of Lie algebras
\[
\xymatrix{
& \bPol^{\geq 2}(f, n)[n+1] \ar[dl] \ar[dr] & \\
\bPol^{\geq 2}(B, n-1)[n] && \bPol^{\geq 2}(A, n)[n+1]
}
\]

The Lie algebra $\bPol(f, n)[n+1]$ is quite complicated, so in practice we will use the following strict model. Define the graded non-unital $\bP_{n+2}$-algebra
\[\Pol(f, n) = \ker(\Pol(A, n)\longrightarrow \Pol_A(B, n-1)),\]
where the graded non-unital $\bP_{n+2}$-structure on $\Pol(f, n)$ is induced from the one on $\Pol(A, n)$.

\begin{prop}
Suppose $f\colon A\rightarrow B$ is a bifibrant object of $\Arr(\CAlg(M))$. Moreover, assume that
\[\Pol(A, n)\longrightarrow \Pol_A(B, n-1)\]
is surjective. Then we have a quasi-isomorphism of $L_\infty$ algebras
\[\Pol(f, n)[n+1]\cong \bPol(f, n)[n+1].\]
\label{prop:strictrelpolyvectors}
\end{prop}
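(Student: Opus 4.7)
My plan is to compare two homotopy fiber sequences of graded $L_\infty$ algebras and deduce the quasi-isomorphism via the five lemma. First, the assumption that $\Pol(A,n)\to \Pol_A(B,n-1)$ is surjective, together with the definition $\Pol(f,n)=\ker(\Pol(A,n)\to \Pol_A(B,n-1))$, yields a strict short exact sequence of graded non-unital $\bP_{n+2}$-algebras
\[0\longrightarrow \Pol(f,n)\longrightarrow \Pol(A,n)\longrightarrow \Pol_A(B,n-1)\longrightarrow 0,\]
which, after the shift $[n+1]$, becomes a homotopy fiber sequence of graded $L_\infty$ algebras. On the homotopical side, since $\bPol(f,n)=\U(\bPol(A,n),\bPol(B/A,n-1))$, the construction of $\U$ recalled in Section \ref{sect:relpoistopois} produces a homotopy fiber sequence of graded $L_\infty$ algebras
\[\bPol(B/A,n-1)[n]\longrightarrow \bPol(f,n)[n+1]\longrightarrow \bPol(A,n)[n+1],\]
whose right-hand map is the $\infty$-morphism of Proposition \ref{prop:bracemorphisms} followed by the canonical projection from the center $\bZ(\bPol(B/A,n-1))\to \bPol(B/A,n-1)$.

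I would then assemble these into the comparison diagram
\[
\xymatrix{
\Pol(f,n)[n+1] \ar[d] \ar[r] & \Pol(A,n)[n+1] \ar^{\sim}[d] \ar[r] & \Pol_A(B,n-1)[n] \ar^{\sim}[d] \\
\bPol(f,n)[n+1] \ar[r] & \bPol(A,n)[n+1] \ar[r] & \bPol(B/A,n-1)[n]
}
\]
in which the middle and right vertical arrows are the weak equivalences of Proposition \ref{prop:strictpolyvectors} and the corollary to Proposition \ref{prop:strictrelpolyvectors1}. Granted commutativity of the squares up to coherent homotopy, the five lemma applied to the long exact sequences in cohomology of the two horizontal fiber sequences forces the left vertical arrow to be a quasi-isomorphism of underlying complexes, and hence of $L_\infty$ algebras, since the comparison is built as an $L_\infty$ morphism by construction and the $L_\infty$ structure is transferred from the $\bP_{n+2}$-structure on $\Pol(A,n)$ (for which the kernel $\Pol(f,n)$ is a strict sub-$L_\infty$ algebra).

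The main obstacle is verifying the commutativity of the right-hand square, namely comparing the strict restriction map $\Pol(A,n)\to \Pol_A(B,n-1)$ followed by the Harrison-based replacement $\Pol_A(B,n-1)\to \bPol(B/A,n-1)$, with the $\infty$-morphism coming from Proposition \ref{prop:bracemorphisms} composed with the center projection. Both maps agree in weight zero with $f\colon A\to B$ on the underlying commutative algebras, and I would verify the comparison in higher weights by unwinding the explicit brace formulas of Section \ref{sect:relativebrace} on cofibrant representatives, exploiting the bifibrancy of $f$ together with the weak equivalence $\Pol^{int}_A(B,n-1)\to \bPol^{int}(B/A,n-1)$ of homotopy $\bP_{n+1}$-algebras recorded in the corollary above, so that the comparison reduces to the manifest functoriality of Harrison-type resolutions in $f$.
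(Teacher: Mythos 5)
Your approach (compare two fiber sequences and apply the five lemma) differs from the paper's, and it has a genuine gap: the five lemma only applies once you have an actual morphism of fiber sequences, i.e.\ an $L_\infty$ morphism $\Pol(f,n)[n+1]\rightarrow \bPol(f,n)[n+1]$ making the diagram commute, and constructing that morphism is the real content of the proposition, not a formality you can defer. The obvious candidate --- include the strict kernel $\Pol(f,n)$ into $\Pol(A,n)\simeq\bPol(A,n)$, the first summand of $\bPol(f,n)$ --- is not even a chain map: the differential on $\bPol(f,n)=\U(\bPol(A,n),\bPol(B/A,n-1))$ has a component $\bPol(A,n)\rightarrow\bPol(B/A,n-1)$ built from the Harrison/convolution model, which does not literally vanish on elements of the kernel of the strict restriction $\Pol(A,n)\rightarrow\Pol_A(B,n-1)$; it is only nullhomotopic there. (The paper's own remark in Section \ref{sect:relativepoly}, that $\bPol^{int}(B,n-1)\rightarrow\bPol^{int}(B/A,n-1)$ is not compatible with differentials while the composite from $\Pol^{int}_A(B,n-1)$ is, is a warning sign of exactly this kind of mismatch.) Moreover, a square of $L_\infty$ algebras commuting only up to coherent homotopy does not induce a map on strict kernels, so ``granted commutativity of the squares up to coherent homotopy'' begs the question. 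Finally, even with a chain map in hand, $\bPol(f,n)[n+1]$ carries mixed higher brackets $\bPol(A,n)[n+1]\otimes(\bPol(B/A,n-1)[n])^{\otimes k}\rightarrow\bPol(B/A,n-1)$, and you would still need to check that your comparison respects them; your parenthetical claim that the $L_\infty$ structure is ``transferred from the $\bP_{n+2}$-structure on $\Pol(A,n)$'' is not accurate.

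The paper resolves these points differently: it first replaces $\bPol(A,n)$ by $\Pol(A,n)$, then extends the quasi-isomorphism $\Pol_A(B,n-1)\hookrightarrow\bPol(B/A,n-1)$ to a deformation retract and applies the homotopy transfer theorem to obtain a quasi-isomorphic $L_\infty$ structure on $\widetilde{\Pol}(f,n)[n+1]=\Pol(A,n)[n+1]\oplus\Pol_A(B,n-1)[n]$, whose differential component is now the \emph{strict} restriction map. An inspection of the transfer formulas shows that all transferred brackets $(\Pol(A,n)[n+1])^{\otimes m}\rightarrow\Pol_A(B,n-1)$ vanish for $m>1$, so the strict kernel $\Pol(f,n)[n+1]$ includes into $\widetilde{\Pol}(f,n)[n+1]$ as a strict sub-$L_\infty$ algebra, and this inclusion is a quasi-isomorphism precisely because of the surjectivity hypothesis (the kernel of a surjection computes the homotopy fiber). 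If you want to salvage your argument, you should make the transfer step explicit; the five-lemma bookkeeping then becomes the last line of the paper's proof rather than its engine.
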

\begin{proof}
The Lie algebra $\bPol(f, n)[n+1]$ is given by the complex
\[\bPol(A, n)[n+1]\oplus \bPol(B/A, n-1)[n]\]
with the differential from the first term to the second term given by the morphism \[\bPol(A, n)\longrightarrow \bPol(B/A, n-1)\] and the following brackets:
\begin{itemize}
\item The convolution bracket on $\bPol(A, n)$.

\item The convolution bracket on $\bPol(B/A, n-1)$.

\item The action of $\bPol(A, n)$ on $\bPol(B/A, n-1)$.
\end{itemize}

By Proposition \ref{prop:strictpolyvectors} the morphism $\Pol(A, n)\rightarrow \bPol(A, n)$ is a quasi-isomorphism, so we can replace the above dg Lie algebra with
\[\Pol(A, n)[n+1]\oplus \bPol(B/A, n-1)[n].\]

Moreover, the inclusion $\Pol_A(B, n-1)\rightarrow \bPol(B/A, n-1)$ is a quasi-isomorphism. Extending it to a deformation retract and applying the homotopy transfer theorem \cite[Theorem 10.3.9]{LV} we obtain a quasi-isomorphic $L_\infty$ structure on the complex
\[\widetilde{\Pol}(f, n)[n+1] = \Pol(A, n)[n+1]\oplus \Pol_A(B, n-1)[n]\]
with the differential twisted by the morphism
\[\Pol(A, n)\longrightarrow \Pol_A(B, n-1).\]

Using the explicit formulas for the transferred structure, we see that all the brackets
\[[-, \dots, -]_m\colon (\Pol(A, n)[n+1])^{\otimes m}\rightarrow \Pol_A(B, n-1)[2+n-m]\]
vanish for $m>1$. Therefore, the morphism
\[\Pol(f, n)[n+1]\rightarrow \widetilde{\Pol}(f, n)[n+1]\]
is strictly compatible with the $L_\infty$ brackets and is a quasi-isomorphism by the surjectivity assumption.
\end{proof}

\subsection{Coisotropic structures}
\label{sect:coisotropics}

\begin{defn}
Let $f\colon A\rightarrow B$ be a morphism of commutative algebras in $\cM$. The \emph{space of $n$-shifted coisotropic structures} $\Cois(f, n)$ is defined to be the fiber of the forgetful functor
\[\balg_{\bP_{[n+1, n]}}(\cM)^\sim\longrightarrow \Arr(\bCAlg(\cM))^\sim\]
at $f\in \Arr(\bCAlg(\cM))$.
\end{defn}

One has the following alternative point of view on coisotropic structures following \cite[Section 3.4]{CPTVV}. First of all, one has the following additivity statement for Poisson structures shown independently by Rozenblyum and the second author, see \cite[Theorem 2.22]{Sa2}:
\begin{thm}
One has an equivalence of $\infty$-categories
\[\balg_{\bP_{n+1}}(\cM)\cong \balg(\balg_{\bP_n}(\cM)).\]
\label{thm:poissonadditivity}
\end{thm}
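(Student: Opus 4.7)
The plan is to follow the strategy of Dunn additivity for $E_n$-operads, adapted to the setting of Poisson operads via the operadic resolutions and convolution algebra techniques developed in Section~\ref{sect:operadicres} and Section~\ref{sect:braceconstruction}. The first step is to recognize that both sides are categories of algebras over suitable operads: the left-hand side is controlled by $\bP_{n+1}$, and the right-hand side is controlled by an operad $\cO$ whose algebras in $\cM$ are $\bP_n$-algebras equipped with a compatible associative multiplication. I would construct a morphism of operads $\bP_{n+1}\to\cO$ by specifying it on generators: the commutative product of $\bP_{n+1}$ maps to the symmetrization of the associative product (which up to homotopy coincides with the commutative product of the $\bP_n$-structure), while the degree-$(-n)$ Lie bracket of $\bP_{n+1}$ maps to a sum of the $\bP_n$-bracket of degree $1-n$ together with a homotopy witnessing the associativity of the multiplication. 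The extra degree shift arises through an Eckmann--Hilton-type mechanism from the stability of $\balg_{\bP_n}(\cM)$.

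To show this morphism is a weak equivalence, I would work on the Koszul dual side using the resolution $\Omega(\coP_{n+1}^\theta\{n+1\})\to\bP_{n+1}$ together with an analogous resolution of $\cO$ built from the brace operad $\Br_{\coP_n^\theta}$ as in Section~\ref{sect:braces}. By Proposition~\ref{prop:convolutionla}, the space of algebra structures on any object $A$ is the Maurer--Cartan space of the convolution Lie algebra $\Conv(\coP_{n+1}^\theta\{n+1\};A)$ on the left, and an analogous convolution algebra on the right. Combining this with Lemma~\ref{lm:MCfiber} and the Swiss-cheese construction of Section~\ref{sect:SCalgebras}, one could reduce the equivalence of these Maurer--Cartan spaces to a comparison of the underlying cooperads arity by arity, or alternatively by producing an explicit zig-zag of quasi-isomorphisms.

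The main obstacle is pinning down the operad morphism on generators in a homotopy-coherent way. The delicate point is that the $\bP_{n+1}$-bracket of degree $-n$ does not come literally from any operation of $\bP_n$; instead it must arise as a ``shifted commutator'' of the associative multiplication, viewed in the stable $\infty$-category of $\bP_n$-algebras. Making this precise at the chain level requires careful bookkeeping of iterated bar--cobar constructions and a control of higher coherences, and it is exactly this construction that is carried out in \cite[Theorem 2.22]{Sa2}.
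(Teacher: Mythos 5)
The paper does not actually prove this statement: Theorem \ref{thm:poissonadditivity} is imported wholesale from \cite[Theorem 2.22]{Sa2} (proved independently by Rozenblyum and Safronov), and the present paper only records it as an input to the coisotropic story. So there is no internal proof to compare your attempt against, and your final paragraph, which defers the construction to \cite{Sa2}, puts your write-up in exactly the same position as the paper itself --- it is a citation, not a proof.

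Judged on its own terms as a sketch, the second half is in the right spirit: the actual argument of \cite{Sa2} does run through Koszul duality, the Calaque--Willwacher brace operad $\Br_{\coP_n^\theta}$, and the quasi-isomorphism $\Omega(\coP_{n+1}^\theta\{1\})\to\Br_{\coP_n^\theta}$, so invoking the machinery of Section \ref{sect:braceconstruction} is apt. But your first step contains the genuine gap. The right-hand side $\balg(\balg_{\bP_n}(\cM))$ is the $\infty$-category of associative algebra objects in the symmetric monoidal $\infty$-category of $\bP_n$-algebras; it is \emph{not} given to you as the category of algebras over a single-colored dg operad $\cO$ in $\cM$, and rectifying it to one (something like a Hadamard tensor product $\Ass\otimes\bP_n$) is precisely the hard point --- this is the same obstruction that makes Dunn additivity for $\bE_n$-operads nontrivial, since the naive tensor product of operads is not homotopy invariant. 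Until that rectification is established, there is nothing to map $\bP_{n+1}$ into by generators and relations. There is also a degree problem in your description of the putative map: the $\bP_{n+1}$-bracket is a single operation of degree $-n$, so its image cannot literally contain the degree-$(1-n)$ bracket of $\bP_n$ as a summand; the new bracket arises as the shifted commutator of the associative multiplication, and the $\bP_n$-bracket enters only through the higher coherences of the equivalence, not as a component of the image of a generator. These are exactly the points that \cite{Sa2} is designed to handle, and they cannot be waved through at the level of generators.
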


In other words, a $\bP_{n+1}$-algebra is equivalent to an associative algebra object in $\bP_n$-algebras. Let us now denote by $\blmod(\balg_{\bP_n}(\cM))$ the $\infty$-category of pairs $(A, B)$ of an associative algebra object $A$ in $\bP_n$-algebras and a left $A$-module $B$.

The following is \cite[Corollary 3.8]{Sa2}:
\begin{thm}
One has a commutative diagram of $\infty$-categories
\[
\xymatrix{
\balg_{\bP_{[n+1, n]}}(\cM) \ar^{\sim}[rr] \ar[dr] && \blmod(\balg_{\bP_n}(\cM)) \ar[dl] \\
& \Arr(\bCAlg(\cM))
}
\]
\label{thm:relpoissonadditivity}
\end{thm}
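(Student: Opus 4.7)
The plan is to deduce the equivalence from two ingredients that have already been flagged in the introduction: Poisson additivity (Theorem~\ref{thm:poissonadditivity}) and the elementary characterization of bimodule structures on a right regular module as algebra morphisms (Lemma~\ref{lm:morphismbimodule}), applied internally to $\balg_{\bP_n}(\cM)$.

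First I would use Poisson additivity to replace $\bP_{n+1}$-algebras by associative algebras in $\bP_n$-algebras. Given a $\bP_{[n+1, n]}$-algebra $(A, B, F)$, under additivity $A$ acquires the shape of an associative algebra object $A^{\mathrm{ass}}\in\balg(\balg_{\bP_n}(\cM))$, while $B$ is by definition a $\bP_n$-algebra. The key step — and the main technical obstacle — is to identify the Poisson center $\rZ(B)$, viewed via additivity as an associative algebra in $\balg_{\bP_n}(\cM)$, with the endomorphism object $\End_{\brmod_B}(B)$ of $B$ regarded as a right $B$-module inside $\balg_{\bP_n}(\cM)$. Informally this encodes the folklore statement that endomorphisms of the regular right module over a $\bP_n$-algebra, internal to the category of $\bP_n$-algebras, are precisely Poisson central elements; implementing it in the derived/$\infty$-categorical setting requires a cofibrant model of $\rZ(B)$ compatible with the associative structure produced by the additivity equivalence, which is exactly what the brace construction of Section~\ref{sect:braceconstruction} (and its incarnation $\widetilde{\bP}_{[n+1,n]}$) is designed to provide.

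Granted this identification, the horizontal equivalence follows formally: a $\bP_{n+1}$-morphism $F\colon A\to \rZ(B)$ is the same datum as a morphism $A^{\mathrm{ass}}\to \End_{\brmod_B}(B)$ of associative algebras in $\balg_{\bP_n}(\cM)$, which by Lemma~\ref{lm:morphismbimodule} applied inside the symmetric monoidal $\infty$-category $\balg_{\bP_n}(\cM)$ is the same as a lift of the regular right $B$-module $B_B$ to an $(A^{\mathrm{ass}}, B)$-bimodule, i.e.\ a left $A$-module structure on $B$ in $\blmod(\balg_{\bP_n}(\cM))$. Naturality of all the identifications produces the desired equivalence of $\infty$-categories.

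Finally, for the commutativity of the triangle over $\Arr(\bCAlg(\cM))$, one traces underlying commutative structures: the forgetful functor from $\bP_{[n+1, n]}$-algebras sends $(A,B,F)$ to the composite $A\to \rZ(B)\to B$ of commutative algebras, whereas a left $A^{\mathrm{ass}}$-module structure on $B$ yields the underlying commutative morphism $A\to \End_{\brmod_B}(B)\to B$ by evaluation on the unit of $B$. Under the identification $\rZ(B)\simeq \End_{\brmod_B}(B)$ these two recipes agree, giving the required commutativity.
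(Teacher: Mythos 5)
First, a remark on what you are comparing against: the paper does not prove this statement at all --- it is quoted as \cite[Corollary 3.8]{Sa2} --- so any argument here is necessarily a sketch of that external proof. Your overall strategy (Poisson additivity, an identification of the Poisson center with an endomorphism object, and the universal property of endomorphism objects) is indeed the shape of the argument in \cite{Sa2}. However, as written your proposal misidentifies \emph{which} endomorphism object is relevant, and it misapplies Lemma \ref{lm:morphismbimodule}.

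In $\blmod(\balg_{\bP_n}(\cM))$ the object $B$ is merely an object of $\cC=\balg_{\bP_n}(\cM)$ equipped with a left action of the associative algebra object $A$; it is \emph{not} itself an associative algebra object of $\cC$. Hence ``$B$ regarded as a right $B$-module inside $\balg_{\bP_n}(\cM)$'' does not parse, and Lemma \ref{lm:morphismbimodule} --- which concerns lifts of the right regular module $B_B$ to an $(A,B)$-bimodule and presupposes $B\in\balg(\cC)$ --- is not the relevant tool here (it is the tool for Theorem \ref{thm:graphcoisotropic}). Worse, $\End_{\brmod_B}(B)\simeq B$ as associative algebras, so identifying $\rZ(B)$ with it would force $\rZ(B)\simeq B$, which is false. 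What you actually need is the endomorphism object $\underline{\End}_{\cC}(B)$ of $B$ as an object of the symmetric monoidal $\infty$-category $\cC=\balg_{\bP_n}(\cM)$ in the sense of \cite[Section 4.7.1]{Lu}, together with the fact that a left $A$-module structure on $B$ is the same as an associative algebra map $A\to\underline{\End}_{\cC}(B)$. The remaining --- and by far the hardest --- step is then the equivalence $\underline{\End}_{\cC}(B)\simeq\bZ(B)$ in $\balg(\balg_{\bP_n}(\cM))\simeq\balg_{\bP_{n+1}}(\cM)$, compatibly with additivity; this is the $\bP_n$-analogue of the Deligne conjecture and is the main content of \cite{Sa2}. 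It is not supplied by the brace construction of Section \ref{sect:braceconstruction} alone, which only produces the $\Br_{\coP_n}$-action on $\bZ(B)$ and the resolution $\widetilde{\bP}_{[n+1,n]}$, not the comparison with the $\infty$-categorical endomorphism object. So the proposal points in the right direction but leaves both of these gaps open.
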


Therefore, a coisotropic structure on the morphism $f\colon A\rightarrow B$ consists of a $\bP_{n+1}$-algebra structure on $A$, a $\bP_n$-algebra structure on $B$ and an associative action of $A$ on $B$, all compatible with the original morphism $f$.

Returning to our definition of coisotropic structures, we have forgetful maps
\[
\xymatrix{
& \Cois(f, n) \ar[dl] \ar[dr] & \\
\Pois(B, n-1) && \Pois(A, n).
}
\]

One has the following explicit way to compute this diagram of spaces.

\begin{thm}
Let $f\colon A\rightarrow B$ be a morphism of commutative algebras in $\cM$. Then one has an equivalence of spaces
\[\Cois(f, n)\cong \Map_{\balg_{\Lie}^{gr}}(k(2)[-1], \bPol(f, n)[n+1])\cong \underline{\MC}(\bPol^{\geq 2}(f, n)[n+1])\]
compatible with the diagram of spaces
\[
\xymatrix{
& \Cois(f, n) \ar[dl] \ar[dr] & \\
\Pois(B, n-1) && \Pois(A, n)
}
\]
on the left and the diagram of Lie algebras
\[
\xymatrix{
& \bPol^{\geq 2}(f, n)[n+1] \ar[dl] \ar[dr] & \\
\bPol^{\geq 2}(B, n-1)[n] && \bPol^{\geq 2}(A, n)[n+1]
}
\]
on the right.
\label{thm:coisotropicpolyvectors}
\end{thm}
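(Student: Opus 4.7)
The strategy is to mimic the proof of Theorem \ref{thm:poissonpolyvectors}, using the cofibrant Swiss--cheese resolution $\widetilde{\bP}_{[n+1,n]}\to \bP_{[n+1,n]}$ from Proposition \ref{prop:relPnresolution} in place of the bar--cobar resolution of $\bP_{n+1}$. Assuming $f\colon A\to B$ is bifibrant in $\Arr(\CAlg(M))$, the space $\Cois(f, n)$ is identified with the homotopy fiber at $f$ of the forgetful morphism
\[\Map_{2\Op_k}(\bP_{[n+1,n]}, \End_{A,B}) \longrightarrow \Map_{2\Op_k}(\Comm_{\to}, \End_{A,B}),\]
where $\Comm_{\to}$ is the two-colored operad controlling a morphism of commutative algebras. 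By Proposition \ref{prop:SCcylinderla}, the upper mapping space becomes $\underline{\MC}(\cL)$, where $\cL := \cL(\coP_{n+1}^\theta\{n+1\}, \coP_n^\theta; A, B)$; by the same token (applied to the analogous $\coLie^\theta$-based resolution of $\Comm_{\to}$), the lower one becomes $\underline{\MC}(\cL_0)$ where $\cL_0$ is built from the $\coLie^\theta$ sub-cooperads of $\coP^\theta_{n+1}\{n+1\}$ and $\coP^\theta_n$.

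The projection $\cL\to \cL_0$ admits a splitting by the inclusion of Lie into Poisson cogenerators, so Lemma \ref{lm:MCfiber} identifies the homotopy fiber at the MC element $\xi = (m_A, m_B, f)\in \cL_0$ (defining the given commutative morphism) with the MC space of $\mathfrak{K} := \ker(\cL\to \cL_0)$ twisted by $i(\xi)$. The main step is to identify this twisted $\mathfrak{K}$ with $\bPol^{\geq 2}(f, n)[n+1]$ as an $L_\infty$ algebra. Summand by summand: the $A$-side of $\mathfrak{K}$ twisted by $m_A$ reproduces $\bPol^{\geq 2}(A, n)[n+1]$ by the argument of Theorem \ref{thm:poissonpolyvectors}; the $B$-side twisted by $m_B$ gives $\bPol^{\geq 2}(B, n-1)[n]$; and the mixed summand twisted by all three components of $\xi$ becomes $\bPol^{\geq 1}(B/A, n-1)[n]$ by the relative convolution algebra description of Section \ref{sect:relativepoly}. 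The cross-brackets in the $L_\infty$ structure on $\cL$ (Proposition \ref{prop:SCcylinderla}) are, by the very construction of $\SC(\cC_1, \cC_2)$ and of the relative brace operad (Proposition \ref{prop:bracemorphisms}), exactly the brackets on $\bPol(f, n)[n+1] = \U(\bPol(A,n), \bPol(B/A, n-1))[n+1]$ from Section \ref{sect:relpoistopois}.

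Finally, Proposition \ref{prop:MCgradeddglie} converts $\underline{\MC}(\bPol^{\geq 2}(f, n)[n+1])$ into $\Map_{\balg_{\Lie}^{gr}}(k(2)[-1], \bPol(f, n)[n+1])$, establishing both claimed equivalences. Compatibility with the forgetful maps to $\Pois(A,n)$ and $\Pois(B, n-1)$ is immediate, as on the MC side these correspond to projecting $\mathfrak{K}$ onto its $A$- and $B$-components respectively. The principal obstacle is the term-by-term identification of the brackets on the twisted $\mathfrak{K}$ with those on $\U(\bPol(A,n), \bPol(B/A, n-1))[n+1]$: this requires carefully matching the pitchfork-indexed differential $\d_4$ of $\SC(\cC_1, \cC_2)$ with the formula for the $\infty$-morphism $\bPol(A,n)\to \bZ(\bPol(B/A, n-1))$ of Proposition \ref{prop:bracemorphisms}, together with the homotopy-fiber definition of $\U$. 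While this identification is essentially forced by the design of these constructions, it is where the bulk of the technical work lies.
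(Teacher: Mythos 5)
Your proposal follows essentially the same route as the paper's proof: identify $\Cois(f,n)$ as the homotopy fiber of operad mapping spaces, pass to Maurer--Cartan spaces of the cylinder $L_\infty$-algebras via Propositions \ref{prop:relPnresolution} and \ref{prop:SCcylinderla}, compute the fiber with Lemma \ref{lm:MCfiber}, match the twisted kernel with $\bPol^{\geq 2}(f,n)[n+1]$ using Proposition \ref{prop:bracemorphisms}, and conclude with Proposition \ref{prop:MCgradeddglie}. The only cosmetic deviation is that the paper identifies the target mapping space with $\Conv(\coLie^\theta\{1\}; A\rightarrow B)$ and then compares it to the cylinder algebra of Dolgushev--Willwacher, whereas you invoke a two-colored resolution of $\Comm_{\to}$ directly; the substance is identical.
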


We will give a proof of this theorem in the next section. Here is one application of the above theorem. Recall that classically the identity morphism from a Poisson manifold to itself is coisotropic. Similarly, we show that in the derived context there is a \emph{unique} coisotropic structure on the identity.

\begin{prop}
Let $A$ be a commutative algebra in $\cM$ and $\id\colon A\rightarrow A$ the identity morphism. Then the forgetful map
\[\Cois(\id, n)\rightarrow \Pois(A, n)\]
is a weak equivalence.
\label{prop:idcoisotropic}
\end{prop}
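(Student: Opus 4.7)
The plan is to combine Theorems \ref{thm:coisotropicpolyvectors} and \ref{thm:poissonpolyvectors} to reduce the claim to a statement about Maurer--Cartan spaces of admissibly filtered $L_\infty$-algebras, and then show that the relevant projection of $L_\infty$-algebras is a quasi-isomorphism.

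Under the equivalences furnished by those two theorems, the forgetful map becomes the map of Maurer--Cartan spaces
\[\underline{\MC}(\bPol^{\geq 2}(\id, n)[n+1])\longrightarrow \underline{\MC}(\bPol^{\geq 2}(A, n)[n+1])\]
induced by the canonical projection $p$ of graded Lie algebras. Since the weight filtrations are admissible (in fact pronilpotent), a quasi-isomorphism of such filtered $L_\infty$-algebras induces a weak equivalence on $\underline{\MC}$; this is a standard computation that one carries out level by level using the tower $\underline{\MC}(\g) = \lim_a \underline{\MC}(\g/\g^a)$ together with Lemma \ref{lm:MCfiber}. Therefore it suffices to prove that $p$ is a quasi-isomorphism of $L_\infty$-algebras.

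The key input is the fiber sequence attached to the functor $\U$ from Section \ref{sect:relpoistopois}. Applied to $\bPol(\id, n) = \U(\bPol(A, n), \bPol(A/A, n-1))$, it yields a homotopy fiber sequence of non-unital $\bP_{n+2}$-algebras
\[\bPol(A/A, n-1)[-1]\longrightarrow \bPol(\id, n)\longrightarrow \bPol(A, n).\]
Truncating to weights $\geq 2$, the homotopy fiber of $p$ becomes $\bPol^{\geq 2}(A/A, n-1)[n]$, and so the argument reduces to verifying that $\bPol^{\geq 1}(A/A, n-1)\simeq 0$. This final step follows from Proposition \ref{prop:strictrelpolyvectors1} applied to a bifibrant model of the identity arrow in $\Arr(\CAlg(M))$: for $f = \id$ one has $\bL_{A/A}\simeq 0$, so $\Sym_A^k(\bL_{A/A}[n])$ is contractible for every $k\geq 1$, making $\bPol^{\geq 1}(A/A, n-1)$ vanish.

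The main obstacle is the initial reduction, namely ensuring that the weight-$\geq 2$ truncation of the fiber sequence for $\U$ is still a fiber sequence of filtered $L_\infty$-algebras with admissible filtrations, so that the vanishing of the fiber really translates into $p$ being a quasi-isomorphism and then into an equivalence of $\underline{\MC}$-spaces. Once this bookkeeping is carried out, the vanishing argument for the relative polyvectors at the identity is immediate and the statement follows.
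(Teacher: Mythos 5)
Your proposal is correct and follows essentially the same route as the paper: the paper likewise combines Theorem \ref{thm:coisotropicpolyvectors} with the fiber sequence $\bPol(A/A, n-1)[n]\to \bPol(\id, n)[n+1]\to \bPol(A, n)[n+1]$ coming from the $\U$-construction, and observes that $\bPol(A/A,n-1)$ is concentrated in weight $0$ (equivalently, your $\bPol^{\geq 1}(A/A,n-1)\simeq 0$), so that applying $\Map_{\alg_{\Lie}^{gr}}(k(2)[-1],-)$, which only sees weights $\geq 2$, kills the fiber. Your extra bookkeeping about admissible filtrations and Lemma \ref{lm:MCfiber} is a more explicit version of the same step the paper carries out implicitly.
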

\begin{proof}
We have a fiber sequence of graded Lie algebras
\[\bPol(A/A, n-1)[n]\longrightarrow \bPol(\id, n)[n+1]\longrightarrow \bPol(A, n)[n+1].\]

But $\bPol(A/A, n-1)\cong k$, hence the morphism $\bPol(\id, n)[n+1]\rightarrow \bPol(A, n)[n+1]$ becomes an equivalence after applying $\Map_{\alg_{\Lie}^{gr}}(k(2)[-1], -)$.
\end{proof}

\begin{defn}
The forgetful map
\[\Pois(A, n)\longrightarrow \Pois(A, n-1)\]
is the composite
\[\Pois(A, n)\stackrel{\sim}\longleftarrow \Cois(\id, n)\longrightarrow \Pois(A, n-1).\]
\end{defn}
This forgetful map is the classical analog of the forgetful functor $\alg_{\bE_{n+1}}\rightarrow \alg_{\bE_n}$.

\subsection{Proof of Theorem \ref{thm:coisotropicpolyvectors}}

Assume $f\colon A\rightarrow B$ is a bifibrant object in $\Arr(M)$ with respect to the projective model structure. Then $\Cois(f, n)$ is equivalent to the homotopy fiber of
\[\Map_{2\Op_k}(\bP_{[n+1, n]}, \End_{A, B})\rightarrow \Map_{\Op_k}(\Comm, \End_{A\rightarrow B})\]
at $f\colon A\rightarrow B$.

By Proposition \ref{prop:convolutionla} we can identify
\[\Map_{\Op_k}(\Comm, \End_{A\rightarrow B})\cong \Conv(\coLie^\theta\{1\}; A\rightarrow B),\]
where we can identify $\Conv(\coLie^\theta\{1\}; A\rightarrow B)$ as a complex with
\[\ker(\Conv(\coLie^\theta\{1\}; A)\oplus \Conv(\coLie^\theta\{1\}; B)\rightarrow \Hom(\coLie^\theta\{1\}(A), B)).\]

Recall from \cite[Section 3.2]{DW2} (a related construction also appears in \cite{FZ}) the cylinder $L_\infty$-algebra \[\cL(\coLie^\theta\{1\}; A, B)=\mathrm{Cyl}(\coLie^\theta\{1\}; A, B)^f\] which fits into a \emph{homotopy} fiber sequence of complexes
\[\cL(\coLie^\theta\{1\}; A, B)\longrightarrow \Conv(\coLie^\theta\{1\}; A)\oplus \Conv(\coLie^\theta\{1\}; B)\longrightarrow \Hom(\coLie^\theta\{1\}(A), B).\]

Since $A\rightarrow B$ is a cofibration in $M$ and $B$ is fibrant, the obvious morphism
\[\Conv(\coLie^\theta\{1\}; A\rightarrow B)\rightarrow \cL(\coLie^\theta\{1\}; A, B)\]
is a quasi-isomorphism. Moreover, it is strictly compatible with the $L_\infty$ brackets.

Combining Propositions \ref{prop:relPnresolution} and \ref{prop:SCcylinderla} we obtain a commutative diagram of spaces
\[
\xymatrix{
\Map_{2\Op_k}(\bP_{[n+1, n]}, \End_{A, B}) \ar[r] \ar^{\sim}[d] & \Map_{2\Op_k}(\Comm, \End_{A\rightarrow B}) \ar^{\sim}[d] \\
\underline{\MC}(\cL(\coP_{n+1}^\theta\{n+1\}, \coP_n^\theta; A, B)) \ar[r] & \underline{\MC}(\cL(\coLie^\theta\{1\}; A, B))
}
\]

By Lemma \ref{lm:MCfiber} we can identify the fiber of the bottom map with the space of Maurer--Cartan elements in the $L_\infty$ algebra
\begin{align*}
&\Hom(\Sym^{\geq 2}(\coLie^\theta(A[1])[n]), A)[n+1]\\
\oplus &\Hom(\Sym^{\geq 2}(\coLie^\theta(B[1])[n-1]), B)[n]\\
\oplus &\Hom(\Sym^{\geq 2}(\coLie^\theta(A[1])[n]) \otimes \Sym(\coLie^\theta(B[1])[n-1]), B)[n]\\
\oplus &\Hom(\Sym^{\geq 1}(\coLie^\theta(A[1])[n]) \otimes \Sym^{\geq 1}(\coLie^\theta(B[1])[n-1]), B)[n].
\end{align*}

We can identify the first term with the completion of $\bPol(A, n)[n+1]$ in weights $2$ and above. The rest of the terms are identified with the completion of $\bPol(B/A, n-1)[n]$ in weights $2$ and above. It is easy to see that this identification is compatible with the Lie bracket on $\bPol(A, n)[n+1]$ and the $L_\infty$ brackets on $\bPol(B/A, n-1)[n]$.

The mixed brackets given by the aciton of $\bPol(A, n)[n+1]$ on $\bPol(B/A, n-1)[n]$ can be identified as follows. Recall from Proposition \ref{prop:bracemorphisms} that the $\infty$-morphism of $\Br_{\coP_{n+1}^\theta}$-algebras
\[\bPol(A, n)[n+1]\longrightarrow \Conv^0_f(\coP_{n+1}^{\theta, \cu}, \bPol(B/A, n-1)[n])\]
has an underlying strict Lie morphism which factors through
\[\bPol(A, n)[n+1]\longrightarrow \End(\bPol(B/A, n-1)[n])\]
and which is given by the precomposition map of the convolution algebra of $A$ on the relative convolution algebra of $B/A$. But this exactly coincides with the mixed bracket in $\cL(\coP_{n+1}^\theta\{n+1\}, \coP_n^\theta; A, B)$ and hence we have identified $\Cois(f, n)$ with the space of Maurer--Cartan elements in $\bPol(f, n)^{\geq 2}$.

The claim therefore follows from Proposition \ref{prop:MCgradeddglie}.

\subsection{Poisson morphisms}

Recall that classically a Poisson morphism $f\colon X\rightarrow Y$ between two Poisson manifolds can be characterized by the property that the graph $X\rightarrow X\times \overline{Y}$ is coisotropic, where $\overline{Y}$ is the same manifold with the opposite Poisson structure. In this section we define a derived notion of Poisson morphisms and show that an analogue of this result holds in the derived setting as well.

\begin{defn}
Suppose $A$ and $B$ are commutative algebras in $\cM$ and $f\colon A\rightarrow B$ is a morphism of commutative algebras. We define the \emph{space $\Pois(f, n)$ of $n$-shifted Poisson structures} on the morphism $f$ to be the fiber of the forgetful functor
\[\Arr(\balg_{\bP_{n+1}}(\cM))^\sim\longrightarrow \Arr(\bCAlg(\cM))^\sim\]
at $f\colon A\rightarrow B$.
\end{defn}

We can also consider the morphism $f\colon A\rightarrow B$ as a commutative algebra in $\Arr(\cM)$. In particular, Theorem \ref{thm:poissonpolyvectors} gives an efficient method of computing the space of Poisson structures on $f$.

We begin with the following general paradigm. Suppose $\cC$ is a monoidal $\infty$-category which admits sifted colimits and which are preserved by the tensor product. Denote by $\bimod{}{}(\cC)$ the $\infty$-category of triples $(A, B, M)$, where $A,B\in\balg(\cC)$ and $M$ is an $(A,B)$-bimodule. Similarly, let $\brmod(\cC)$ be the $\infty$-category of pairs $(B, M)$ where $B\in\balg(\cC)$ and $M$ is a right $B$-module. We have a functor
\[\balg(\cC)\rightarrow \brmod(\cC)\]
given by sending an associative algebra $B$ to the canonical right $B$-module $B$.

\begin{lm}
We have a Cartesian square of $\infty$-categories
\begin{equation}
\xymatrix{
\Arr(\balg(\cC)) \ar[r] \ar^{\mathrm{target}}[d] & \bimod{}{}(\cC) \ar[d] \\
\balg(\cC) \ar[r] & \brmod(\cC).
}
\label{eq:morphismbimodule}
\end{equation}
\label{lm:morphismbimodule}
\end{lm}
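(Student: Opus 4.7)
The strategy is to verify that the square is Cartesian by unpacking a point of the pullback $\bimod{}{}(\cC)\times_{\brmod(\cC)}\balg(\cC)$ and identifying it with a morphism of associative algebras, using the universal property of endomorphism algebras.

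A point of the pullback consists of $B\in\balg(\cC)$, a bimodule $(A,B',M)\in\bimod{}{}(\cC)$, and an equivalence $(B,B_B)\simeq(B',M)$ in $\brmod(\cC)$; absorbing this equivalence into $B'$ and $M$, such a point reduces to the datum of an associative algebra $A$ together with a lift of the canonical right $B$-module $B_B$ to an $(A,B)$-bimodule. The central input, provided by Lurie's theory of endomorphism objects under the sifted-cocompleteness hypotheses on $\cC$ (see \cite[Section 4.7.1]{Lu}), is that extending a right $B$-module $M$ to an $(A,B)$-bimodule is the same as giving a morphism of associative algebras $A\to\End_{\brmod_B(\cC)}(M)$. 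Combined with the canonical equivalence $\End_{\brmod_B(\cC)}(B_B)\simeq B$, which follows from the fact that $B_B$ is freely generated as a right $B$-module by its unit, this shows that a point of the pullback is precisely a morphism $A\to B$ in $\balg(\cC)$, i.e. a point of $\Arr(\balg(\cC))$.

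To promote this identification on objects to an equivalence of $\infty$-categories, I would check that both sides are coCartesian fibrations over $\balg(\cC)$ (via the target functor on $\Arr(\balg(\cC))$ and via the left vertical on the pullback, respectively) and that the natural comparison functor $\Arr(\balg(\cC))\to\bimod{}{}(\cC)\times_{\brmod(\cC)}\balg(\cC)$ preserves coCartesian edges. It then suffices to verify the fiberwise equivalence $\balg(\cC)_{/B}\simeq\operatorname{fib}_{(B,B_B)}\bigl(\bimod{}{}(\cC)\to\brmod(\cC)\bigr)$ for each $B$, which is exactly the object-level identification above, now applied parametrically in $B$. The main obstacle will be the rigorous $\infty$-categorical invocation of the endomorphism algebra universal property together with the identification $\End_{\brmod_B(\cC)}(B_B)\simeq B$; once these are in place, everything else amounts to bookkeeping of the pullback data.
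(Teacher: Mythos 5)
Your proposal is correct, but it takes a genuinely different route from the paper's. The paper deduces the lemma from Morita theory: it uses \cite[Theorem 4.8.4.1]{Lu} to identify $\bimod{}{}(\cC)$ with a pullback involving $\Arr(\blmod_\cC)$, so that the square in question becomes a square of pointed $\cC$-module categories, and then concludes from the full faithfulness of $A\mapsto \brmod_A(\cC)$ (\cite[Theorem 4.8.5.5]{Lu}). You instead argue via endomorphism objects: the fiber of $\bimod{}{}(\cC)\rightarrow\brmod(\cC)$ over $(B, B_B)$ is the $\infty$-category of algebras acting on $B_B$ from the left inside $\brmod_B(\cC)$, which by Lurie's endomorphism-object theorem is $\balg(\cC)_{/\End(B_B)}\simeq \balg(\cC)_{/B}$, i.e. the fiber of the target functor; you then globalize over $\balg(\cC)$ by a coCartesian-fibration argument. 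This is precisely the heuristic the authors sketch in their introduction, made rigorous. When writing it up, be explicit about two points: (i) the fiberwise identification needs $\bimod{A}{B}(\cC)\simeq \blmod_A(\brmod_B(\cC))$ (\cite[Theorem 4.3.2.7]{Lu}) in order to place yourself in the setting where the \emph{categorical} form of the endomorphism-object result (\cite[Corollary 4.7.1.40]{Lu}) applies, since the object-level bijection alone does not identify the fibers as $\infty$-categories; (ii) the coCartesian lifts in the pullback over a map $B\rightarrow B'$ are given by base change $M\mapsto M\otimes_B B'$, which is exactly where the hypothesis that $\cC$ admits sifted colimits compatible with the tensor product enters. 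Both routes ultimately rest on Chapter 4 of \cite{Lu}: the paper's is shorter given the machinery of Section 4.8 there, while yours uses only the more elementary Sections 4.3 and 4.7 at the cost of the fibration bookkeeping you already flag.
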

\begin{proof}
This is a slight variant of \cite[Corollary 4.8.5.6]{Lu}.

Let $\blmod_\cC$ be the $\infty$-category of $\cC$-module categories and $\blmod_\cC^{\ast}$ the $\infty$-category of pointed $\cC$-module categories, i.e.
\[\blmod_\cC^{\ast} = (\blmod_\cC)_{\cC/}.\]

By \cite[Theorem 4.8.4.1]{Lu} we have a Cartesian diagram of $\infty$-categories
\[
\xymatrix{
\bimod{}{}(\cC) \ar[d] \ar[r] & \Arr(\blmod_\cC) \ar[d] \\
\balg(\cC)\times \balg(\cC) \ar[r] & \blmod_\cC \times \blmod_\cC
}
\]
where $\bimod{}{}(\cC)\rightarrow \Arr(\blmod_\cC)$ sends a triple $(A, B, M)$ to the functor
\[-\otimes_A M\colon \brmod_A(\cC)\longrightarrow \brmod_B(\cC)\]
determined by $M$. Therefore, the pullback $P$ of the diagram \eqref{eq:morphismbimodule} fits into a Cartesian square
\[
\xymatrix{
P \ar[d] \ar[r] & \Arr(\blmod_{\cC}^\ast) \ar[d] \\
\balg(\cC)\times \balg(\cC) \ar[r] & \blmod_{\cC}^\ast \times \blmod_{\cC}^\ast
}
\]

By \cite[Theorem 4.8.5.5]{Lu} the functor $\balg(\cC)\rightarrow \blmod_\cC^{\ast}$ given by $A\mapsto \brmod_A(\cC)$ is fully faithful. Therefore, we have a Cartesian diagram of $\infty$-categories
\[
\xymatrix{
\Arr(\balg(\cC)) \ar[r] \ar[d] & \Arr(\blmod_\cC^{\ast}) \ar[d] \\
\balg(\cC)\times \balg(\cC) \ar[r] & \blmod_\cC^{\ast}\times \blmod_\cC^{\ast}
}
\]
and hence $P\cong \Arr(\balg(\cC))$.
\end{proof}

To prove the following theorem we are going to use the above lemma in the cases when $\cC=\balg_{\bP_n}(\cM)$ and $\cC=\bCAlg(\cM)$.

\begin{thm}
Let $f\colon A\rightarrow B$ be a morphism of commutative algebras in $\cM$ and $g\colon A\otimes B\rightarrow B$ its graph, i.e. $g(x\otimes y) = f(x)y$. Then we have a Cartesian square of spaces
\[
\xymatrix{
\Pois(f, n) \ar[r] \ar[d] & \Pois(A, n)\times \Pois(B, n) \ar^{\id\times \opp}[d] \\
\Cois(g, n) \ar[r] & \Pois(A\otimes B, n).
}
\]
\label{thm:graphcoisotropic}
\end{thm}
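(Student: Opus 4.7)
The strategy is to reduce the claim to the general categorical Lemma \ref{lm:morphismbimodule} applied to $\cC = \balg_{\bP_n}(\cM)$, combined with the Poisson additivity theorems. Apply the lemma with $\cC = \balg_{\bP_n}(\cM)$ to obtain a Cartesian square of $\infty$-categories
\[
\xymatrix{
\Arr(\balg(\balg_{\bP_n}(\cM))) \ar[r] \ar[d] & \bimod{}{}(\balg_{\bP_n}(\cM)) \ar[d] \\
\balg(\balg_{\bP_n}(\cM)) \ar[r] & \brmod(\balg_{\bP_n}(\cM)).
}
\]
By Poisson additivity (Theorem \ref{thm:poissonadditivity}) the left column identifies with $\Arr(\balg_{\bP_{n+1}}(\cM)) \to \balg_{\bP_{n+1}}(\cM)$. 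Using the standard equivalence $\bimod{A}{B}(\cC) \simeq \blmod(\cC)$ over $A \otimes_\cC B^{op}$ together with the relative additivity Theorem \ref{thm:relpoissonadditivity}, I identify the top-right corner with a slice of $\balg_{\bP_{[n+1,n]}}(\cM)$. The crucial compatibility is that $B^{op}$ in $\balg(\balg_{\bP_n}(\cM))$ corresponds under additivity to the $\bP_{n+1}$-algebra $\overline{B}$ with opposite bracket, so that $A \otimes_\cC B^{op}$ is precisely $A \otimes \overline{B}$.

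Next, apply the underlying-commutative forgetful functors throughout to produce a morphism from the above Cartesian square of categories to the analogous Cartesian square for $\cC = \bCAlg(\cM)$, in which bimodules are simply $A \otimes B$-algebras. Taking homotopy fibers of the vertical forgetful maps at the four commutative objects describing the graph, namely $f$ in $\Arr(\bCAlg(\cM))$, $B$ in $\bCAlg(\cM)$, the triple $(A, B, B)$ in $\bimod{}{}(\bCAlg(\cM))$ whose underlying morphism of commutative algebras is the graph $g \colon A\otimes B \to B$, and $(B, B_B)$ in $\brmod(\bCAlg(\cM))$, yields a Cartesian square of spaces.

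The top-left fiber is $\Pois(f, n)$ by definition. The bottom-left fiber is $\Pois(B, n)$. The bottom-right fiber is also $\Pois(B, n)$, since the right $B$-module structure on $B_B$ in $\cC$ is determined uniquely by an associative algebra structure on $B$ in $\cC$ (the right-module analog of Proposition \ref{prop:idcoisotropic}), and the induced bottom map is an equivalence. The top-right fiber, using the identifications in the first paragraph, is exactly $\Cois(g, n) \times_{\Pois(A \otimes B, n)} (\Pois(A, n) \times \Pois(B, n))$ with the right-hand map being $\id \times \opp$ followed by the natural sum, since bimodule data $(\pi_A, \pi_B, \kappa)$ with underlying graph-arrow corresponds precisely to $\bP_{[n+1,n]}$-structures on $(A \otimes \overline{B}, B)$ with underlying commutative $g$ and specified $\bP_{n+1}$-structure $\pi_A + \pi_B^{\opp}$ on $A \otimes \overline{B}$. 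Since the bottom map is an equivalence, the Cartesian square collapses to an equivalence between the top-left and top-right corners, which is precisely the Cartesian square in the theorem.

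The main obstacle is verifying the compatibility of the opposite operation and tensor products with the Poisson additivity equivalence in order to produce $\overline{B}$ on the right-hand side. This requires that the involution $B \mapsto B^{op}$ on $\balg(\balg_{\bP_n}(\cM))$ and the tensor product $\otimes_\cC$ of associative algebras in $\balg_{\bP_n}(\cM)$ correspond under additivity to the involution $B \mapsto \overline{B}$ and the tensor product of $\bP_{n+1}$-algebras respectively, which is the geometric input ensuring that the Cartesian square involves $A \otimes \overline{B}$ rather than $A \otimes B$.
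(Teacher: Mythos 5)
Your proposal is correct and follows essentially the same route as the paper: both arguments combine Lemma \ref{lm:morphismbimodule} with the Cartesian square expressing bimodules as left modules over $A\otimes B^{op}$, transport everything through the Poisson additivity equivalences of Theorems \ref{thm:poissonadditivity} and \ref{thm:relpoissonadditivity}, and use the essential uniqueness of the (co)module structure on $B$ over itself (Proposition \ref{prop:idcoisotropic}) to collapse the resulting fiber squares. The only difference is organizational: the paper introduces an intermediate space $F$ (the fiber of bimodule categories at $(A,B,B)$) and runs the two fiber-square computations sequentially, whereas you take fibers of both categorical squares at once.
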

\begin{proof}
In the proof we will repeatedly use the statements of Poisson additivity given by Theorems \ref{thm:poissonadditivity} and \ref{thm:relpoissonadditivity}.

If $\cC$ is a symmetric monoidal $\infty$-category, we have a Cartesian square of $\infty$-categories
\begin{equation}
\xymatrix{
\bimod{}{}(\cC) \ar[r] \ar[d] & \balg(\cC) \times \balg(\cC) \ar[d] \\
\blmod(\cC) \ar[r] & \balg(\cC),
}
\label{eq:graph1}
\end{equation}
where the functor $\balg(\cC)\times\balg(\cC)\rightarrow \balg(\cC)$ is given by sending $(A, B)\mapsto A\otimes B^{op}$.

Let $F$ be the fiber of
\[\bimod{}{}(\balg_{\bP_n}(\cM))^\sim\rightarrow \bimod{}{}(\bCAlg(\cM))^\sim\]
at $(A, B, B)$. Taking the fiber of \eqref{eq:graph1} applied to the forgetful functor
\[(\cC_1=\balg_{\bP_n}(\cM))\longrightarrow (\cC_2=\bCAlg(\cM))\]
we get a Cartesian square of spaces
\[
\xymatrix{
F \ar[r] \ar[d] & \Pois(A, n)\times \Pois(B, n) \ar^{\id\times \opp}[d] \\
\Cois(g, n) \ar[r] & \Pois(A\otimes B, n),
}
\]
so we have to show that $F\cong \Pois(f, n)$.

Now consider the Cartesian square
\begin{equation}
\xymatrix{
\Arr(\balg(\cC)) \ar[r] \ar^{\mathrm{target}}[d] & \bimod{}{}(\cC) \ar[d] \\
\balg(\cC) \ar[r] & \brmod(\cC).
}
\label{eq:graph2}
\end{equation}
given by Lemma \ref{lm:morphismbimodule}. Taking the fiber of \eqref{eq:graph2} applied to the forgetful functor
\[(\cC_1=\balg_{\bP_n}(\cM))\longrightarrow (\cC_2=\bCAlg(\cM))\]
we get a Cartesian square of spaces
\[
\xymatrix{
\Pois(f, n) \ar[r] \ar[d] & F \ar[d] \\
\Pois(B, n) \ar[r] & \Cois(\id, n)
}
\]
The bottom morphism is induced by the functor
\[\balg(\cC)\rightarrow \blmod(\cC)\]
splitting the obvious forgetful functor
\[\blmod(\cC)\rightarrow \balg(\cC)\]
and hence $\Pois(B, n)\rightarrow \Cois(\id, n)$ is a weak equivalence by Proposition \ref{prop:idcoisotropic}. Therefore, $\Pois(f, n)\rightarrow F$ is also a weak equivalence which proves the claim.
\end{proof}
In particular, the fibers of the horizontal morphisms of the Cartesian diagram of Theorem \ref{thm:graphcoisotropic} are thus equivalent. It follows that given two Poisson structures $\pi_A$ and $\pi_B$ on $A$ and $B$ respectively, lifting a map of algebras $f : A \to B$ to a Poisson map is equivalent to giving a coisotropic structure on the graph $g : A \otimes B \to B$, where $A \otimes B$ is endowed with the $\bP_{n+1}$-structure $(\pi_A;-\pi_B)$.

\end{document}